\DeclareMathOperator*{\argmin}{arg\,min}
\definecolor{dgreen}{rgb}{0,0.6,0}
\definecolor{dbrown}{rgb}{0.45,0.25,0}
\newcommand{\Scal}{\mathcal{S}}
\newcommand{\Tcal}{\mathcal{T}}
\newcommand{\Pcal}{\mathcal{P}}
\newcommand{\Ical}{\mathcal{I}}
\newcommand{\Ibld}{{\mbox{\bf I}}}
\newcommand{\onebld}{{\mbox{\bf 1}}}
\newcommand{\wt}{\widetilde}
\newcommand{\wh}{\widehat}
\newcommand{\ol}{\overline}
\newcommand{\gap}{\vspace{0.1in}}
\newcommand{\ppm}{\wh p}
\newtheorem{theorem}{Theorem}[section] 
\newtheorem{lemma}{Lemma}[section] 
\newtheorem{corollary}{Corollary}[section] 
\newtheorem{proposition}{Proposition}[section] 
\newtheorem{remark}{Remark}[section]
\begin{document}

\title{Uniform Lipschitz Property of Nonnegative Derivative Constrained B-Splines  and Applications to Shape Constrained Estimation}

\author{Teresa \,  M. \,   Lebair
%
%
\ \ \ and \ \ \
Jinglai \, Shen\footnote{Department of Mathematics and Statistics, University of Maryland, Baltimore County, Baltimore, MD 21250, U.S.A. Email:
\{teresa.lebair, shenj\}@umbc.edu. Phone: 410-455-2402; Fax: 410-455-1066. } }
%
%

\date{\today}

\maketitle

\vspace{-0.21in}

\begin{abstract}
Inspired by shape constrained estimation under general nonnegative derivative constraints, this paper considers the B-spline approximation of constrained functions and studies the asymptotic performance of the constrained B-spline estimator.
%
%
By invoking a deep result in B-spline theory (known as de Boor's conjecture) first proved by
A. Shardin as well as other new analytic techniques, we establish a critical uniform Lipschitz property of the B-spline estimator subject to arbitrary nonnegative derivative constraints under the $\ell_\infty$-norm with possibly non-equally spaced design points and knots. This property leads to important asymptotic analysis results of the B-spline estimator, e.g., the uniform convergence and consistency on the entire interval under consideration. The results developed in this paper not only recover the well-studied monotone and convex approximation and estimation as special cases, but also treat general nonnegative derivative constraints in a unified framework and open the door for the constrained B-spline approximation and estimation
%
%
%
subject to a broader class of shape constraints.
\gap

{\it Key words}:  shape constrained estimation, B-splines, asymptotic analysis.

{\it MSC classifications}: 41A15, 62G05, 62G08, 62G20, 65D07.
\end{abstract}

\vspace{-0.18in}

%
\section{Introduction}

Shape constrained estimation is concerned with the nonparametric estimation of an underlying function subject to a pre-specified shape condition, e.g., the monotone or convex condition, such that
a constructed estimator preserves the given shape condition. This problem has garnered substantial attention from approximation and estimation theory, and nonparametric statistics, due to the vast number of applications; see \cite{DeVoreL_book93, EgMartin_book10, Pal_SPL08, Sanyal_CDC03, ShenLebair_Auto15, SWang_ACC10} and the references therein.

A shape constrained estimator is subject to nonsmooth inequality constraints, which yield many challenges in its characterization and performance analysis. Two lines of research have been carried out for shape constrained estimation. The first line pertains to estimator characterization and numerical computation.
Various shape constrained estimators are characterized by constrained smoothing splines, which can be treated as constrained control theoretic splines \cite{EgMartin_book10, SunEgerMartin_TAC00} and formulated as inequality constrained optimal control problems. Nonsmooth analytic or optimization techniques have been used to characterize and compute constrained estimators \cite{CheeWang_CSDA14, EgMartin_book10, Dontchev_siopt02, Dontchev_constapprox03, PappAlizadeh_JCGS14, ShenLebair_Auto15}.
%
%
However, all these papers (along the first line) do not take performance analysis into account.
The second research line is concerned with the asymptotic performance analysis of (univariate) shape constrained estimators. The current literature along this line focuses mostly on monotone estimation, e.g., \cite{Cator_B11, Pal_SPL08, PalWoodroofe_SS07, ShenWang_SICON11, WangShen_biometrika10}, and convex estimation, e.g., \cite{BirkeDette_SJoS06, Dumbgen_MMS04, Groeneboom_AoS01, ShenWang_ACC12, WangShen_SICON13}; additional results include $k$-monotone estimation \cite{Balabdaoui_SN10}. These papers study the performance of certain constrained estimators, e.g., the least-squares, B-spline, and P-spline estimators, for sufficiently large sample size
%
%
%
\cite{Nemirovski_notes00, Tsybakov_book10}.

It is observed that the monotone (resp. convex) constraint on a univariate function roughly corresponds to the first (resp. second) order nonnegative derivative constraint, under suitable smoothness conditions on the underlying function. Despite extensive research on asymptotic analysis of monotone and convex estimation,
very few performance analysis results are available for higher-order nonnegative derivative constraints, although such constraints arise in applications  \cite{Sanyal_CDC03}. Motivated by these applications and the lack of performance analysis of the associated constrained estimation, we consider the estimation of a univariate function subject to the $m$th order nonnegative derivative constraint via B-spline estimators, where $m \in \mathbb N$ is arbitrary. The B-splines are a popular tool in approximation and estimation theory thanks to their numerical advantages \cite{deBoor_book01, DeVoreL_book93}. Nonnegative derivative  constraints on a B-spline estimator can be easily imposed on spline coefficients, which can be efficiently computed  via quadratic programs. In spite of this numerical efficiency and simplicity, the asymptotic analysis of constrained B-spline estimators is far from trivial, particularly when uniform convergence and the supremum-norm (or sup-norm) risk are considered.

The asymptotic analysis of constrained B-spline estimators requires a deep understanding of the mapping from a (weighted) sample data vector to the corresponding B-spline coefficient vector. For a fixed sample size, this mapping is given by a Lipschitz piecewise linear function due to the inequality shape constraints. As the sample size increases and tends to infinity, it gives rise to an infinite family of size-varying piecewise linear functions. A critical {\em uniform Lipschitz property} has been established for monotone P-splines (corresponding to $m=1$) \cite{ShenWang_SICON11}  and convex B-splines  (corresponding to $m=2$) \cite{WangShen_SICON13}. This property states that the size-varying piecewise linear functions attain a uniform Lipschitz constant under the $\ell_\infty$-norm, regardless of sample size and the number of knots, and it leads to many important results in asymptotic analysis, e.g.,
uniform convergence, pointwise mean squared risk, and optimal rates of convergence  \cite{WangShen_SICON13}. It has been conjectured that this property can be extended to B-spline estimators subject to higher-order nonnegative derivative constraints \cite{WangShen_SICON13}. However, the extension  encounters a major difficulty:
the proof of the uniform Lipschitz property for the monotone and convex cases heavily relies on the diagonal dominance of certain matrices that  no longer holds in the higher-order case. In addition, the results in \cite{ShenWang_SICON11, WangShen_SICON13} are based on a restrictive assumption of equally spaced design points and knots,
%
%
but the extension to non-equally spaced case is nontrivial. To overcome these difficulties, we
%
%
develop various new results for the proof of the
uniform Lipschitz property and asymptotic analysis of the B-spline estimators for an arbitrary $m$. We summarize these results and contributions of the paper as follows.

(1) A novel technique for the proof of the uniform Lipschitz property lies in a deep result in B-spline theory (dubbed de Boor's conjecture) first proved by A. Shardin \cite{Shardin_AM01}; see \cite{Golitschek_JoAT14} for a recent simpler proof. Informally speaking, this result says that the $\ell_\infty$-norm of the inverse of the Gramian formed by the normalized B-splines of order $m$ is uniformly bounded, regardless of the knot sequence and the number of B-splines (cf. Theorem~\ref{thm:Shardin_bound} in Section~\ref{sect:proof_roadmap}). Sparked by this result, we construct (nontrivial) coefficient matrices of the piecewise linear functions and approximate these matrices by suitable B-splines via analytic techniques. This yields the uniform bounds in the $\ell_\infty$-norm for arbitrary $m$ and possibly non-equally spaced design points and knots; see Theorem~\ref{thm:uniform_Lip}.

(2) Using the uniform Lipschitz property, we show that for any order $m$, the constrained B-spline estimator achieves uniform convergence and consistency on the entire interval of interest even when design points are not equally spaced. Furthermore, we develop a preliminary convergence rate of the B-spline estimator in the sup-norm; this rate sheds light on the optimal convergence and minimax risk analysis of the B-spline estimators subject to general nonnegative derivative constraints.


The paper is organized as follows.
%
%
In Section~\ref{sec:prob_form}, we introduce the constrained B-spline estimator and state the uniform Lipschitz property. Section~\ref{sect:proof_unif_Lip} is devoted to the proof of the uniform Lipschitz property. Section~\ref{sect:constrained_estimation} establishes the uniform bounds of the bias and stochastic error of the constrained B-spline estimator via the uniform Lipschitz property. Concluding remarks are made in Section~\ref{sect:conclusion}.
%

%
%

{\it Notation}.
We introduce some notation used in the paper.
 Define the function $\delta_{ij}$ on $\mathbb N \times \mathbb N$ so that $\delta_{ij} = 1$ if $i = j$, and $\delta_{ij} = 0$ otherwise.
 Let $\Ibld_S$ denote the indicator function for a set $S$.
For an index set $\alpha$, let $\ol\alpha$ denote its complement, and $|\alpha|$ denote its cardinality.  In addition, for $k \in \mathbb N$, define the set $\alpha+k:= \{i+k \,:\, i \in \alpha \}$.  Let $\onebld_k \in \mathbb R^k$ denote the column vector of all ones and $\onebld_{k_1\times k_2}$ denote the $k_1\times k_2$ matrix of all ones.
For a column vector $v \in \mathbb R^p$, let $v_i$ denote its $i$th component.  For a matrix $A \in \mathbb R^{k_1 \times k_2}$, let $[A]_{ij}$ or $[A]_{i,j}$ be its $(i, j)$-entry, let $(A)_{i\bullet}$ be its $i$th row, and $(A)_{\bullet j}$ be its $j$th column. If $i_1 \leq i_2$ and $j_1 \leq j_2$, let $(A)_{i_1:i_2,\bullet}$ be the submatrix of $A$ formed by its $i_1$th to $i_2$th rows, let $(A)_{\bullet,j_1:j_2}$ denote the submatrix of $A$ formed by its $j_1$th to $j_2$th columns, and let $(A)_{i_1:i_2,j_1:j_2}$ denote the submatrix of $A$ formed by its $i_1$th to $i_2$th rows and $j_1$th to $j_2$th columns. Given an index set $\alpha$, let $v_{\alpha}\in \mathbb R^{|\alpha|}$ denote the vector formed by the components of $v$ indexed by elements of $\alpha$, and $(A)_{\alpha\bullet}$ denote the matrix formed by the rows of $A$ indexed by elements of $\alpha$.

%
%
\section{Shape Constrained B-splines: Uniform Lipschitz Property} \label{sec:prob_form}

Fix $m \in \mathbb N$. Consider the class of (generalized) shape constrained univariate functions on $[0, 1]$:
\begin{eqnarray}
\Scal_m & := \, \left\{ f:[0,1] \rightarrow \mathbb R \ \Big | \ \mbox{ the $(m-1)$th derivative } f^{(m-1)} \mbox{ exists almost everywhere on $[0, 1]$,  } \right. \notag \\
&  \quad \ \mbox{ and }
 \left. \big ( f^{(m-1)}(x_1) - f^{(m-1)}(x_2) \big) \cdot \big(x_1-x_2 \big) \geq 0 \ \mbox{ when $f^{(m-1)}(x_1), f^{(m-1)}(x_2)$ exist}
\right \}.  \label{eqn:shape_class}
\end{eqnarray}
When $m=1$, $\mathcal S_m$ represents the set of increasing functions on $[0, 1]$. Similarly, when $m=2$, $\mathcal S_m$ denotes the set of continuous convex functions on $[0, 1]$.

This paper focuses on the B-spline approximation of functions in $\mathcal S_m$. Toward this end,
we provide a brief review of B-splines as follows; see \cite{deBoor_book01} for more details. For a given $K \in \mathbb N$, let $T_\kappa:= \{\kappa_0 < \kappa_1 < \cdots < \kappa_K \}$ be a sequence of $(K+1)$ knots in $\mathbb R$.  Given $p \in \mathbb N$, let $\{B^{T_\kappa}_{p,k}\}_{k=1}^{K+p-1}$ denote the $(K+p-1)$ B-splines of order $p$ (or equivalently degree $(p-1)$) with knots at $\kappa_0, \kappa_1,\ldots, \kappa_K$, and the usual extension $\kappa_{1-p} = \cdots = \kappa_{-1} = \kappa_0$ on the left and $\kappa_{K+1} = \cdots = \kappa_{K+p-1} = \kappa_K$ on the right,
scaled such that $\sum_{k=1}^{K+p-1}  B^{T_\kappa}_{p,k}(x) = 1$ for any $x \in [\kappa_0, \kappa_K]$. The support of $B^{T_\kappa}_{p,k}$ is given by (i) $[\kappa_{k-p}, \kappa_k)$ when $p=1$ and $1\le k \le K-1$;  (ii) $[\kappa_{k-p}, \kappa_k]$ when $p=1$ and $k=K$ or for each $k=1, \ldots, K+p-1$ when $p \ge 2$. We summarize some properties of the B-splines to be used in the subsequent development below:
\begin{itemize}
  \item [(i)] Nonnegativity, upper bound, and partition of unity: for each $p$, $k$, and $T_\kappa$, $0 \le B^{T_\kappa}_{p,k}(x) \le 1$ for any $x\in [\kappa_0, \kappa_K]$, and $\sum_{k=1}^{K+p-1}  B^{T_\kappa}_{p,k}(x) = 1$ for any $x \in [\kappa_0, \kappa_K]$.
  \item [(ii)] Continuity and differentiability: when $p=1$, each $B^{T_\kappa}_{p, k}(x)$ is a (discontinuous) piecewise constant function given by 
      $\mathbf I_{[\kappa_{k-1}, \kappa_k)}(x)$ for $1\le k \le K_n-1$ or  $\mathbf I_{[\kappa_{K_n-1}, \kappa_{K_n}]}(x)$ for $k=K_n$, also $B^{T_\kappa}_{p, k}(x) \cdot B^{T_\kappa}_{p, j}(x)=0, \forall \ x$ if $k\ne j$;
  when   $p=2$, the $B^{T_\kappa}_{p, k}$'s are continuous piecewise linear splines, and there are at most three points in $\mathbb R$ where each  $B^{T_\kappa}_{p,k}$ is not differentiable; when $p>2$, each $B^{T_\kappa}_{p,k}$ is differentiable on $\mathbb R$. For $p\ge 2$, the derivative of $B^{T_\kappa}_{p,k}$ (when it exists) is
\begin{equation} \label{eqn:B_spline_derivative}
  \Big( B^{T_\kappa}_{p,k}(x) \Big)' \ = \ \frac{p-1}{\kappa_{k-1} - \kappa_{k-p} } B^{T_\kappa}_{p-1, k-1}(x) \, - \,  \frac{p-1}{\kappa_{k} - \kappa_{k-p+1} } B^{T_\kappa}_{p-1, k}(x),
\end{equation}
where we define $ \frac{p-1}{\kappa_{k} - \kappa_{k-p+1} } B^{T_\kappa}_{p-1, k}(x):=0, \forall \, x \in [0, 1]$ for $k=0$ and $k=K+p-1$.

  \item [(iii)] $L_1$-norm: for each $k$, the $L_1$-norm of $B^{T_\kappa}_{p,k}$ is known to be \cite[Chapter IX, eqns.(5) and (7)]{deBoor_book01}
\begin{equation} \label{eqn:L1_norm_Bspline}
    \left \| B^{T_\kappa}_{p,k} \right \|_{L_1} \, := \, \int_{\mathbb R} \left |  B^{T_\kappa}_{p,k}(x) \right  | dx = \frac{\kappa_k - \kappa_{k-p} }{p}.
\end{equation}
\end{itemize}

Let $T_\kappa := \{0=\kappa_0 < \kappa_1 < \dots < \kappa_{K_n}=1 \}$ be a given sequence of $(K_n+1)$ knots in $[0, 1]$, and let $g_{b, T_\kappa}:[0, 1]\rightarrow \mathbb R$ be such that $g_{b, T_\kappa}(x) = \sum^{K_n+m-1}_{k=1} b_k B^{T_\kappa}_{m, k}(x)$, where the $b_k$'s are real coefficients of B-splines and $b:=(b_1, \ldots, b_{K_n+m-1})^T$ is the spline coefficient vector. Here the subscript $n$ in $K_n$ corresponds to the number of design points to be used in the subsequent sections.

To derive a necessary and sufficient condition for $g_{b, T_\kappa}\in \mathcal S_m$, we introduce the following matrices.
Let $D^{(k)} \in \mathbb{R}^{k\times(k+1)}$ denote the first order difference matrix, i.e.,
\[
D^{(k)} \, := \,
\begin{bmatrix}
-1 & 1 & &  &     &      \\
  & -1  &1& &     &      \\

     & &\ddots  &\ddots  & &\\
     & &  & \ddots&\ddots& \\

    &  &  &&-1 &1 \\
\end{bmatrix} \in \mathbb R^{k \times (k+1)}.
\]
When $m=1$, let $\wt D_{m, T_\kappa} := D^{(K_n-1)}$. In what follows, consider $m>1$. For the given knot sequence $T_\kappa$ with the usual extension $\kappa_k = 0$ for any $k < 0$ and $\kappa_k  = 1$ for any $k > K_n$, define the following diagonal matrices: $\Delta_{0, T_\kappa} := I_{K_n-1}$, and for each $p =1,\ldots,m-1$,
\begin{equation} \label{eqn:matrix_Delta}
   \Delta_{p, T_\kappa} \, := \, \frac{1}{p} \, \mbox{diag} \Big( \kappa_1 - \kappa_{1-p}, \ \kappa_2 - \kappa_{2-p}, \ \ldots, \ \kappa_{K_n+p-1} - \kappa_{K_n-1} \Big) \in \mathbb R^{(K_n+p-1)\times (K_n+p-1)}.
\end{equation}
Furthermore, define the matrices $\wt D_{p, T_\kappa} \in \mathbb R^{(K_n+m-1-p)\times(K_n+m-1)}$ inductively as:
\begin{equation} \label{eqn:matrix_wtD}
   \wt D_{0, T_\kappa} \, := \, I, \quad \mbox{ and } \quad \wt D_{p, T_\kappa} \, := \,  \Delta^{-1}_{m-p, T_\kappa} \cdot D^{(K_n+m-1-p)} \cdot \wt D_{p-1, T_\kappa}, \quad p=1, \ldots, m.
\end{equation}
Roughly speaking, $\wt D_{p, T_\kappa}$ denotes the $p\,$th order difference matrix weighted by the knots of $T_\kappa$. When the knots are equally spaced, it is almost identical to a standard difference matrix (except on the boundary). Moreover, since $\Delta^{-1}_{m-p, T_\kappa}$ is invertible and $D^{(K_n+m-1-p)}$ has full row rank, it can be shown via induction that $\wt D_{p, T_\kappa}$ is of full row rank for any $p$ and $T_\kappa$.

In what follows, define $T_n:=K_n + m -1$ for a fixed spline order $m \in \mathbb N$.

%
\begin{lemma}\label{lem:shape_ineq}
Fix $m \in \mathbb N$. Let $T_\kappa$ be a given  sequence of $(K_n+1)$ knots, and for each $p=1, \ldots, m$, let  $\{B^{T_\kappa}_{p,k}\}_{k=1}^{K_n+p-1}$ denote the B-splines of order $p$ defined by $T_\kappa$. Then the following hold:
\begin{itemize}
  \item [(1)] For any given $b\in \mathbb R^{T_n}$ and $j=0, 1, \ldots, m-1$, the $j$th derivative of $g_{b, T_\kappa}:=\sum^{T_n}_{k=1} b_k B^{T_\kappa}_{m, k}$ is $\sum^{T_n-j}_{k=1} \big( \wt D_{j, T_\kappa} b)_k  B^{T_\kappa}_{m-j, k}$, except at (at most) finitely many points on $[0, 1]$;
  \item [(2)] $ g_{b, T_\kappa} \in \mathcal S_m$ if and only if $\wt D_{m, T_\kappa} \, b \ge 0$.
\end{itemize}
\end{lemma}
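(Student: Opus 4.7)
The plan is to establish both claims by exploiting the derivative recursion \eqref{eqn:B_spline_derivative} together with the inductive structure built into the definitions of $\Delta_{p, T_\kappa}$ and $\wt D_{p, T_\kappa}$.

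For part (1), I would prove by induction on $j$ that
\[
   g^{(j)}_{b, T_\kappa}(x) \; = \; \sum_{k=1}^{T_n - j} (\wt D_{j, T_\kappa} \, b)_k \, B^{T_\kappa}_{m-j, k}(x)
\]
on the complement of a finite set (namely the knots where the involved low-order B-splines fail to be differentiable). The base case $j = 0$ is immediate from $\wt D_{0, T_\kappa} = I$. Assuming the formula at step $j-1$, I would differentiate term-by-term using \eqref{eqn:B_spline_derivative} with $p = m - j + 1$, reindex the shifted sum via $k \mapsto k+1$, and combine. The boundary convention in \eqref{eqn:B_spline_derivative} that kills the $k=0$ and $k = T_n - j + 1$ terms trims the sum to $k = 1, \dots, T_n - j$, producing the coefficient
\[
   \frac{m-j}{\kappa_k - \kappa_{k+j-m}} \, \bigl[(\wt D_{j-1, T_\kappa} \, b)_{k+1} - (\wt D_{j-1, T_\kappa} \, b)_k \bigr].
\]
A direct comparison with the recursion $\wt D_{j, T_\kappa} = \Delta^{-1}_{m-j, T_\kappa} \cdot D^{(T_n - j)} \cdot \wt D_{j-1, T_\kappa}$, together with the explicit diagonal entries in \eqref{eqn:matrix_Delta}, identifies this factor with $(\wt D_{j, T_\kappa} \, b)_k$, closing the induction.

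For part (2), I would specialize (1) to $j = m - 1$, yielding
\[
   g^{(m-1)}_{b, T_\kappa}(x) \; = \; \sum_{k=1}^{K_n} (\wt D_{m-1, T_\kappa} \, b)_k \, B^{T_\kappa}_{1, k}(x)
\]
off the finite set of knots at which the order-one B-splines jump. Since the $B^{T_\kappa}_{1, k}$ have pairwise disjoint half-open supports covering $[0,1]$, the function $g^{(m-1)}_{b, T_\kappa}$ is piecewise constant with value $(\wt D_{m-1, T_\kappa} \, b)_k$ on the $k$th piece. The membership $g_{b, T_\kappa} \in \mathcal S_m$ demands that $g^{(m-1)}_{b, T_\kappa}$ be non-decreasing wherever it exists; for a piecewise constant function this holds if and only if $(\wt D_{m-1, T_\kappa} \, b)_{k+1} \ge (\wt D_{m-1, T_\kappa} \, b)_k$ for every $k = 1, \dots, K_n - 1$, i.e.\ $D^{(K_n-1)} \wt D_{m-1, T_\kappa} \, b \ge 0$. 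Because $\Delta_{0, T_\kappa} = I_{K_n - 1}$, the recursion \eqref{eqn:matrix_wtD} at $p = m$ collapses to $\wt D_{m, T_\kappa} = D^{(K_n - 1)} \wt D_{m-1, T_\kappa}$, and the condition becomes $\wt D_{m, T_\kappa} \, b \ge 0$. The edge case $m = 1$ follows by applying the same argument directly to $g_{b, T_\kappa}$ in place of its $(m-1)$th derivative.

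The substantive work is purely algebraic: no estimate or analytic bound is used. The only delicate aspect is bookkeeping --- verifying that the index shift in the differentiation formula, the two boundary zeros, and the specific form of $\Delta_{m-j, T_\kappa}$ all line up so that the inductive coefficient matches $(\wt D_{j, T_\kappa} \, b)_k$ exactly. If that accounting is carried out carefully, both statements follow at once.
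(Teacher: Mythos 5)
Your proposal is correct and follows essentially the same route as the paper's proof: induction on $j$ via the derivative recursion (\ref{eqn:B_spline_derivative}) for part (1), then reading off monotonicity of the piecewise constant $g^{(m-1)}$ from its first-order B-spline coefficients for part (2). The index bookkeeping you flag as the delicate step is exactly what the paper's computation carries out, and it does line up as you describe.
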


%
\begin{proof}For notational simplicity, we write $g_{b, T_\kappa}$ as $g$ and $\wt D_{j, T_\kappa}$ as $\wt D_j$ respectively in the proof.

(1)  We prove statement (1) by induction on $j=0,1,\dots,m-1$. Clearly, the statement holds for $j = 0$.  Consider $j$ with $1\le j \le m-1$, and assume the statement holds for $(j-1)$.
 It follows from (\ref{eqn:B_spline_derivative}),  the induction hypothesis, and the definitions of $\Delta_{j, T_\kappa}$ and $\wt D_{j}$ that
\begin{eqnarray*}
g^{(j)} &= & \left(g^{(j-1)} \right)^\prime  = \left(\sum_{k=1}^{T_n-j+1} \big(\wt D_{j-1} b \big)_k B^{T_\kappa}_{m-j+1,k}  \right)^\prime =  (m-j) \sum_{k=1}^{T_n-j} \frac{(\wt D_{j-1}b)_{k+1}-(\wt D_{j-1}b)_k}{\kappa_{k} - \kappa_{k-m+j}} B^{T_\kappa}_{m-j,k}\\
 &= & \sum_{k=1}^{T_n-j} \left (\Delta_{m-j, T_\kappa}^{-1} D^{(T_n-j)} \wt D_{j-1} b \right)_k \,B^{T_\kappa}_{m-j,k} \ = \ \sum_{k=1}^{T_n-j} \big(\wt D_j b \big)_k \,B^{T_\kappa}_{m-j,k},
\end{eqnarray*}
whenever $g^{(j)}$ and $g^{(j-1)}$ exist. Hence, statement (1) holds for $j$.

(2) It is easily seen that $g^{(m-1)}$ exists on $[0, 1]$  except at (at most) finitely many points in $[0, 1]$.  It thus follows from statement (1) that $g^{(m-1)}$ is a piecewise constant function on $[0, 1]$. Therefore, $g \in \mathcal S_m$ if and only if the spline coefficients of
 $g^{(m-1)}$ are increasing, i.e., $(\wt D_{m-1} b)_k \leq (\wt D_{m-1} b)_{k+1}$ for each $k =1,\ldots,K_n-1$. This is equivalent to $D^{(K_n-1)} \wt D_{m-1} b \geq 0$, which is further equivalent to $ \wt D_m \, b \ge 0$, in view of $\Delta_{0, T_\kappa}=I$ and $\wt D_m= D^{(K_n-1)} \wt D_{m-1}$. This gives rise to statement (2).
\end{proof}

%
\subsection{Shape constrained B-splines}  \label{subsec:shape_const_Bsplines}

Let $m\in \mathbb N$ be a fixed spline order throughout the rest of the paper.  Let $y := (y_0,y_1,\dots,y_n)^T \in \mathbb R^{(n+1)}$ be a given sample sequence corresponding to a sequence of  design points $P=(x_i)^n_{i=0}$ on $[0, 1]$. For a given sequence $T_\kappa$ of $(K_n+1)$ knots on $[0, 1]$, consider the following B-spline estimator that satisfies the shape constraint characterized by $\mathcal S_m$:
\begin{equation} \label{eqn:Bspline_estimator}
    \wh {f}^B_{P, T_\kappa} (x) \, := \, \sum_{k=1}^{K_n+m-1} \wh b_k B^{T_\kappa}_{m, k}(x),
\end{equation}
where the coefficient vector $\wh b_{P, T_\kappa} :=(\wh b_k)$ is given by the constrained quadratic optimization problem:
%
%
\begin{equation}\label{eqn:opt_spline_coeff}
\wh b_{P, T_\kappa} \, := \, \argmin_{\wt D_{m, T_\kappa} b \geq 0} \sum^n_{i=0} \big(x_{i+1} - x_i \big) \left( y_i- \sum_{k=1}^{T_n} b_k B^{T_\kappa}_{m, k}(x_i) \right)^2.
\end{equation}
Here $x_{n+1}:=1$. It follows from Lemma~\ref{lem:shape_ineq} that $\wh f^B_{P, T_\kappa} \in \mathcal S_m$. Note that $\wh f^B_{P, T_\kappa}$ depends on $P$ and $T_\kappa$.

Define the diagonal matrix $\Theta_n := \text{diag}(x_1-x_0,\, x_2-x_1,\ldots, x_{n+1} - x_n) \in \mathbb R^{(n+1)\times (n+1)}$, the design matrix $\wh X \in \mathbb R^{(n+1) \times T_n}$ with $[\wh X]_{i, \, k} := B^{T_\kappa}_{m, k}(x_i)$ for each  $i$ and $k$, the matrix $\Lambda_{K_n, P, T_\kappa} := K_n \cdot \wh X^T \Theta_n \wh X \in \mathbb R^{T_n\times T_n}$, and the weighted sample vector $\ol{y} := K_n \cdot \wh X^T\Theta_n y$. Therefore, the quadratic optimization problem in (\ref{eqn:opt_spline_coeff}) for $\wh b_{P, T_\kappa}$ can be written as:
\begin{equation} \label{eqn:opt_spline_coeff_matrix}
  \wh b_{P, T_\kappa}(\bar y) \, := \, \argmin_{\wt D_{m, T_\kappa} \, b \geq 0} \frac{1}{2} b^T \, \Lambda_{K_n, P, T_\kappa} \, b - b^T \ol y.
\end{equation}

For the given $P, T_\kappa$ and $K_n$, the matrix $\Lambda_{K_n, P, T_\kappa}$ is positive definite, and the function $\wh b_{P, T_\kappa}:\mathbb R^{T_n} \rightarrow \mathbb R^{T_n}$ is thus piecewise linear and globally Lipschitz continuous \cite{Scholtes_thesis94}.
The piecewise linear formulation of $\wh b_{P, T_\kappa}$ can be obtained from the KKT optimality conditions for (\ref{eqn:opt_spline_coeff_matrix}):
\begin{equation}\label{eqn:spline_KKT}
\Lambda_{K_n, P, T_\kappa} \, \wh {b}_{P, T_\kappa} - \ol y - \wt D_{m, T_\kappa}^T \, \chi \, = \, 0, \qquad 0 \, \leq \, \chi \, \perp \, \wt D_{m, T_\kappa} \, \wh{b}_{P, T_\kappa} \, \geq \, 0,
\end{equation}
where $\chi \in \mathbb{R}^{K_n-1}$ is the Lagrange multiplier, and $u \perp v$ means that the vectors $u$ and $v$ are orthogonal. It follows from a similar argument as in  \cite{ShenWang_SICON11, ShenWang_ACC12, WangShen_SICON13} that each linear piece of $\wh b_{P, T_\kappa}$ is characterized by index sets:
\begin{equation} \label{eqn:index_alpha}
 \alpha  \, := \, \left\{ \, i \,:\, \big(\wt D_{m, T_\kappa} \, \wh {b}_{P, T_\kappa} \big)_i = 0 \, \right\} \, \subseteq \{1,\ldots,K_n-1\}.
\end{equation}
Note that $\alpha$ may be the empty set.
For each $\alpha$, the KKT conditions (\ref{eqn:spline_KKT}) become
%
%
\[
    (\wt D_{m, T_\kappa})_{\alpha \bullet} \, \wh{b}_{P, T_\kappa}= 0, \qquad \chi_{\ol{\alpha}}=0, \qquad \Lambda_{K_n, P, T_\kappa}\, \wh{b}_{P, T_\kappa} - \ol y - \big((\wt D_{m, T_\kappa})_{\alpha \bullet} \big)^T \, \chi_\alpha = 0.
\]
%
%
Denote the linear piece of $\wh b_{P, T_\kappa}$ for a given $\alpha$ as $\wh b^{\, \alpha}_{P, T_\kappa}$, and let $F_\alpha^T \in \mathbb R^{T_n\times(|\ol\alpha|+m)}$ be a matrix whose columns form a basis for the null space of $(\wt D_{m, T_\kappa})_{\alpha \bullet}$.  It is known \cite{ShenWang_SICON11, ShenWang_ACC12, WangShen_SICON13} that
\begin{equation}\label{eqn:piecewise_linear}
    \wh b^{\,\alpha}_{P, T_\kappa} (\ol y) \, = \, F_\alpha^T \big(F_\alpha \Lambda_{K_n,P, T_\kappa} \, F_\alpha^T \big)^{-1}F_\alpha \,\ol y.
\end{equation}
Note that for any invertible matrix $R \in \mathbb{R}^{(|\ol{\alpha}|+m) \times(|\ol{\alpha}|+m)}$,
$
(R F_\alpha)^T((R F_\alpha) \Lambda_{K_n, P, T_\kappa} (R F_\alpha)^T)^{-1} (R F_\alpha)
=  F_\alpha^T(F_\alpha \Lambda_{K_n, P, T_\kappa} F_\alpha^T)^{-1}F_\alpha.
$
Thus any choice of $F_\alpha$ leads to the same $\wh b^\alpha_{P, T_\kappa}$, provided that the columns of $F_\alpha^T$ form a basis of the null space of $(\wt D_{m, T_\kappa})_{\alpha \bullet}$.
%

%
\subsection{Uniform Lipschitz property of shape constrained B-splines: main result}

As indicated in the previous subsection, the piecewise linear function $\wh b_{P, T_\kappa}(\cdot)$ is Lipschitz continuous for fixed $K_n, P, T_\kappa$. An important question is whether the Lipschitz constants of size-varying $\wh b_{P, T_\kappa}$ with respect to the $\ell_\infty$-norm are uniformly bounded, regardless of $K_n$, $P$, and $T_\kappa$, as long as the numbers of design points and knots are sufficiently large. If so, we say that $\wh b_{P, T_\kappa}$ satisfies the {\em uniform Lipschitz property}. Originally introduced and studied in \cite{ShenWang_SICON11, ShenWang_ACC12, WangShen_biometrika10, WangShen_SICON13} for monotone P-splines and convex B-splines with equally spaced design points and knots, this property is shown to play a crucial role in the uniform convergence and asymptotic analysis of constrained B-spline estimators. In this section, we extend this property to constrained B-splines subject to general nonnegative derivative constraints under relaxed conditions on design points and knots.

Fix $c_\omega \ge 1$, and for each $n \in \mathbb N$,  define the set of sequences of $(n+1)$ design points on $[0, 1]$:
\begin{equation} \label{eqn:set_of_Pn}
   \mathcal P_n \, := \, \left\{ \,  (x_i)^n_{i=0} \  \Big | \  0 = x_0 < x_1 < \dots < x_n = 1, \ \ \mbox{ and } \ \ x_{i} - x_{i-1} \le \frac{c_\omega}{n}, \ \forall \ i=1, \ldots, n \, \right\}.
\end{equation}
Furthermore, let $c_{\kappa, 1}$ and $c_{\kappa, 2}$ with $0<c_{\kappa, 1} \le 1 \le c_{\kappa, 2}$ be given. For each $K_n \in \mathbb N$, define the set of sequences of $(K_n+1)$ knots on $[0, 1]$ with the usual extension on the left and right boundary:
\begin{equation}  \label{eqn:set_of_T_K_n}
  \mathcal T_{K_n} := \left\{  (\kappa_i)^{K_n}_{i=0} \,  \Big | \,  0=\kappa_0 < \kappa_1 < \cdots < \kappa_{K_n}=1, \mbox{ and } \ \frac{c_{\kappa, 1}}{K_n} \le \kappa_{i} - \kappa_{i-1} \le \frac{c_{\kappa, 2}}{K_n}, \ \forall \, i=1, \ldots, K_n  \right\}.
\end{equation}
For any  $p, K_n\in \mathbb N$ and $T_\kappa \in \Tcal_{K_n}$, it is noted that for any $\kappa_i \in T_\kappa$, we have,
$
   \frac{\kappa_i - \kappa_{i-p} }{p} \, = \, \frac{1}{p} \sum^{i-p+1}_{s=i} ( \kappa_s - \kappa_{s-1} ) \, \le \, \frac{1}{p} \cdot p \cdot \frac{c_{\kappa, 2} }{K_n} \, \le \, c_{\kappa, 2}/K_n.
$
Moreover, in view of $\kappa_{i-p}=0$ for any $i \le p$ and $\kappa_i=1$ for any $i\ge K_n$, it can be shown that for each $1\le i \le K_n+p-1$, $\frac{\kappa_i - \kappa_{i-p} }{p} \ge c_{\kappa, 1}/(p \cdot K_n)$ such that $\frac{p}{\kappa_i - \kappa_{i-p} } \le p\cdot K_n/c_{\kappa, 1}$. In summary, we have,  for each  $i=1, \ldots, K_n+p-1$,
\begin{equation} \label{eqn:knot_derivative_bd}
 \frac{ c_{\kappa, 1} }{p \cdot K_n} \, \le \, \frac{\kappa_i - \kappa_{i-p} }{p} \, \le \, \frac{c_{\kappa, 2} }{K_n}, \qquad \mbox{and} \qquad
  \frac{K_n}{c_{\kappa, 2} } \, \le \, \frac{p}{\kappa_i - \kappa_{i-p} } \, \le \,  \frac{p \cdot K_n}{c_{\kappa, 1} }. 
\end{equation}

Using the above notation, we state the main result of the paper, i.e., the uniform Lipschitz property of $\wh b_{P, T_\kappa}$, as follows:

\begin{theorem} \label{thm:uniform_Lip}
Let $m\in \mathbb N$ and constants $c_\omega, c_{\kappa, 1}, c_{\kappa, 2}$ be fixed, where
$c_\omega \ge 1$ and $0<c_{\kappa,1} \le 1 \le c_{\kappa, 2}$.
For any $n, K_n \in \mathbb N$, 
let $\wh b_{P, T_\kappa}:\mathbb R^{K_n+m-1} \rightarrow  \mathbb R^{K_n+m-1}$ be the piecewise linear function in (\ref{eqn:opt_spline_coeff_matrix}) corresponding to the $m$th order B-spline defined by the design point sequence $P \in \mathcal P_n$ and the knot sequence $T_\kappa \in \mathcal T_{K_n}$. Then there exists a positive constant $c_\infty$, depending on $m, c_{\kappa, 1}$ only, such that for any increasing sequence $( K_n )$ with $K_n \rightarrow \infty$ and $K_n/n \rightarrow 0$ as $n \rightarrow \infty$, there exists $n_* \in \mathbb N$, depending on $(K_n)$ (and the fixed constants $m, c_\omega, c_{\kappa, 1}, c_{\kappa, 2}$) only,  such that for any $P \in \mathcal P_n$ and $T_\kappa \in \mathcal T_{K_n}$ with all $n \ge n_*$,
\[
   \left \| \, \wh b_{P, T_\kappa} (u) - \wh b_{P, T_\kappa}(v) \, \right \|_\infty \, \le \, c_\infty \, \big \| \, u - v \, \big \|_\infty, \quad \forall \ u, v \in \mathbb R^{K_n+m-1}.
\]
\end{theorem}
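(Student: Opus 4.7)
My plan is to reduce the uniform Lipschitz statement to a uniform $\ell_\infty$-operator-norm bound on the coefficient matrices $M_\alpha := F_\alpha^T(F_\alpha \Lambda_{K_n,P,T_\kappa} F_\alpha^T)^{-1} F_\alpha$ of the linear pieces of $\wh b_{P,T_\kappa}$ from (\ref{eqn:piecewise_linear}), and then to bring in Shardin's theorem (Theorem~\ref{thm:Shardin_bound}). Since the Lipschitz constant of a globally piecewise linear map equals the supremum of the operator norms of its pieces, it suffices to bound $\|M_\alpha\|_\infty$ uniformly in $\alpha \subseteq \{1,\ldots,K_n-1\}$, $P\in\mathcal P_n$, $T_\kappa\in\mathcal T_{K_n}$, and in $n$ large enough. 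The basis invariance of $M_\alpha$ noted after (\ref{eqn:piecewise_linear}) is crucial: it lets me choose $F_\alpha^T$ to be the knot-insertion matrix from the coarser knot sequence $T_\kappa^\alpha := T_\kappa\setminus\{\kappa_i:i\in\alpha\}$ to $T_\kappa$. Lemma~\ref{lem:shape_ineq} identifies the null space of $(\wt D_{m,T_\kappa})_{\alpha\bullet}$ with exactly the image of this map, because $(\wt D_{m,T_\kappa} b)_i=0$ says the $(m-1)$st-derivative jump at $\kappa_i$ vanishes, allowing $\kappa_i$ to be dropped from the knot sequence. Two structural facts about $F_\alpha^T$ will be decisive: (i) partition of unity on the coarse side makes $F_\alpha^T$ row-stochastic with nonnegative entries, and (ii) expanding each coarse B-spline $\tilde B_\ell = \sum_k K_{k\ell} B^{T_\kappa}_{m,k}$ in the fine basis yields the algebraic identity $F_\alpha\, G_{T_\kappa}\, F_\alpha^T = G_{T_\kappa^\alpha}$, where $G_T := \bigl[\int B^T_{m,i} B^T_{m,j}\bigr]_{i,j}$ is the unnormalized Gramian on knots $T$.

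Next I approximate $\Lambda_{K_n,P,T_\kappa} = K_n\wh X^T\Theta_n\wh X$ by $K_n\cdot G_{T_\kappa}$. Each entry is a weighted left Riemann sum for $K_n\int B^{T_\kappa}_{m,i}B^{T_\kappa}_{m,j}$, and its error is controlled by the design spacing $c_\omega/n$, the local support of the products, and their modulus of continuity; combining with (\ref{eqn:knot_derivative_bd}) this error is uniformly $o(1/K_n)$ per entry as soon as $K_n/n\to 0$. Using identity (ii) I then get
\[
F_\alpha\, \Lambda_{K_n,P,T_\kappa}\, F_\alpha^T \ = \ K_n\, G_{T_\kappa^\alpha} + E_\alpha
\]
with $E_\alpha$ small. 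Factoring $G_{T_\kappa^\alpha} = \tilde D_\alpha\, \tilde G^{\mathrm{nrm}}_\alpha\, \tilde D_\alpha$ with $\tilde D_\alpha = \mathrm{diag}(\|\tilde B^{T_\kappa^\alpha}_{m,j}\|_{L_1})$, Shardin's theorem bounds $\|(\tilde G^{\mathrm{nrm}}_\alpha)^{-1}\|_\infty \le C_m$ uniformly in the coarse knot sequence. A standard perturbation argument then gives $(F_\alpha \Lambda F_\alpha^T)^{-1} = K_n^{-1}\tilde D_\alpha^{-1}(\tilde G^{\mathrm{nrm}}_\alpha)^{-1}\tilde D_\alpha^{-1} + \text{(small)}$ for all $n\ge n_*$, where $n_*$ is dictated by the size of $E_\alpha$ relative to the controlled growth of $\tilde D_\alpha^{-1}$; this is the origin of the threshold $n_*$ in the statement.

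The final and most delicate step is the row-sum bound on $M_\alpha$ itself; this is where I expect the main analytic obstacle. The naive submultiplicative estimate $\|M_\alpha\|_\infty \le \|F_\alpha^T\|_\infty \|(\cdot)^{-1}\|_\infty \|F_\alpha\|_\infty$ is doomed, because the column sums of $F_\alpha^T$ can grow like $K_n$ and $\|\tilde D_\alpha^{-1}\|_\infty$ can also blow up. Instead, for each row index $k$, I would expand
\[
\sum_{j=1}^{T_n}\bigl|[M_\alpha]_{kj}\bigr| \ \le \ K_n^{-1}\sum_{\ell,s} K_{k\ell}\, d_\ell^{-1}\, \bigl|[(\tilde G^{\mathrm{nrm}}_\alpha)^{-1}]_{\ell s}\bigr|\, d_s^{-1}\, \Bigl(\sum_{j} K_{js}\Bigr),
\]
and absorb the $K_n$-growth by using $d_s = \sum_i K_{is}\|B^{T_\kappa}_{m,i}\|_{L_1}$ together with the lower bound $\|B^{T_\kappa}_{m,i}\|_{L_1} \ge c_{\kappa,1}/(mK_n)$ from (\ref{eqn:knot_derivative_bd}), which yields $\sum_j K_{js}\le mK_n d_s/c_{\kappa,1}$. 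After this cancellation the bound reduces to $(m C_m/c_{\kappa,1})\sum_\ell K_{k\ell} d_\ell^{-1}$, and the remaining task is to bound this weighted sum uniformly. I expect this to follow from the local support of knot insertion (at most $m$ nonzero $K_{k\ell}$ for each fixed $k$) together with a careful case analysis tying the $L_1$-norms $d_\ell$ of the coarse B-splines whose supports contain that of $B^{T_\kappa}_{m,k}$ to the coarse knot spacing — the combinatorics of which consecutive fine knots were removed to form each coarse span. Once this is in place, combining with Shardin's constant $C_m$ yields $\|M_\alpha\|_\infty \le c_\infty$ with $c_\infty$ depending only on $m$ and $c_{\kappa,1}$, and the perturbation estimate for $E_\alpha$ pins down $n_*$.
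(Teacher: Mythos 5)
Your overall architecture is the paper's: reduce to a uniform bound on $\|F_\alpha^T(F_\alpha\Lambda_{K_n,P,T_\kappa}F_\alpha^T)^{-1}F_\alpha\|_\infty$, realize $F_\alpha^T$ as the knot-insertion matrix from the coarse sequence $V_{\alpha,T_\kappa}$ (the paper constructs it inductively and proves the knot-insertion identity in Proposition~\ref{prop:F_times_B_T=B_alpha}), approximate $\Lambda_{K_n,P,T_\kappa}$ by the Gramian, and invoke Shardin plus a perturbation lemma. Your exact identity $F_\alpha G_{T_\kappa}F_\alpha^T=G_{T_\kappa^\alpha}$ is in fact a cleaner route to the Gramian on the coarse knots than the paper's discretized matrices $X_{m,T_\kappa,L_n}$ and $H_{\alpha,T_\kappa,L_n}$. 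However, the step you yourself flag as the ``main analytic obstacle'' is a genuine gap, and as set up it cannot be closed: the quantity $\sum_\ell K_{k\ell}\,d_\ell^{-1}$ is \emph{not} uniformly bounded. Take $m=2$, equally spaced fine knots $\kappa_i=i/K_n$, and $\alpha=\{2,\dots,K_n-1\}$ so that the coarse sequence is $\{0,1/K_n,1\}$. Then $\tilde B_1=B^{T_\kappa}_{2,1}$ exactly, so for $k=1$ the only nonzero coefficient is $K_{11}=1$, while $d_1=\|\tilde B_1\|_{L_1}=1/(2K_n)$; hence $\sum_\ell K_{1\ell}d_\ell^{-1}=2K_n\to\infty$. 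No case analysis on the removed knots will rescue this.

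The source of the problem is your choice of the \emph{symmetric} normalization $G_{T_\kappa^\alpha}=\tilde D_\alpha\tilde G^{\mathrm{nrm}}_\alpha\tilde D_\alpha$, which forces one factor of $\tilde D_\alpha^{-1}$ onto the $F_\alpha^T$ side of the product, where nothing cancels it. Shardin's theorem as stated in Theorem~\ref{thm:Shardin_bound} is for the \emph{one-sided} normalization $[G]_{ij}=\langle B_i,B_j\rangle/\|B_i\|_{L_1}$, i.e.\ $G_\alpha=\tilde D_\alpha^{-1}G_{T_\kappa^\alpha}$, and this asymmetry is exactly what makes the argument work: writing $(G_{T_\kappa^\alpha})^{-1}=G_\alpha^{-1}\tilde D_\alpha^{-1}$ puts \emph{both} inverse $L_1$-norm factors on the right, where $\|\tilde D_\alpha^{-1}F_\alpha\|_\infty=\max_\ell d_\ell^{-1}\sum_k K_{k\ell}\le mK_n/c_{\kappa,1}$ by your own inequality $\sum_k K_{k\ell}\le mK_nd_\ell/c_{\kappa,1}$, while the left factor is just $\|F_\alpha^T\|_\infty=1$ from row-stochasticity. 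Then the naive submultiplicative bound you dismissed does work, giving $K_n^{-1}\cdot 1\cdot\rho_m\cdot(mK_n/c_{\kappa,1})=m\rho_m/c_{\kappa,1}$; this is precisely the paper's decomposition (\ref{eqn:upper_bd_uniformL}) with $\Xi^{(m)}_{\alpha,T_\kappa}$ playing the role of $\tilde D_\alpha^{-1}$ (up to the factor $m$), and Corollary~\ref{coro:F_T_XiF_bounds} supplying the two outer bounds. A secondary, smaller issue: the symmetric normalization would also degrade your perturbation step, since the normalized error picks up the same uncontrolled factor $\|F_\alpha^T\tilde D_\alpha^{-1}\|_\infty$ and would force a condition stronger than $K_n/n\to0$; with the one-sided normalization the error is $O(K_n/n)$ and the stated hypothesis suffices.
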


The above result can be refined when we focus on a particular sequence of  $P$ and $T_\kappa$.

\begin{corollary}
  Let $( K_n )$ be an increasing sequence with $K_n \rightarrow \infty$ and $K_n/n \rightarrow 0$ as $n \rightarrow \infty$, and $\big( (P_n, T_{K_n} ) \big)$ be a sequence in $\mathcal P_n \times \mathcal T_{K_n}$. Then there exists a positive constant $c'_\infty$, independent of $n$, such that for each $n$,
  \[
     \left \| \, \wh b_{P_n, T_{K_n}} (u) - \wh b_{P_n, T_{K_n}}(v) \, \right \|_\infty \, \le \, c'_\infty \, \big \| \, u - v \, \big \|_\infty, \quad \forall \ u, v \in \mathbb R^{K_n+m-1}.
  \]
\end{corollary}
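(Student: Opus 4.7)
The plan is to obtain the corollary as an essentially immediate consequence of Theorem~\ref{thm:uniform_Lip} by handling the ``large $n$'' and ``small $n$'' regimes separately and then taking a maximum. The point is that Theorem~\ref{thm:uniform_Lip} already supplies a uniform Lipschitz bound $c_\infty$ for all $n\ge n_*$ and for every choice $(P,T_\kappa)\in \mathcal P_n\times \mathcal T_{K_n}$; the only thing remaining is to control the finitely many exceptional indices $n<n_*$ along the specific sequence $\bigl((P_n,T_{K_n})\bigr)$.

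First I would invoke Theorem~\ref{thm:uniform_Lip} with the prescribed increasing sequence $(K_n)$ satisfying $K_n\to\infty$ and $K_n/n\to 0$. This yields a threshold $n_*\in\mathbb N$ and a constant $c_\infty>0$ such that for every $n\ge n_*$, every $P\in\mathcal P_n$, and every $T_\kappa\in\mathcal T_{K_n}$,
\[
\bigl\|\wh b_{P,T_\kappa}(u)-\wh b_{P,T_\kappa}(v)\bigr\|_\infty \le c_\infty\,\|u-v\|_\infty.
\]
Applied to the particular pairs $(P_n,T_{K_n})$, this takes care of every $n\ge n_*$ with Lipschitz constant $c_\infty$.

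Next I would deal with the indices $n=1,2,\ldots,n_*-1$. For each such $n$, the associated Gramian $\Lambda_{K_n,P_n,T_{K_n}}$ is positive definite (as noted in the text following~(\ref{eqn:opt_spline_coeff_matrix})), so the quadratic program~(\ref{eqn:opt_spline_coeff_matrix}) is strictly convex with polyhedral constraints. By the parametric quadratic programming / piecewise affine selection theory of Scholtes cited earlier, $\wh b_{P_n,T_{K_n}}(\cdot)$ is a globally Lipschitz piecewise linear function on $\mathbb R^{K_n+m-1}$; explicitly, on each selection piece it is given by~(\ref{eqn:piecewise_linear}) with a finite collection of index sets $\alpha$, so its Lipschitz constant under $\|\cdot\|_\infty$ is the maximum of the $\ell_\infty$-operator norms of those finitely many matrices. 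Denote this Lipschitz constant by $L_n<\infty$.

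Finally, I would set
\[
c'_\infty \, := \, \max\bigl\{\,c_\infty,\ L_1,\ L_2,\ \ldots,\ L_{n_*-1}\,\bigr\}.
\]
Since $n_*$ is finite and each $L_n$ is a finite constant determined by the given pair $(P_n,T_{K_n})$, $c'_\infty$ is a finite positive constant that does not depend on $n$, and by construction it dominates the Lipschitz constant of $\wh b_{P_n,T_{K_n}}$ for every $n$. The only mild subtlety — not really an obstacle — is to make sure one is entitled to assert global Lipschitz continuity of each individual $\wh b_{P_n,T_{K_n}}$ in the small-$n$ regime, which is exactly what the Scholtes reference already in the paper provides; no new analytic work beyond Theorem~\ref{thm:uniform_Lip} is needed.
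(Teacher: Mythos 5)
Your proposal is correct and is exactly the argument the paper intends (the corollary is stated without an explicit proof, resting precisely on this two-regime observation): Theorem~\ref{thm:uniform_Lip} covers all $n \ge n_*$ uniformly, and for the finitely many remaining $n$ each $\wh b_{P_n, T_{K_n}}$ is globally Lipschitz by the piecewise linearity noted after (\ref{eqn:opt_spline_coeff_matrix}), so the maximum over $c_\infty$ and the finitely many individual Lipschitz constants works. No gaps.
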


This corollary recovers the past results on the uniform Lipschitz property for $m=1, 2$ (e.g., \cite{WangShen_SICON13})  when the design points and knots are equally spaced on $[0, 1]$.

%
\subsection{Overview of the proof} \label{sect:proof_roadmap}

The proof of Theorem~\ref{thm:uniform_Lip} is somewhat technical. To facilitate the reading, we outline its key ideas and provide a road map of the proof  as follows. In view of the piecewise linear formulation of  $\wh b_{P, T_\kappa}$ in (\ref{eqn:piecewise_linear}), it suffices to establish a uniform bound of $\| F_\alpha^T \big(F_\alpha \Lambda_{K_n,P, T_\kappa} \, F_\alpha^T \big)^{-1}F_\alpha \|_\infty$ for all large $n$, regardless of $K_n$, $\alpha$, $P\in \mathcal P_n$, and $T_\kappa \in \Tcal_{K_n}$. Note that $\| F^T_\alpha\|_\infty$ can be uniformly bounded by choosing a suitable basis of the null space of $(\wt D_{m, T_\kappa})_{\alpha \bullet}$. Moreover, motivated by \cite{WangShen_SICON13}, we introduce a diagonal matrix $\Xi'_\alpha$ with positive diagonal entries such that $\| \Xi'_\alpha F_\alpha \|_\infty$ is uniformly bounded. Since $F_\alpha^T \big(F_\alpha \Lambda_{K_n,P, T_\kappa} \, F_\alpha^T \big)^{-1}F_\alpha = F_\alpha^T \cdot \big( \Xi'_\alpha F_\alpha \Lambda_{K_n,P, T_\kappa} \, F_\alpha^T  \big)^{-1} \cdot \Xi'_\alpha F^T_\alpha$, the proof boils down to finding a uniform bound of $\| \big( \Xi'_\alpha F_\alpha \Lambda_{K_n,P, T_\kappa} \, F_\alpha^T  \big)^{-1}  \|_\infty$, one of the key steps of the proof.

 A critical technique for  establishing the uniform bounds of $\| \big( \Xi'_\alpha F_\alpha \Lambda_{K_n,P, T_\kappa} \, F_\alpha^T  \big)^{-1}  \|_\infty$ and other related matrices is based on  a deep result in B-spline theory (dubbed de Boor's conjecture) proved by Shardin \cite{Shardin_AM01}. Roughly speaking, this result says that the $\ell_\infty$-norm of the inverse of the Gramian formed by the normalized B-splines of order $m$ is uniformly bounded, regardless of the knot sequence and the number of B-splines. To formally describe this result, we introduce more notation. Let $\langle \cdot, \cdot \rangle$ denote the $L_2$-inner product of real-valued univariate functions on $\mathbb R$, i.e., $\langle f, g\rangle := \int_{\mathbb R} f(x)g(x)\,dx$, and $\|\cdot\|_{L_1}$ denote the $L_1$-norm of a real-valued univariate function on $\mathbb R$, i.e., $\|f\|_{L_1} := \int_{\mathbb R} |f(x)|\,dx$.

\begin{theorem}\label{thm:Shardin_bound}\cite[Theorem~I]{Shardin_AM01}
Fix a spline order $m \in \mathbb N$. Let $a,b \in \mathbb R$ with $a < b$, and $\{ B^{T_\kappa}_{m, k} \}^{K+m-1}_{k=1}$ be the $m$th order B-splines  on $[a, b]$ defined by a knot sequence $T_\kappa:=\{ a=t_0 < t_1 < \dots < t_K=b \}$ for some $K \in \mathbb N$.
 Let $G \in \mathbb R^{(K+m-1)\times (K+m-1)}$ be the Gramian matrix given by
 \[
    \big[ G \, \big]_{i, j} \ := \  \frac{  \left \langle \, B^{T_\kappa}_{m, i}, \ B^{T_\kappa}_{m, j}  \right \rangle } {  \big\| B^{T_\kappa}_{m, i} \big\|_{L_1} }, 
    \qquad \forall \ i,j \,= \, 1, \ldots, K+m-1.
 \]
Then there exists a positive constant $\rho_m$, independent of $a, b, T_\kappa$ and $K$, such that $\|G^{-1}\|_\infty \leq \rho_m $.
\end{theorem}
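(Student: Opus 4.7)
The plan is to relate $\|G^{-1}\|_\infty$ to the operator norm of the $L_2$-orthogonal projection $P$ onto the spline space $S_m(T_\kappa) := \mathrm{span}\{B^{T_\kappa}_{m,k}\}$ regarded as a map $L_\infty \to L_\infty$, i.e., to de Boor's conjecture. Let $D := \mathrm{diag}(\|B^{T_\kappa}_{m,i}\|_{L_1})$ and let $G_{L_2}$ denote the unnormalized Gram matrix, so that the normalization in the statement gives $G = D^{-1} G_{L_2}$. For any $f \in L_\infty$, the projection $P f = \sum_k c_k B^{T_\kappa}_{m,k}$ has coefficients $c = G_{L_2}^{-1} [\langle f, B^{T_\kappa}_{m,j}\rangle]_j$. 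Since $|\langle f, B^{T_\kappa}_{m,j}\rangle| \le \|f\|_\infty \, \|B^{T_\kappa}_{m,j}\|_{L_1}$, we get $\|c\|_\infty \le \|G^{-1}\|_\infty \, \|f\|_\infty$, and the partition of unity yields $\|Pf\|_\infty \le \|c\|_\infty$. So the theorem is essentially equivalent to a uniform bound $\|P\|_{L_\infty \to L_\infty} \le C_m$, and my strategy is to establish the latter.

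First I would exploit the banded structure of $G$: the B-splines $B^{T_\kappa}_{m,i}$ and $B^{T_\kappa}_{m,j}$ have disjoint supports when $|i-j| \ge m$, so $G$ is symmetric positive definite with bandwidth $2m-1$. By the Demko--Moss--Smith theorem on inverses of banded matrices, the entries of $G^{-1}$ decay exponentially off-diagonal, $|(G^{-1})_{i,j}| \le C q^{|i-j|}$ with $q \in (0,1)$. For uniform knots this is enough, because the spectral condition number of $G$ is then controlled. For general $T_\kappa$ it is not, since the condition number can be arbitrarily large, so the decay constants $C, q$ may degenerate. To compensate, I would construct a local quasi-interpolant using de Boor--Fix dual functionals $\lambda_j$ satisfying $\lambda_j(B^{T_\kappa}_{m,k}) = \delta_{jk}$ and $|\lambda_j(f)| \le C_m \|f\|_\infty$. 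The resulting operator $Q_m f := \sum_j \lambda_j(f) B^{T_\kappa}_{m,j}$ reproduces every spline in $S_m(T_\kappa)$ and has uniformly bounded $L_\infty$-operator norm, giving a zeroth-order approximation to $P$ with knot-independent constants.

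The final step would be a Neumann-series type argument $P = Q_m + (I - Q_m) P$, using the reproducing property and the locality of $Q_m$ together with the Demko decay to iterate the correction. The main obstacle, and the reason this result is deep, lies in making this iteration uniform in $T_\kappa$: the ratios $\|B^{T_\kappa}_{m,j}\|_{L_1} / \|B^{T_\kappa}_{m,i}\|_{L_1}$ can be arbitrarily large for highly non-uniform knots, so the natural Neumann bound does not close without a much more delicate estimate balancing this weight amplification against the exponential off-diagonal decay. This balance, together with an induction on the spline order $m$, is the technical heart of Shardin's proof and is what ultimately yields a constant $\rho_m$ depending only on $m$.
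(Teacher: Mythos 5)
This statement is not proved in the paper at all: it is Shadrin's theorem (de Boor's conjecture), imported verbatim as \cite[Theorem~I]{Shardin_AM01}, with \cite{Golitschek_JoAT14} cited for a shorter proof. So the relevant question is whether your sketch actually constitutes a proof, and it does not. Your first paragraph is fine as far as it goes: the identity $c=G^{-1}D^{-1}[\langle f,B^{T_\kappa}_{m,j}\rangle]_j$ together with $|\langle f,B^{T_\kappa}_{m,j}\rangle|\le\|f\|_\infty\|B^{T_\kappa}_{m,j}\|_{L_1}$ and the partition of unity gives $\|P\|_{L_\infty\to L_\infty}\le\|G^{-1}\|_\infty$. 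But that is the \emph{wrong direction} for proving the stated theorem: you need to deduce the bound on $\|G^{-1}\|_\infty$ from a bound on $\|P\|_{L_\infty\to L_\infty}$, and the converse inequality is only sketched as ``essentially equivalent.'' It does hold up to the knot-free B-spline basis condition number $D_m$ (via $\|c\|_\infty\le D_m\|\sum_k c_kB^{T_\kappa}_{m,k}\|_\infty$ and a construction of $f\in L_\infty$ realizing prescribed averages $\langle f,B^{T_\kappa}_{m,j}\rangle/\|B^{T_\kappa}_{m,j}\|_{L_1}$), but you should at least state which ingredient closes it.

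The decisive gap is in your last paragraph, and you name it yourself. The Demko--Moss--Smith decay applied to the symmetrically normalized Gramian gives $|(\widetilde G^{-1})_{ij}|\le Cq^{|i-j|}$ with $C,q$ depending only on $m$ (since the $L_2$ condition number of the $L_2$-normalized B-spline basis is knot-free), but passing to the row sums of $G^{-1}=G_{L_2}^{-1}D$ introduces the ratios $\|B^{T_\kappa}_{m,j}\|_{L_1}/\|B^{T_\kappa}_{m,i}\|_{L_1}$, which can grow geometrically in $|i-j|$ at a rate that is not a priori dominated by $q^{|i-j|}$. Showing that the decay beats this weight amplification uniformly over all knot sequences \emph{is} the theorem; the quasi-interpolant $Q_m$ and the splitting $P=Q_m+(I-Q_m)P$ do not circumvent it, because $(I-Q_m)P$ must again be controlled on $L_\infty$ uniformly in $T_\kappa$, which is the same open loop. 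A proposal whose final step is ``this balance is the technical heart of Shardin's proof and is what ultimately yields $\rho_m$'' has deferred, not supplied, the proof. Given that this result occupied the field for some thirty years and its known proofs run to dozens of pages, the honest conclusion is that your write-up is a correct orientation toward the literature rather than a proof; for the purposes of this paper, citing \cite{Shardin_AM01} or \cite{Golitschek_JoAT14}, as the authors do, is the appropriate resolution.
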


Inspired by this theorem, we intend to approximate $\Xi'_\alpha F_\alpha \Lambda_{K_n,P, T_\kappa} \, F_\alpha^T$ and other relevant matrices by appropriate Gramian matrices of B-splines with uniform approximation error bounds. To achieve this goal, we construct a suitable matrix $F_\alpha$ (i.e., $F^{(m)}_{\alpha, T_\kappa}$) in Section~\ref{sect:construction_F} and approximated design matrix $X_{m, T_\kappa, L_n}$, which leads to $\wt \Lambda_{T_\kappa, K_n, L_n}$ that approximates $\Lambda_{K_n,P, T_\kappa}$, in Section~\ref{sect:approx_Lambda_matrix}. We then show via analytical tools and Theorem~\ref{thm:Shardin_bound} that these constructed matrices attain uniform bounds or uniform approximation error bounds  in Section~\ref{sect:uniform_bds}. With the help of these bounds, the uniform Lipschitz property is proven in Section~\ref{sect:proof_unif_Lip_final}.


%
\section{Proof of the Uniform Lipschitz Property} \label{sect:proof_unif_Lip}

In this section, we prove the uniform Lipschitz property stated in Theorem~\ref{thm:uniform_Lip}.

%
\subsection{Technical lemmas}


We present two technical lemmas for the proof of Theorem~\ref{thm:uniform_Lip}. The first lemma characterizes the difference between an integral of a continuous function and its discrete approximation; it  will be used multiple times through this section (cf. Propositions~\ref{prop:Z_approx_Bspline}, \ref{prop:Galpha_approximation}, and \ref{prop:error_bd_Lamda}).

\begin{lemma}\label{lem:funct_bound}
Let $\wt n \in \mathbb N$, $v=(v_k)\in \mathbb R^{\wt n}$, $[a,b] \subset \mathbb R$ with  $a<b$, and points $\{s_k\}_{k=0}^{\wt n}$ with $a = s_0 < s_1 < \cdots < s_{\wt n}=b$ such that $\max_{k=1, \ldots \wt n} |s_k - s_{k-1}| \leq \varrho$ for some $\varrho > 0$.
Let  $f:[a,b] \rightarrow \mathbb R$ be a continuous function which is differentiable on $[a,b]$ except at finitely many points in $[a,b]$.
Let the positive constants $\mu_1$ and $\mu_2$ be such that  
$\max_{k=1, \ldots \wt n} | v_k - f(s_{k-1})| \leq \mu_1$, and $|f^\prime(x)| \leq \mu_2$ for any $x\in [a, b]$ where $f'(x)$ exists. Then for each $i \in \{1,\ldots,\wt n\}$,
\[
\left| \, \sum_{k=1}^i v_k \, \big(s_k - s_{k-1} \big) - \int_{s_0}^{s_i} f(x)\,dx \, \right| \, \leq \, \mu_1(b-a) +\frac{3}{2}\,\mu_2\varrho (b-a).
\]
\end{lemma}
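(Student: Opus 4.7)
The plan is to split the error into a discretization contribution and a Riemann-sum quadrature contribution via the triangle inequality. Writing
\[
\sum_{k=1}^i v_k (s_k - s_{k-1}) - \int_{s_0}^{s_i} f(x)\,dx = \sum_{k=1}^i \bigl( v_k - f(s_{k-1}) \bigr)(s_k - s_{k-1}) + \sum_{k=1}^i \int_{s_{k-1}}^{s_k} \bigl( f(s_{k-1}) - f(x) \bigr) dx,
\]
reduces the claim to an estimate on each of the two summands on the right.

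The first summand is controlled immediately: the hypothesis $|v_k - f(s_{k-1})| \le \mu_1$ combined with the telescoping identity $\sum_{k=1}^i (s_k - s_{k-1}) = s_i - s_0 \le b - a$ yields a bound of $\mu_1 (b-a)$.

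For the second summand, I would first observe that, since $f$ is continuous on $[a,b]$ and differentiable with $|f'| \le \mu_2$ except at finitely many points, $f$ is globally Lipschitz on $[a,b]$ with constant $\mu_2$: on each open interval between consecutive non-differentiability points $f$ is absolutely continuous with derivative bounded by $\mu_2$, and continuity then stitches the pieces together across the exceptional points. Applied to $x \in [s_{k-1}, s_k]$, this gives $|f(x) - f(s_{k-1})| \le \mu_2 (x - s_{k-1})$, whence
\[
\left| \int_{s_{k-1}}^{s_k} \bigl( f(s_{k-1}) - f(x) \bigr) dx \right| \le \frac{\mu_2 (s_k - s_{k-1})^2}{2} \le \frac{\mu_2 \varrho}{2}(s_k - s_{k-1}),
\]
and summing over $k = 1, \ldots, i$ contributes at most $\tfrac{1}{2}\mu_2 \varrho (b-a)$, which is dominated by the stated $\tfrac{3}{2}\mu_2 \varrho(b-a)$.

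No serious obstacle is anticipated here: the whole argument is standard Riemann-sum analysis. The only mildly delicate point is justifying the global Lipschitz property of $f$ in the presence of finitely many non-differentiability points, which is resolved by the one-line continuity argument above. The factor $3/2$ in the statement is not tight; the sharper constant $1/2$ emerges from the analysis, so the stated inequality has ample slack (useful, for instance, if one prefers the cruder uniform bound $|f(x) - f(s_{k-1})| \le \mu_2 \varrho$ on the entire subinterval $[s_{k-1}, s_k]$, which also suffices to establish the claimed inequality).
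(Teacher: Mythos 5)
Your proposal is correct, and it uses the same basic decomposition as the paper: split the error via the triangle inequality into the data-discretization term, bounded by $\mu_1(b-a)$ through telescoping, and the Riemann-sum quadrature term, handled interval by interval. Where you differ is in the treatment of the quadrature term. The paper works on each subinterval $[s_{k-1},s_k]$ with a piecewise first-order Taylor expansion anchored at the finitely many non-differentiability points, invoking the Mean Value Theorem on each piece and accumulating the increments; this yields the per-interval bound $\tfrac{3}{2}\mu_2(s_k-s_{k-1})^2$ and hence the constant $\tfrac{3}{2}$ in the statement. You instead first establish that $f$ is globally Lipschitz with constant $\mu_2$ (correctly: the Mean Value Theorem on each open interval between consecutive exceptional points, plus continuity at those points, gives $|f(x)-f(s_{k-1})|\le \mu_2(x-s_{k-1})$), and then integrate directly to get $\tfrac{1}{2}\mu_2(s_k-s_{k-1})^2$ per interval. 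Your route is shorter and yields the sharper constant $\tfrac12$ in place of $\tfrac32$; since the lemma is only used to produce uniform $O(\varrho)$ error bounds downstream, the weaker stated constant is harmless, and your argument certainly establishes it.
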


\begin{proof}
Fix an arbitrary $k\in \{1,\ldots,\wt n \}$. Suppose that $\wt s_1, \ldots, \wt s_{\ell-1} \in (s_{k-1}, s_k)$ with
 $s_{k-1}:=\wt s_0< \wt s_1< \wt s_2< \cdots <\wt s_{\ell-1} < \wt s_{\ell}:=s_{k}$ are the only points where $f$ is non-differentiable on the interval $(s_{k-1}, s_k)$. It follows from the continuity of $f$ and the Mean-value Theorem that for each $j=1, \ldots, \ell$, there exists $\xi_j \in (\wt s_{j-1}, \wt s_{j})$ such that $f(\wt s_j)=f(\wt s_{j-1}) + f'(\xi_j)(\wt s_{j}- \wt s_{j-1})=f(s_{k-1})+\sum^j_{r=1} f'(\xi_r)(\wt s_r-\wt s_{r-1})$. Since $f$ is continuous and piecewise differentiable, we have
\begin{eqnarray*}
\lefteqn{ \left| \,\int_{s_{k-1}}^{s_k} f(x) \,dx -  \big(s_k - s_{k-1} \big) \,f(s_{k-1}) \, \right| } \\
& = & \left | \, \sum^{\ell}_{j=1} \int_{\wt s_{j-1}}^{\wt s_{j}} \Big[ f(\wt s_{j-1}) + f'(\xi_x) \big(x - \wt s_{j-1} \big) \Big]dx - \big(s_k- s_{k-1} \big) \,f(s_{k-1})  \, \right|  \\
 & \le & \left | \, \sum^{\ell}_{j=1} \left [ f(s_{k-1}) + \sum^{j-1}_{r=1} f'(\xi_r) (\wt s_r - \wt s_{r-1}) \right ] (\wt s_j - \wt s_{j-1} ) - \big(s_k- s_{k-1} \big) \,f(s_{k-1}) \, \right| \\
 & & \   + \  \left | \, \sum^{\ell}_{j=1} \int_{\wt s_{j-1}}^{\wt s_{j}} f'(\xi_x) \big(x - \wt s_{j-1} \big)dx  \, \right| 
%
%
\, \le \, \left | \, \sum^\ell_{j=1} \, f'(\xi_j) \big(\wt s_{j} - \wt s_{j-1} \big) \big(s_k - \wt s_{j} \big) \right | + \frac{\mu_2}{2} \sum^\ell_{j=1} \big( \wt s_j - \wt s_{j-1} \big)^2 \\
 & \leq & \frac{3 \mu_2}{2}  \Big( s_k - s_{k-1} \Big)^2 \ \le \ \frac{3 \mu_2 \varrho}{2}  \Big( s_k - s_{k-1} \Big).
\end{eqnarray*}
Consequently, for each $i \in \{1,\ldots,\wt n\}$,
\begin{eqnarray*}
\lefteqn{ \left|\sum_{k=1}^i v_k\,\big( s_k- s_{k-1} \big) -  \int_{s_0}^{s_i} f(x)\,dx \right|
\ \leq \ \sum_{k=1}^i \left|v_k\, \big( s_k- s_{k-1} \big) - \int_{s_{k-1}}^{s_k} f(x)\,dx\right| }\\
  & \leq & \sum_{k=1}^i \big( s_k- s_{k-1} \big) \big | v_k  - f(s_{k-1})\big | \, + \, \sum_{k=1}^i \left| \big( s_k- s_{k-1} \big) \, f(s_{k-1}) - \int_{s_{k-1}}^{s_k} f(x)\,dx\right| \\
& \leq & \mu_1 \sum_{k=1}^i \big( s_k- s_{k-1} \big) \, + \, \frac{3 \mu_2 \varrho}{2}\sum_{k=1}^i \big( s_k - s_{k-1} \big)
\ \leq \ \mu_1(b-a) +\frac{3}{2}\, \mu_2\varrho (b-a).
\end{eqnarray*}
This completes the proof.
\end{proof}

%
The second lemma asserts that if corresponding matrices from two families of square matrices are sufficiently close and the matrices from one family are invertible with uniformly bounded inverses, then so are the matrices from the other family.
This result is instrumental to attaining a uniform bound of the inverses of some size-varying matrices (cf. Corollary~\ref{coro:invert_HHt}).

\begin{lemma}\label{lem:matrix_approx}
Let $\{A_i \in \mathbb R^{n_i\times n_i} \,:\, i \in \Ical \, \}$ and $\{B_i \in \mathbb R^{n_i\times n_i} \,:\, i \in \Ical \, \}$ be two families of square matrices for a (possibly infinite) index set $\Ical$,  where
 $n_i \in \mathbb N$ need not be the same for different $i\in \Ical$.
Suppose that each $A_i$ is invertible with $\mu:=\sup_{i\in \Ical} \| A_i^{-1} \|_\infty<\infty$ and that for any $\varepsilon >0$, there are only finitely many $i \in \Ical$ satisfying
$\| A_i - B_i \|_\infty \geq \varepsilon$.  Then for all but finitely many $i\in \Ical$, $B_i$ is invertible with $\| B_i^{-1} \|_\infty \le \frac{3}{2} \mu$.
\end{lemma}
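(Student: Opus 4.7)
The plan is to prove Lemma~\ref{lem:matrix_approx} via a standard Neumann-series perturbation argument, writing each $B_i$ as a small perturbation of $A_i$ and controlling the resulting geometric series uniformly in $i$. The key algebraic identity is
\[
   B_i \, = \, A_i \, - \, (A_i - B_i) \, = \, A_i \Bigl( I_{n_i} \, - \, A_i^{-1}(A_i - B_i) \Bigr),
\]
so that invertibility of $B_i$ and a bound on $\|B_i^{-1}\|_\infty$ follow as soon as $\|A_i^{-1}(A_i - B_i)\|_\infty$ is strictly less than $1$, uniformly in $i$.

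The first step is to choose the perturbation threshold. Set $\varepsilon := 1/(3\mu)$; by the submultiplicativity of the $\ell_\infty$-induced norm, any $i \in \Ical$ with $\|A_i - B_i\|_\infty < \varepsilon$ satisfies
\[
  \bigl\| \, A_i^{-1}(A_i - B_i) \, \bigr\|_\infty \, \le \, \|A_i^{-1}\|_\infty \, \|A_i - B_i\|_\infty \, < \, \mu \cdot \frac{1}{3\mu} \, = \, \frac{1}{3}.
\]
By hypothesis, only finitely many $i \in \Ical$ fail this strict inequality, so the remainder of the argument applies to all but finitely many $i$.

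For each such $i$, the matrix $I_{n_i} - A_i^{-1}(A_i - B_i)$ is invertible by the Neumann series, with
\[
  \Bigl\| \bigl( I_{n_i} - A_i^{-1}(A_i - B_i) \bigr)^{-1} \Bigr\|_\infty \, \le \, \sum_{k=0}^\infty \left( \frac{1}{3} \right)^k \, = \, \frac{3}{2}.
\]
Consequently $B_i$ is invertible, and from the factorization above
\[
   B_i^{-1} \, = \, \bigl( I_{n_i} - A_i^{-1}(A_i - B_i) \bigr)^{-1} \, A_i^{-1},
\]
whence $\|B_i^{-1}\|_\infty \le (3/2) \cdot \mu$, as claimed. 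The main obstacle is arguably not an obstacle at all: the dimensions $n_i$ may vary across $\Ical$, but since every bound used (Neumann series, submultiplicativity, and the uniform bound $\mu$) is dimension-free, the argument applies verbatim to the size-varying family. The only substantive use of the hypothesis is the finiteness of $\{\,i : \|A_i - B_i\|_\infty \ge 1/(3\mu)\,\}$, which directly produces the ``all but finitely many $i$'' conclusion.
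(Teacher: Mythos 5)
Your proof is correct and follows essentially the same route as the paper: both set $\varepsilon = 1/(3\mu)$, factor $B_i = A_i\bigl(I + A_i^{-1}(B_i - A_i)\bigr)$, bound the perturbation by $1/3$, and obtain the factor $3/2$ from the geometric series. The only cosmetic difference is that the paper justifies invertibility of the perturbed identity via strict diagonal dominance before applying the same $1/(1-\frac13)$ bound, whereas you get both invertibility and the bound directly from the Neumann series.
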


\begin{proof}
    For the given positive constant $\mu:=\sup_{i\in \Ical} \| A_i^{-1} \|_\infty<\infty$, define the positive constant $\varepsilon := 1/(3\mu)$.
 Let $\mathcal I_\varepsilon : =\{ i \in \Ical \, : \, \| A_i- B_i \|_\infty<\varepsilon\}$. Note that there exist only finitely many $i \in \Ical$ such that $\| A_i - B_i \|_\infty \geq \varepsilon$.
Define $C_i := B_i - A_i$ so that $\|C_i \|_\infty < \varepsilon$  for each $i\in \Ical_\varepsilon$.
Since $B_i = A_i + C_i$ and $A_i$ is invertible, we have $A^{-1}_i B_i = I + A^{-1}_i C_i$. Hence, we obtain, via $\varepsilon= 1/(3\mu)$,
\[
   \left\| A^{-1}_i C_i \right\|_\infty \, \le \, \| A^{-1}_i \|_\infty \cdot \| C_i \|_\infty \le \mu \cdot \varepsilon \, = \, \frac{1}{3}, \qquad \forall \ i\in  \Ical_\varepsilon.
\]
This shows that $I+ A^{-1}_i C_i$ is strictly diagonally dominant, and thus is invertible. Therefore, $A^{-1}_i B_i$ is invertible, and so is $B_i$ for each $i\in \Ical_\varepsilon$. Hence all but finitely many $B_i, i\in \Ical$, are invertible.

By the virtue of $\left\| A^{-1}_i C_i \right\|_\infty \le 1/3$ for any $i\in \Ical_\varepsilon$, we  deduce that
\[
   \left\| \big(I + A^{-1}_i C_i\big)^{-1} \right\|_\infty \, \le \, \frac{1}{1 - \left\| A^{-1}_i C_i \right\|_\infty} \, \le \, \frac{3}{2}, \qquad \forall \ i\in  \Ical_\varepsilon.
\]
Using $A^{-1}_i B_i = I + A^{-1}_i C_i$ again, we further have that
\[
   \big\| B^{-1}_i \big\|_\infty \, = \, \left \|   \big(I + A^{-1}_i C_i\big)^{-1} \cdot A^{-1}_i \right\|_\infty \, \le \, \left \|   \big(I + A^{-1}_i C_i\big)^{-1} \right\|_\infty \cdot \| A^{-1}_i \|_\infty \, \le \, \frac{3}{2} \mu, \quad \forall \ i\in  \Ical_\varepsilon.
\]
This yields the desired result.
\end{proof}

%
\subsection{Construction of the matrix $F_\alpha$} \label{sect:construction_F}

In this subsection, we construct a suitable matrix $F_\alpha$ used in (\ref{eqn:piecewise_linear}).
For $K_n\in \mathbb N$, let $T_\kappa \in \mathcal T_{K_n}$ be a knot sequence, and   $\alpha \subseteq \{1, \ldots, K_n-1\}$ be an index set defined in (\ref{eqn:index_alpha}). The complement of $\alpha $ is $\ol{\alpha} = \{i_1,\ldots,i_{|\ol{\alpha}|}\}$ with $1 \leq i_1 < \cdots < i_{|\ol{\alpha}|} \leq K_n-1$.  For notational simplicity, define $q_\alpha := |\ol{\alpha}|+m$.

%
We introduce the following two matrices, both of which have full row rank:
\begin{equation} \label{eqn:E_F1_matrices}
E_{\alpha, T_\kappa} :=
\begin{bmatrix}
\onebld^T_{i_1} &				& &\\
	       	& \onebld^T_{i_2-i_1}   & &\\
               	&			&\ddots&  \\
               	&			& &\onebld^T_{K_n-i_{|\ol{\alpha}|}}\\
\end{bmatrix} \in \mathbb{R}^{(|\ol \alpha|+1) \times K_n}, \ 
F^{(1)}_{\alpha, T_\kappa}  :=
\begin{bmatrix}
I_{m-1}& 0 \\
 0 & E_{\alpha, T_\kappa} \\
\end{bmatrix} \in \mathbb R^{q_\alpha \times T_n},
\end{equation}
where we recall $T_n:=K_n+m-1$.
%
%
Note that each column of $F^{(1)}_{\alpha, T_\kappa}$ contains exactly one entry of 1, and all other entries are zero. The matrix $F^{(1)}_{\alpha, T_\kappa}$ characterizes the first order B-splines (i.e. the piecewise constant splines) with the knot sequence $\{ \kappa_{i_k} \}$ defined by $\ol\alpha$.
For the given $\alpha$,  define
\begin{equation}\label{eqn:tau_k}
\tau_{\alpha, {T_\kappa}, k} \, := \,
\begin{cases}
0,			   & \text{ for }  k = 1-m,\ldots,0\\
\kappa_{i_k}, 	   & \text{ for }  k = 1,\ldots,|\ol \alpha|\\
1,			   & \text{ for }  k = |\ol \alpha|+1,\ldots, q_\alpha. 
\end{cases}
\end{equation}
%
%
It is easy to verify that for each $k \in \{1,\ldots,|\ol \alpha|+1\}$ and $\ell \in \{1,\ldots,K_n\}$,
\begin{equation}\label{eqn:entry_F1}
\left(F^{(1)}_{\alpha, T_\kappa} \right)_{(k+m-1),(\ell+m-1)} = \left(E_{\alpha, T_\kappa} \right)_{k\ell} =
\left\{
\begin{array}{ll}
1, & \text{ if }\; \kappa_{\ell-1} \in [\tau_{\alpha, {T_\kappa}, k-1},\tau_{\alpha, {T_\kappa}, k})\\
0, & \text{ otherwise}.
\end{array}
\right.
\end{equation}
For each $p=1, \ldots, m$, we further introduce the following diagonal matrices
\begin{align}
\Xi^{(p)}_{\alpha, T_\kappa} & \, :=  \, \begin{bmatrix} I_{m-p} & 0 \\ 0 & \Sigma_{p, T_\kappa} \end{bmatrix}  \label{eqn:Xi_matrix} \\
 & \, = \, \text{diag} \left(\underbrace{1,\ldots,1,}_{(m-p)-\text{copies}} \frac{p}{ \tau_{\alpha, T_\kappa, 1}- \tau_{\alpha, T_\kappa, 1-p} }, \frac{p}{ \tau_{\alpha, T_\kappa, 2}- \tau_{\alpha, T_\kappa, 2-p} }, \ldots, \frac{p}{ \tau_{\alpha, T_\kappa, |\ol\alpha|+p}- \tau_{\alpha, T_\kappa, |\ol\alpha|} }
\right),   \notag
\end{align}
where $\Xi^{(p)}_{\alpha, T_\kappa}$ is of order $q_\alpha$, and by using the definition of the matrix $\Delta_{p, T_\kappa}$ in (\ref{eqn:matrix_Delta}),
\begin{align}
\wh \Delta_{p, T_\kappa}  & \, := \, \begin{bmatrix} I_{m-p} & 0 \\ 0 & \Delta_{p, T_\kappa} \end{bmatrix} \ \label{eqn:wt_Delta} \\
& \, = \,
\text{diag} \left(\underbrace{1,\ldots,1,}_{(m-p)-\text{copies}}\,
\frac{\kappa_1-\kappa_{1-p}}{p}, \frac{\kappa_2-\kappa_{2-p}}{p}, \ldots, \frac{\kappa_{K_n+p-1}-\kappa_{K_n-1}}{p} \right)\in \mathbb R^{T_n \times T_n}. \notag
%
\end{align}
In addition, define the following two matrices of order $r\in \mathbb N$:
\begin{equation} \label{eqn:def_Swh}
\wh S^{(r)} \, : = \, \begin{bmatrix}
1&1&1&\hdots&1\\
&1&1&\hdots&1\\
&&1&\hdots&1\\
&&&\ddots&\vdots\\
&&& &1\\
\end{bmatrix}, \quad 
\text{ and } \quad \wh D^{(r)} := \big(\wh{S}^{(r)} \big)^{-1} = \begin{bmatrix}
1&-1&&&\\
&1&-1&&\\
&&\ddots&\ddots&\\
&&&1&-1\\
&&&&1\\
\end{bmatrix}. 
\end{equation}
Here multiplication by the matrix $\wh S^{(r)}$ from right acts as discrete integration, while $\wh D^{(r)}$ is similar to a difference matrix.

With the above notation, we now define $F^{(p)}_{\alpha, T_\kappa}$ inductively: $F^{(1)}_{\alpha, T_\kappa}$ is defined in (\ref{eqn:E_F1_matrices}), and
\begin{equation}  \label{eqn:def_F}
F^{(p)}_{\alpha, T_\kappa} \, := \,
\wh D^{(q_\alpha)} \cdot \Xi^{(p-1)}_{\alpha, T_\kappa} \cdot F^{(p-1)}_{\alpha, T_\kappa} \cdot \wh \Delta_{p-1, T_\kappa} \cdot \wh S^{(T_n)} \, \in \, \mathbb R^{q_\alpha \times T_n}, \qquad p=2, \ldots, m.
\end{equation}
Since $\wh D^{(q_\alpha)}$, $\Xi^{(p-1)}_{\alpha, T_\kappa}$, $\wh \Delta_{p-1, T_\kappa}$, and  $\wh S^{(T_n)}$  are all invertible, each $F^{(p)}_{\alpha, T_\kappa}$ has full row rank by induction. Furthermore, it is easy to see that if $\alpha$ is the empty set, then $\alpha=\{ 1, \ldots, K_n-1\}$ so that $F^{(1)}_{\alpha, T_k}$ is the identity matrix $I_{T_n}$ for any $T_\kappa$, which further shows via induction in (\ref{eqn:def_F}) that $F^{(p)}_{\alpha, T_\kappa} = I_{T_n}$ for any $p=2, \ldots, m$ and any $T_\kappa$.

It is shown below that $F^{(m)}_{\alpha, T_\kappa}$ constructed above is a suitable choice for $F_\alpha$ in the piecewise linear formulation of $\wh b_{P, T_\kappa}$ in (\ref{eqn:piecewise_linear}).

%
\begin{proposition}\label{prop:null_basis_new}
If the index set $\alpha$ is nonempty, then
the columns of $\big(F^{(m)}_{\alpha, T_\kappa} \big)^T$ form a basis of the null space of $(\wt D_{m, T_\kappa})_{\alpha \bullet}$.
\end{proposition}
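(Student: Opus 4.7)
The plan is to verify three items: (i) the null space of $(\wt D_{m, T_\kappa})_{\alpha\bullet}$ has dimension $q_\alpha$, matching the column count of $(F^{(m)}_{\alpha, T_\kappa})^T$; (ii) those columns are linearly independent; and (iii) each lies in the null space. For (i), since $\wt D_{m, T_\kappa}$ has full row rank (as noted right after (\ref{eqn:matrix_wtD})), the row-restriction $(\wt D_{m, T_\kappa})_{\alpha\bullet}$ inherits full row rank, yielding nullity $T_n - |\alpha| = (K_n+m-1) - (K_n-1-|\ol\alpha|) = q_\alpha$. Item (ii) is exactly the full row rank of $F^{(m)}_{\alpha, T_\kappa}$, already noted after (\ref{eqn:def_F}) by the invertibility of $\wh D^{(q_\alpha)}$, $\Xi^{(p-1)}_{\alpha, T_\kappa}$, $\wh \Delta_{p-1, T_\kappa}$ and $\wh S^{(T_n)}$ in the recursion.

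The substantive step is (iii): $(\wt D_{m, T_\kappa})_{\alpha\bullet} (F^{(m)}_{\alpha, T_\kappa})^T = 0$. I plan to identify the $k$-th column of $(F^{(m)}_{\alpha, T_\kappa})^T$ with the coefficient vector, in the fine basis $\{B^{T_\kappa}_{m, j}\}_{j=1}^{T_n}$, of the reduced order-$m$ B-spline $B^{\tau_{\alpha, T_\kappa}}_{m, k}$ built on the reduced knot sequence $\{\tau_{\alpha, T_\kappa, k}\}$ of (\ref{eqn:tau_k}), whose interior knots are $\{\kappa_i : i \in \ol\alpha\}$. Granted this, for any column $b$ of $(F^{(m)}_{\alpha, T_\kappa})^T$ the function $g_b = \sum_j b_j B^{T_\kappa}_{m, j}$ is a spline on the coarser grid, so $g_b^{(m-1)}$ is piecewise constant with jumps confined to reduced interior knots. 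For $i \in \alpha$ we have $i \notin \ol\alpha$, so $\kappa_i$ is not a reduced knot and $g_b^{(m-1)}$ is continuous at $\kappa_i$; Lemma~\ref{lem:shape_ineq}(1) then identifies the jump at $\kappa_i$ with $(\wt D_{m, T_\kappa} b)_i$, which is therefore zero.

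The identification of $(F^{(m)}_{\alpha, T_\kappa})^T$ with the knot-refinement coefficients is proved by induction on $p$ in (\ref{eqn:def_F}). At the base case $p = 1$, the block form (\ref{eqn:E_F1_matrices}) with the entry formula (\ref{eqn:entry_F1}) directly gives the refinement identity $B^{\tau_{\alpha, T_\kappa}}_{1, k} = \sum_{\ell \,:\, \kappa_{\ell-1} \in [\tau_{\alpha, T_\kappa, k-1}, \tau_{\alpha, T_\kappa, k})} B^{T_\kappa}_{1, \ell}$ for each reduced indicator B-spline, while the leading $I_{m-1}$ block in $F^{(1)}_{\alpha, T_\kappa}$ represents $m-1$ extra degrees of freedom that act as integration constants at higher levels. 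The inductive step exploits (\ref{eqn:B_spline_derivative}) on both the coarse and fine sequences: $\Xi^{(p-1)}_{\alpha, T_\kappa}$ carries the reduced-side derivative weights $(p-1)/(\tau_{\alpha, T_\kappa, k-1} - \tau_{\alpha, T_\kappa, k-p})$ appearing in the derivative of $B^{\tau_{\alpha, T_\kappa}}_{p, k}$, while $\wh \Delta_{p-1, T_\kappa}$ carries the fine-side L$_1$-norm scalings $(\kappa_j - \kappa_{j-p+1})/(p-1)$ from (\ref{eqn:L1_norm_Bspline}); the composition with the anti-differentiation $\wh S^{(T_n)}$ and finite difference $\wh D^{(q_\alpha)}$ inverts the derivative relation, producing order-$p$ refinement coefficients from order-$(p-1)$ ones. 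The main obstacle is boundary bookkeeping: the repeated endpoint extensions $\kappa_{1-p} = \cdots = \kappa_0 = 0$ and $\kappa_{K_n} = \cdots = \kappa_{K_n+p-1} = 1$ (and their analogues for $\tau_{\alpha, T_\kappa}$) force several scaling factors to collapse to $1$---reflected in the leading identity blocks of $\wh \Delta_{p-1, T_\kappa}$ and $\Xi^{(p-1)}_{\alpha, T_\kappa}$---and one must verify that these align precisely with the leading $I_{m-1}$ of $F^{(1)}_{\alpha, T_\kappa}$ so no spurious shifts or rescalings accumulate through the recursion. An explicit small-case verification (say $m=2$, $K_n=3$, $\alpha = \{1\}$), where both sides can be computed by hand and matched against the evaluations of the reduced hat functions, provides a useful calibration before tackling the general induction.
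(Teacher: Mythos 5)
Your proposal is correct in outline but follows a genuinely different route from the paper. Items (i) and (ii) — the dimension count and the linear independence via full row rank — coincide with the paper's argument. For item (iii), however, the paper never leaves matrix algebra: it first checks directly from the structure of $E_{\alpha,T_\kappa}$ that $\big(D^{(T_n-1)}\big)_{(\alpha+m-1)\bullet}\big(F^{(1)}_{\alpha,T_\kappa}\big)^T=0$, then factors $F^{(m)}_{\alpha,T_\kappa}=Q\,F^{(1)}_{\alpha,T_\kappa}S_{m-1}$ with $S_p:=\wh\Delta_{m-p,T_\kappa}\wh S^{(T_n)}S_{p-1}$ and proves the telescoping identity $\wt D_{p,T_\kappa}S_p^T=\begin{bmatrix}0 & I\end{bmatrix}$ by induction, from which $(\wt D_{m,T_\kappa})_{\alpha\bullet}\big(F^{(m)}_{\alpha,T_\kappa}\big)^T=0$ falls out in one line. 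You instead route through the knot-insertion identity $\sum_j\big[F^{(m)}_{\alpha,T_\kappa}\big]_{\ell j}B^{T_\kappa}_{m,j}=B^{V_{\alpha,T_\kappa}}_{m,\ell}$ and then read off $(\wt D_{m,T_\kappa}b)_i=0$ for $i\in\alpha$ from the continuity of $g_b^{(m-1)}$ at the removed knot $\kappa_i$; that last deduction is sound, since by Lemma~\ref{lem:shape_ineq}(1) the jump of $g_b^{(m-1)}$ at $\kappa_i$ is exactly $(\wt D_{m,T_\kappa}b)_i$. Your route is conceptually more illuminating (it explains \emph{why} the construction works), while the paper's is shorter and self-contained at this point in the development.

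The caveat is that you have shifted essentially all of the weight onto the refinement identity, which you only sketch. That identity is true — it is precisely Proposition~\ref{prop:F_times_B_T=B_alpha} of the paper — but the paper treats it as a separate, later result and proves it not by your proposed induction but by discretizing both sides (via $X_{m,T_\kappa,L_n}$ and $Z_{m,\alpha,T_\kappa,L_n}$), invoking the uniform error bound of Proposition~\ref{prop:Z_approx_Bspline}, and passing to the limit $L_n\to\infty$ over rationals. Your direct induction on $p$ should be workable — the standard integral formula $\int_0^xB^{T_\kappa}_{p-1,k}(t)\,dt=\frac{\kappa_k-\kappa_{k-p+1}}{p-1}\sum_{j\ge k}B^{T_\kappa}_{p,j}(x)$ is exactly what $\wh\Delta_{p-1,T_\kappa}\wh S^{(T_n)}$ encodes, and its inverse on the coarse sequence is what $\wh D^{(q_\alpha)}\Xi^{(p-1)}_{\alpha,T_\kappa}$ encodes — but the boundary bookkeeping you flag (aligning the leading identity blocks with the repeated endpoint knots) is the genuinely delicate part and must actually be carried out before the proof is complete. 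As written, the hardest step of your argument is deferred rather than done.
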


\begin{proof}
To simplify notation, we use $F_{\alpha, p}$ to denote $F^{(p)}_{\alpha, T_\kappa}$ and drop the subscript $T_\kappa$ in $\wt D_{p, T_\kappa}$, $\Delta_{p, T_\kappa}$, and $\wh \Delta_{p, T_\kappa}$ for $p=1, \ldots, m$; see (\ref{eqn:matrix_Delta})-(\ref{eqn:matrix_wtD}) for the definitions of $\Delta_{p, T_\kappa}$ and $\wt D_{p, T_\kappa}$ respectively.

We first show that the matrix product $\big(D^{(T_n-1)} \big)_{(\alpha+m-1)\bullet} F^T_{\alpha, 1}=0$. Since the first $(m-1)$ columns of $\big(D^{(T_n-1)}\big)_{(\alpha+m-1)\bullet}$ contain all zero entries and
\[
   \big(F_{\alpha, 1}^T \big)_{\bullet,1:(m-1)}  \, = \, \begin{bmatrix} I_{m-1}\\ 0 \end{bmatrix} \in \mathbb R^{T_n \times (m-1)},
\]
we see that the first $(m-1)$ columns of $\big(D^{(T_n-1)} \big)_{(\alpha+m-1)\bullet} F^T_{\alpha, 1}$ have all zero entries. (If $m=1$, this result holds trivially.)
Moreover, for each $j$ with $ m\le j \le q_\alpha:=|\ol\alpha|+m$,  we see, in light of (\ref{eqn:E_F1_matrices}) and (\ref{eqn:entry_F1}), that
\[
(F_{\alpha, 1})_{j \bullet} \, = \, \begin{pmatrix} \underbrace{0,\ldots,0}_{i_{k-1}+m-1}, & \underbrace{1,\ldots,1}_{i_k-i_{k-1}}, &\underbrace{0,\ldots,0}_{K_n-i_k}
\end{pmatrix}.
\]
where $k= j-m+1$, $i_0 := 0$, and $i_{|\ol{\alpha}|+1} := K_n$.
Note that for each $s\in \alpha$, the row $\big(D^{(T_n-1)}\big)_{(s+m-1)\bullet}$ is of the form $(\underbrace{0,\dots,0}_{s+m-2},-1,1,\underbrace{0,\dots,0}_{K_n-1-s})$. Using the fact that $i_{k-1},i_k\in \ol{\alpha} \cup \{0,K_n\}$ for any $k = 1,\ldots,|\ol{\alpha}|+1$, we deduce that if $s\in \alpha$, then $s \notin \{i_{k-1}, i_k \}$ for $k= 1,\ldots,|\ol{\alpha}|+1$. This shows that
$
  \big(D^{(T_n-1)}\big)_{(s+ m-1)\bullet} \cdot \big(F_{\alpha, 1}^T \big)_{\bullet j} = 0,  \ \ \forall \ s \in \alpha.
$
Hence the matrix product $\big(D^{(T_n-1)} \big)_{(\alpha+m-1)\bullet} F^T_{\alpha, 1}=0$.

It is easy to show via (\ref{eqn:matrix_wtD}), (\ref{eqn:E_F1_matrices}), and the above result that the proposition holds when $m=1$. Consider $m \ge 2$ for the rest of the proof. Let $S_0:=I_{T_n}$, and $S_p := \wh \Delta_{m-p} \cdot \wh S^{(T_n)} \cdot S_{p-1}$ for $p=1, \ldots, m-1$, where $\wh S^{(T_n)}$ is defined in (\ref{eqn:def_Swh}). It follows from the definition of $S_p$ and (\ref{eqn:def_F}) that $F_{\alpha, m} = Q \cdot F_{\alpha, 1} \cdot S_{m-1}$ for a suitable matrix $Q$.
We next show via induction on $p$ that 
\begin{equation}\label{eqn:wtD_S}
\wt D_p \cdot S_p^T  \, = \, \begin{bmatrix} 0_{(T_n-p)\times p} & I_{T_n-p} \end{bmatrix}, \qquad \forall \ p=0,1, \ldots, m-1.
\end{equation}
Clearly, this result holds for $p= 0$. Given $p \ge 1$ and assuming that (\ref{eqn:wtD_S}) holds for $p-1$, it follows from (\ref{eqn:matrix_wtD}), (\ref{eqn:def_F}), and the induction hypothesis that
\begin{eqnarray*}
\wt D_p \cdot S^T_p &= & \Big( \Delta^{-1}_{m-p} \, D^{(T_n-p)} \wt D_{p-1} \Big) \cdot \Big( S_{p-1}^T \, ( \wh S^{(T_n)} )^T \, \wh \Delta_{m-p} \Big)\\
&= & \Delta_{m-p}^{-1} \, D^{(T_n-p)} \begin{bmatrix} 0_{(T_n-(p-1))\times (p-1)} & I_{T_n-(p-1)} \end{bmatrix} \big( \wh S^{(T_n)} \big)^T  \, \wh \Delta_{m-p}\\
& = & \Delta_{m-p}^{-1} \, D^{(T_n-p)} \begin{bmatrix}\onebld_{(T_n-(p-1))\times(p-1)} & \big(\wh S^{(T_n-(p-1))} \big)^T \end{bmatrix} \, \wh \Delta_{m-p} \\
& = &  \Delta^{-1}_{m-p} \, \begin{bmatrix} 0_{(T_n-p)\times p} & I_{T_n-p} \end{bmatrix} \, \wh \Delta_{m-p}\\
& = & \Delta^{-1}_{m-p} \, \begin{bmatrix}0_{(T_n-p)\times p} & \Delta_{m-p} \end{bmatrix}
 \, = \,  \begin{bmatrix}0_{(T_n-p)\times p} & I_{T_n-p} \end{bmatrix},
\end{eqnarray*}
where the second to last equality is due to (\ref{eqn:wt_Delta}). This gives rise to (\ref{eqn:wtD_S}).

Combining the above results, we have
\begin{align*}
  (\wt D_m)_{\alpha \bullet} \cdot F_{\alpha, m}^T & =  \Big( \big( D^{(K_n-1)} \big)_{\alpha \bullet}\, \wt D_{m-1}  \Big) \cdot \Big(Q \, F_{\alpha, 1} \, S_{m-1} \Big)^T =  \big( D^{(K_n-1)} \big)_{\alpha \bullet} \cdot \wt D_{m-1} \cdot S^T_{m-1} \cdot F^T_{\alpha, 1} \cdot Q^T \\
  & =  \big( D^{(K_n-1)} \big)_{\alpha \bullet} \begin{bmatrix}0_{K_n\times (m-1)} & I_{K_n} \end{bmatrix}  F^T_{\alpha, 1} \cdot Q^T \, = \, \big( D^{(T_n-1)} \big)_{(\alpha+m-1) \bullet} \cdot F^T_{\alpha, 1} \cdot Q^T \, = \, 0.
\end{align*}
Recall that $F_{\alpha, m}$ has full row rank. Hence, the $q_\alpha$ columns of $F_{\alpha, m}^T$ are linearly independent. Additionally, since $\wt D_{m}$ is of full row rank as indicated after (\ref{eqn:matrix_wtD}), so is $(\wt D_m)_{\alpha \bullet}$. Therefore, $\text{rank}[(\wt D_m)_{\alpha \bullet}] = |\alpha|$ and
the null space of $(\wt D_m)_{\alpha \bullet}$ has dimension $(K_n+m-1-|\alpha|)$, which is equal to $q_\alpha$ in light of $|\alpha|+|\ol\alpha|=K_n-1$. Therefore the columns of $F_{\alpha, m}^T$ form a basis for the null space of $(\wt D_m)_{\alpha \bullet}$. 
\end{proof}

Before ending this subsection, we present a structure property of $F_{\alpha,T_\kappa}^{(p)}$ and a preliminary uniform bound for $\big\| F_{\alpha,T_\kappa}^{(m)} \big\|_\infty$, which will be useful later (cf. Corollary~\ref{coro:F_T_XiF_bounds} and Proposition~\ref{prop:F_times_B_T=B_alpha}).

\begin{lemma} \label{lem:F_m_bound}
For any  $m, K_n \in \mathbb N$, any knot sequence $T_\kappa \in \Tcal_{K_n}$ and any index set $\alpha$ defined in (\ref{eqn:index_alpha}), the following hold:
\begin{itemize}
  \item [(1)] For each $p=1, \ldots, m-1$,
    $
       F_{\alpha,T_\kappa}^{(p)} = \begin{bmatrix} I_{m-p} & 0 \\ 0 & W^{(p)}_{\alpha, T_\kappa} \end{bmatrix}
    $
    for some matrix $W^{(p)}_{\alpha, T_\kappa} \in \mathbb R^{(|\ol \alpha|+p)\times (K_n+p-1)}$.
  \item[(2)]
$\displaystyle
  \big\|F_{\alpha,T_\kappa}^{(m)} \big\|_\infty \, \leq \, \left(\frac{2m}{c_{\kappa,1}}\cdot \max\Big(1, \frac{c_{\kappa, 2}}{K_n} \Big) \cdot  T_n \right)^{m-1} \cdot \big( K_n \big)^{m},
$
where $T_n=K_n+m-1$.
 \end{itemize}
\end{lemma}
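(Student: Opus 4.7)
The plan is to prove both parts by induction on $p$ using the recursion (\ref{eqn:def_F}).

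For part~(1), the base case $p = 1$ is immediate from (\ref{eqn:E_F1_matrices}), with $W^{(1)}_{\alpha,T_\kappa} = E_{\alpha,T_\kappa}$. Assume the block form holds for $p-1$. Since $\Xi^{(p-1)}_{\alpha,T_\kappa}$ and $\wh\Delta_{p-1,T_\kappa}$ both have $I_{m-p+1}$ as their top-left block, the product $M := \Xi^{(p-1)}_{\alpha,T_\kappa} F^{(p-1)}_{\alpha,T_\kappa} \wh\Delta_{p-1,T_\kappa}$ inherits the block form $\begin{bmatrix} I_{m-p+1} & 0 \\ 0 & M_{22} \end{bmatrix}$. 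Right-multiplication by $\wh S^{(T_n)}$ replaces each column by the cumulative sum of the preceding columns; consequently, in $M\wh S^{(T_n)}$ the top-left $(m-p+1)\times(m-p+1)$ block becomes $\wh S^{(m-p+1)}$, the top-right block becomes a block of all ones, and the bottom-left block stays zero. Left-multiplication by $\wh D^{(q_\alpha)}$ then differences consecutive rows. For $i \le m-p$, subtracting row $i+1$ from row $i$ (both in the top block) turns $\wh S^{(m-p+1)}$ into $I_{m-p}$ on its first $m-p$ rows and cancels the ones in columns $j > m-p$, yielding $e_i^T$ in row $i$. Rows $i \ge m-p+1$ of the bottom-left block stay zero because $\wh D^{(q_\alpha)}$ is upper bidiagonal and both row $i$ and row $i+1$ of $M\wh S^{(T_n)}$ vanish in columns $1,\ldots,m-p$. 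Setting $W^{(p)}_{\alpha,T_\kappa}$ equal to the bottom-right $(|\ol\alpha|+p)\times(K_n+p-1)$ block of $F^{(p)}_{\alpha,T_\kappa}$ completes the induction.

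For part~(2), apply submultiplicativity of $\|\cdot\|_\infty$ to (\ref{eqn:def_F}) to obtain
\begin{equation*}
\|F^{(p)}_{\alpha,T_\kappa}\|_\infty \, \le \, \|\wh D^{(q_\alpha)}\|_\infty \cdot \|\Xi^{(p-1)}_{\alpha,T_\kappa}\|_\infty \cdot \|F^{(p-1)}_{\alpha,T_\kappa}\|_\infty \cdot \|\wh\Delta_{p-1,T_\kappa}\|_\infty \cdot \|\wh S^{(T_n)}\|_\infty.
\end{equation*}
Each factor is bounded individually: $\|\wh D^{(q_\alpha)}\|_\infty = 2$ and $\|\wh S^{(T_n)}\|_\infty = T_n$ by direct inspection of (\ref{eqn:def_Swh}); $\|\wh\Delta_{p-1,T_\kappa}\|_\infty \le \max(1, c_{\kappa,2}/K_n)$ from (\ref{eqn:knot_derivative_bd}); and $\|\Xi^{(p-1)}_{\alpha,T_\kappa}\|_\infty \le (p-1) K_n/c_{\kappa,1} \le m K_n/c_{\kappa,1}$ by bounding the diagonal entries $(p-1)/(\tau_k - \tau_{k-p+1})$ of $\Sigma_{p-1,T_\kappa}$ via a case-by-case lower bound $\tau_k - \tau_{k-p+1} \ge c_{\kappa,1}/K_n$. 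The base case is $\|F^{(1)}_{\alpha,T_\kappa}\|_\infty \le K_n$, obtained from (\ref{eqn:E_F1_matrices}) since the $k$-th row of $E_{\alpha,T_\kappa}$ contains $i_k - i_{k-1} \le K_n$ ones and each identity row contributes row sum $1$. Iterating from $p = 2$ to $p = m$ produces $m - 1$ identical factors and yields
\begin{equation*}
\|F^{(m)}_{\alpha,T_\kappa}\|_\infty \, \le \, K_n \left(\frac{2m K_n \max(1, c_{\kappa,2}/K_n) T_n}{c_{\kappa,1}}\right)^{m-1} \, = \, \left(\frac{2m \max(1, c_{\kappa,2}/K_n) T_n}{c_{\kappa,1}}\right)^{m-1} K_n^m,
\end{equation*}
as required.

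The main obstacle is the verification of $\tau_k - \tau_{k-p+1} \ge c_{\kappa,1}/K_n$ across the full index range $k = 1, \ldots, |\ol\alpha| + p - 1$ appearing in $\Sigma_{p-1,T_\kappa}$. The $\tau$-sequence in (\ref{eqn:tau_k}) is padded with $m$ copies of $0$ on the left and up to $m$ copies of $1$ on the right, so one must exclude the scenario in which both $\tau_k$ and $\tau_{k-p+1}$ sit in the same padding block, which would make the gap vanish. Because $k \le |\ol\alpha| + p - 1$ forces $k - (p-1) \le |\ol\alpha|$, the two endpoints cannot both lie in the right padding, and a symmetric argument rules out the left padding. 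In the remaining cases the gap reduces to a sum of gaps from the underlying $\kappa$-sequence, to which (\ref{eqn:knot_derivative_bd}) supplies the required lower bound.
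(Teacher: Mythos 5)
Your proof is correct and follows essentially the same route as the paper: induction on $p$ exploiting the common block structure of $\Xi^{(p-1)}_{\alpha,T_\kappa}$, $F^{(p-1)}_{\alpha,T_\kappa}$, $\wh\Delta_{p-1,T_\kappa}$ together with the action of $\wh S^{(T_n)}$ and $\wh D^{(q_\alpha)}$ for part (1), and submultiplicativity of $\|\cdot\|_\infty$ with the same per-factor bounds ($2$, $mK_n/c_{\kappa,1}$, $\max(1,c_{\kappa,2}/K_n)$, $T_n$, base case $K_n$) for part (2). Your extra care in verifying $\tau_k-\tau_{k-p+1}\ge c_{\kappa,1}/K_n$ across the padded index range is a detail the paper leaves implicit by citing (\ref{eqn:knot_derivative_bd}), and is a welcome addition rather than a deviation.
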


\begin{proof}
(1) Fix $m$, $K_n$, $T_\kappa \in \Tcal_{K_n}$, and $\alpha$.  We prove this result by induction on $p$. By the definition of $F_{\alpha,T_\kappa}^{(1)} $ in (\ref{eqn:E_F1_matrices}), we see that statement (1) holds for $p=1$ with $W^{(p)}_{\alpha, T_\kappa} = E_{\alpha, T_\kappa}$. Now suppose statement (1) holds for $p=1, \ldots, p^{\,\prime}$ with $p^{\,\prime}\le m-2$, and consider $p^{\,\prime}+1$. In view of the recursive definition (\ref{eqn:def_F}) and the definitions of $\wh D^{(q_\alpha)}$, $\Xi_{\alpha,T_\kappa}^{(p)}$,  $\wh \Delta_{p,T_\kappa}$, and $\wh S^{(T_n)}$ given in~(\ref{eqn:Xi_matrix}),~(\ref{eqn:wt_Delta}), and (\ref{eqn:def_Swh}), we deduce via the induction hypothesis that
\begin{eqnarray*}
       F^{(p^{\,\prime}+1)}_{\alpha, T_\kappa} & = & \wh D^{(q_\alpha)} \cdot \Xi^{(p^{\,\prime})}_{\alpha, T_\kappa} \cdot F^{(p^{\,\prime})}_{\alpha, T_\kappa} \cdot \wh \Delta_{p^{\,\prime}, T_\kappa} \cdot \wh S^{(T_n)} \\
       &  = & \wh D^{(q_\alpha)} \cdot \begin{bmatrix} I_{m-p^{\,\prime}} & 0 \\ 0 & \Sigma_{ p^{\,\prime}, T_\kappa } \end{bmatrix} \cdot \begin{bmatrix} I_{m-p^{\,\prime}} & 0 \\ 0 & W^{(p^{\,\prime})}_{\alpha, T_\kappa} \end{bmatrix} \cdot \begin{bmatrix} I_{m-p^{\,\prime}} & 0 \\ 0 & \Delta_{ p^{\,\prime}, T_\kappa }  \end{bmatrix} \cdot \wh S^{(T_n)} \\
       &  = &  \wh D^{(q_\alpha)} \cdot  \begin{bmatrix} \wh S^{(m-p^{\,\prime})} & \mathbf 1_{(m-p^{\,\prime})\times (K_n+p^{\,\prime}-1)} \\ 0 & \star \end{bmatrix} \, = \, \begin{bmatrix} I_{(m-p^{\,\prime}-1)} & 0 \\ 0 & \star' \end{bmatrix},
     \end{eqnarray*}
     where $\star$ and $\star'$ are suitable submatrices, and the last two equalities follow from the structure of $\wh S^{(T_n)}$ and $\wh D^{(q_\alpha)}$. Letting $W^{(p^{\,\prime}+1)}_{\alpha, T_\kappa}:=\star'$, we obtain the desired equality via induction.

(2) It follows from the definition of $F_{\alpha,T_\kappa}^{(1)}$ in (\ref{eqn:E_F1_matrices}) that $\big\|F_{\alpha,T_\kappa}^{(1)} \big\|_\infty \leq K_n$. Furthermore,  by~(\ref{eqn:def_F}) and the definitions of $\wh D^{(q_\alpha)}$, $\Xi_{\alpha,T_\kappa}^{(p)}$,  $\wh \Delta_{p,T_\kappa}$, and $\wh S^{(T_n)}$ given in~(\ref{eqn:Xi_matrix}),~(\ref{eqn:wt_Delta}), and (\ref{eqn:def_Swh}), we have, for each $p=1, \ldots, m-1$,
\begin{eqnarray*}
\big\|F_{\alpha,T_\kappa}^{(p+1)} \big\|_\infty & = &
 \left \|\wh D^{(q_\alpha)} \cdot \Xi^{(p)}_{\alpha, T_\kappa} \cdot F^{(p)}_{\alpha, T_\kappa} \cdot \wh \Delta_{p, T_\kappa} \cdot \wh S^{(T_n)} \right \|_\infty \\
& \leq &
 \big\|\wh D^{(q_\alpha)} \big\|_\infty \cdot \big \| \Xi^{(p)}_{\alpha, T_\kappa} \big\|_\infty \cdot \big \|F^{(p)}_{\alpha, T_\kappa} \big\|_\infty \cdot \big \|\wh \Delta_{p, T_\kappa}\big \|_\infty\cdot \big \|\wh S^{(T_n)} \big\|_\infty \\
   & \leq & 2 \cdot \frac{mK_n}{c_{\kappa,1}} \cdot \big \|F_{\alpha,T_\kappa}^{(p)} \big\|_\infty \cdot \max\left(1, \frac{c_{\kappa, 2}}{K_n} \right) \cdot  T_n, 
\end{eqnarray*}
where we use (\ref{eqn:knot_derivative_bd}) to bound $\big \| \Xi^{(p)}_{\alpha, T_\kappa} \big\|_\infty$ and $\big \|\wh \Delta_{p, T_\kappa}\big \|_\infty$ in the last inequality. In view of this result and $\|F_{\alpha,T_\kappa}^{(1)}\|_\infty \leq K_n$, the desired inequality follows.
\end{proof}

More properties of $F^{(m)}_{\alpha, T_\kappa}$ will be shown in Proposition~\ref{prop:F_times_B_T=B_alpha} and Corollary~\ref{coro:F_T_XiF_bounds}.

%
\subsection{Approximation of the matrix $\Lambda_{K_n, P, T_\kappa}$} \label{sect:approx_Lambda_matrix}

This subsection is concerned with the construction of certain matrices used to approximate $\Lambda_{K_n, P, T_\kappa}$.
For a given $K_n$, let $L_n \in \mathbb N$ with $ L_n >K_n/c_{\kappa, 1}$, which will be taken later to hold for all large $n$; see Property $\bf H$ in Section~\ref{sect:uniform_bds}.
For a given knot sequence $T_\kappa \in \Tcal_{K_n}$ of $(K_n+1)$ knots on $[0, 1]$ with $T_\kappa=\{ 0=\kappa_0<\kappa_1<\cdots<\kappa_{K_n}=1\}$, define the matrix $\wt E_{T_\kappa, L_n} \in \mathbb R^{K_n \times L_n}$ as
\begin{equation} \label{eqn:def_wtE}
  \big[ \wt E_{T_\kappa, L_n} \big]_{j \ell} \, := \, \mathbf I_{[\kappa_{j-1}, \kappa_{j})}\left( \frac{\ell-1}{L_n} \right), \qquad \forall \ j=1, \ldots, K_n, \quad \ell=1, \ldots, L_n,
\end{equation}
where $\mathbf I_{[\kappa_{j-1}, \kappa_{j})}$ is the indicator function on the interval $[\kappa_{j-1}, \kappa_{j})$. For each $j=1, \ldots, K_n$, let $\ell_j$ be the cardinality of the index set $\{\ell \in \mathbb N \, | \, L_n \kappa_{j-1} +1 \le \ell < L_n \kappa_j+1\}$.
Hence, we have
\[
 \wt E_{T_\kappa, L_n} =
\begin{bmatrix}
\ \onebld^T_{\ell_1}\; & &  & \\
  &\; \onebld^T_{\ell_2}\; && \\
  &		& \ddots\;&\\
  &		&& \onebld^T_{\ell_{K_n}}\;\\
\end{bmatrix} \in \mathbb R^{K_n \times L_n}.
\]
Let $L_n':=L_n+m-1$, and for each $p=1, \ldots, m$, define
\[
\Gamma_p \, := \,
\begin{bmatrix}
I_{(m-p)} & 0 \\
        0  & L_n^{-1} \cdot I_{(L_n+p-1)}
\end{bmatrix} \in \mathbb R^{L'_n\times L'_n}, \quad \text{ and } \quad \wt S^{(p)}_{L_n} := \Gamma_p \cdot \wh S^{(L'_n)} \in \mathbb R^{L'_n\times L'_n},
\]
where $\wh S^{(r)}$ is defined in (\ref{eqn:def_Swh}). We then define the matrices $X_{p, T_\kappa, L_n} \in \mathbb R^{T_n \times L'_n}$ for the given $T_\kappa$ and $L_n$ inductively as:
\begin{equation} \label{eqn:def_Xmatrix}
  X_{1, T_\kappa, L_n}  :=  \begin{bmatrix} I_{m-1} & 0 \\ 0 & \wt E_{T_\kappa, L_n} \end{bmatrix}, \ \ \text{ and } \ \ X_{p, T_\kappa, L_n}  := \wh D^{(T_n)} \cdot \wh \Delta^{-1}_{p-1, T_\kappa} \cdot X_{p-1, T_\kappa, L_n} \cdot \wt S^{(p-1)}_{L_n}, \ p=2, \ldots, m.
\end{equation}
 Note that $X_{1, T_\kappa, L_n}$ is of full row rank for any $T_\kappa, L_n$, and so is $X_{p, T_\kappa, L_n}$ for each $p=2, \ldots, m$, since $\wh D^{(T_n)}$, $\wh \Delta^{-1}_{p-1, T_\kappa}$, $\wh S^{(L'_n)}$, and $\Gamma_{p-1}$ are all invertible.
Finally, define the matrix
\begin{equation} \label{eqn:wt_Lambda}
   \wt \Lambda_{T_\kappa, K_n, L_n} \, := \, \frac{K_n}{L_n} \cdot \big( X_{m, T_\kappa, L_n}\big)_{1:T_n, 1:L_n} \cdot \left[ \big( X_{m, T_\kappa, L_n}\big)_{1:T_n, 1:L_n} \right]^T \in \mathbb R^{T_n \times T_n}.
\end{equation}
It will be shown later (cf. Proposition~\ref{prop:error_bd_Lamda}) that $\wt \Lambda_{T_\kappa, K_n, L_n}$ approximates $\Lambda_{K_n, P, T_\kappa}$ for all large $n$ when $L_n$ is suitably chosen.

As discussed in Section~\ref{sect:proof_roadmap}, the proof of the uniform Lipschitz property boils down to certain uniform bounds in $\ell_\infty$-norm, including that for $\|\big( \Xi'_\alpha  F_\alpha \Lambda_{K_n,P, T_\kappa} \, F_\alpha^T \big)^{-1} \|_\infty$, where $\Xi'_\alpha := K^{-1}_n \Xi^{(m)}_{\alpha, T_\kappa}$.
Therefore it is essential to know about $F_\alpha \Lambda_{K_n,P, T_\kappa} \, F_\alpha^T$, which is approximated by $F_\alpha \wt \Lambda_{ T_\kappa, K_n, L_n} \, F_\alpha^T$.
In view of the definition of $\wt \Lambda_{T_\kappa, K_n, L_n}$, we see that the latter matrix product is closely related to $F^{(m)}_{\alpha, T_\kappa} X_{m, T_\kappa, L_n}$ for a given index set $\alpha$, a knot sequence $T_k$, and $L_n$. In what follows, we show certain important properties of $F^{(m)}_{\alpha, T_\kappa} X_{m, T_\kappa, L_n}$ to be used in the subsequent development.

\begin{lemma}\label{lem:matrix_FtimesX}
 Fix $m \in \mathbb N$. For any given $\alpha$, $T_\kappa$, and $L_n$, the following hold:
 \begin{itemize}
   \item [(1)] For each $p=2, \ldots, m$, $F^{(p)}_{\alpha, T_\kappa} \cdot X_{p, T_\kappa, L_n} = \wh D^{(q_\alpha)} \cdot \Xi^{(p-1)}_{\alpha, T_\kappa} \cdot F^{(p-1)}_{\alpha, T_\kappa} \cdot X_{p-1, T_\kappa, L_n} \cdot \wt S^{(p-1)}_{L_n}$;
   \item [(2)] For each $p=1, \ldots, m-1$, there exists a matrix $Z_{p, \alpha, T_\kappa, L_n} \in \mathbb R^{(q_\alpha -m +p)\times (L_n+p-1)}$ such that
       \[
          F^{(p)}_{\alpha, T_\kappa} \cdot X_{p, T_\kappa, L_n} = \begin{bmatrix} I_{m-p} & 0 \\
            0 & Z_{p, \alpha, T_\kappa, L_n} \end{bmatrix} \in \mathbb R^{q_\alpha \times L'_n}.
       \]
 \end{itemize}
\end{lemma}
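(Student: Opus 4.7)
\emph{Part (1).} This part is purely algebraic: I would just substitute the recursive definitions of $F^{(p)}_{\alpha,T_\kappa}$ from~(\ref{eqn:def_F}) and $X_{p,T_\kappa,L_n}$ from~(\ref{eqn:def_Xmatrix}) into the left-hand side $F^{(p)}_{\alpha,T_\kappa}\cdot X_{p,T_\kappa,L_n}$, and observe two cancellations: the pair $\wh S^{(T_n)}\cdot\wh D^{(T_n)} = I_{T_n}$ (by definition of $\wh D^{(r)}$ in (\ref{eqn:def_Swh})) and the pair $\wh\Delta_{p-1,T_\kappa}\cdot\wh\Delta_{p-1,T_\kappa}^{-1} = I_{T_n}$. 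After these cancel, what remains is precisely the asserted expression $\wh D^{(q_\alpha)}\cdot\Xi^{(p-1)}_{\alpha,T_\kappa}\cdot F^{(p-1)}_{\alpha,T_\kappa}\cdot X_{p-1,T_\kappa,L_n}\cdot\wt S^{(p-1)}_{L_n}$. No induction is needed here.

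\emph{Part (2).} I would prove this by induction on $p$. For the base case $p=1$, the explicit forms $F^{(1)}_{\alpha,T_\kappa} = \bigl[\begin{smallmatrix}I_{m-1}&0\\0&E_{\alpha,T_\kappa}\end{smallmatrix}\bigr]$ and $X_{1,T_\kappa,L_n} = \bigl[\begin{smallmatrix}I_{m-1}&0\\0&\wt E_{T_\kappa,L_n}\end{smallmatrix}\bigr]$ immediately give $F^{(1)}_{\alpha,T_\kappa}\cdot X_{1,T_\kappa,L_n} = \bigl[\begin{smallmatrix}I_{m-1}&0\\0&E_{\alpha,T_\kappa}\wt E_{T_\kappa,L_n}\end{smallmatrix}\bigr]$, so $Z_{1,\alpha,T_\kappa,L_n}:=E_{\alpha,T_\kappa}\wt E_{T_\kappa,L_n}$, whose dimensions match $(|\ol\alpha|+1)\times L_n$ as required. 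For the inductive step, suppose the claim holds for $p\leq m-2$, so that $F^{(p)}_{\alpha,T_\kappa}\cdot X_{p,T_\kappa,L_n}$ has block-diagonal form with leading block $I_{m-p}$ and trailing block $Z_{p,\alpha,T_\kappa,L_n}$. Using part (1), I can write $F^{(p+1)}_{\alpha,T_\kappa}\cdot X_{p+1,T_\kappa,L_n} = \wh D^{(q_\alpha)}\cdot\Xi^{(p)}_{\alpha,T_\kappa}\cdot \bigl[\begin{smallmatrix}I_{m-p}&0\\0&Z_{p,\alpha,T_\kappa,L_n}\end{smallmatrix}\bigr]\cdot\Gamma_p\cdot\wh S^{(L'_n)}$. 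The first two factors from the right then combine with the central block matrix to give $\bigl[\begin{smallmatrix}I_{m-p}&0\\0&L_n^{-1}\Sigma_{p,T_\kappa}Z_{p,\alpha,T_\kappa,L_n}\end{smallmatrix}\bigr]$ since $\Xi^{(p)}_{\alpha,T_\kappa}$ and $\Gamma_p$ share the same $(m-p),(L'_n-m+p)$ block-diagonal pattern.

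The subtle step is then to multiply this block-diagonal matrix on the right by $\wh S^{(L'_n)}$ and on the left by $\wh D^{(q_\alpha)}$, and repartition the outcome with the new block boundary at position $m-p-1$ instead of $m-p$. Decomposing $\wh S^{(L'_n)} = \bigl[\begin{smallmatrix}\wh S^{(m-p)} & \onebld_{(m-p)\times(L_n+p-1)} \\ 0 & \wh S^{(L_n+p-1)}\end{smallmatrix}\bigr]$ and $\wh D^{(q_\alpha)} = \bigl[\begin{smallmatrix}\wh D^{(m-p)} & -e_{m-p} e_1^T \\ 0 & \wh D^{(q_\alpha-m+p)}\end{smallmatrix}\bigr]$ compatibly, the product simplifies because $\wh D^{(m-p)}\wh S^{(m-p)}=I_{m-p}$ and $\wh D^{(m-p)}\onebld_{m-p} = e_{m-p}$, leaving a matrix whose top-left $(m-p)\times(m-p)$ block is still $I_{m-p}$ but whose only nonzero entries in the ``extra'' row and column (row $m-p$ past column $m-p$, and nothing below) all lie within the shifted bottom-right block. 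Repartitioning so that the leading identity block is $I_{m-p-1}$ yields the claim with an explicit $Z_{p+1,\alpha,T_\kappa,L_n}$ of the required dimensions $(|\ol\alpha|+p+1)\times(L_n+p)$.

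The main obstacle is bookkeeping: keeping the two block partitions (sizes $(m-p)$ versus $(m-p-1)$) consistent through the four-factor product and correctly identifying which entries of $\wh D^{(q_\alpha)}$ couple the two blocks, so that after cancellation the ``off-diagonal'' coupling contributes only to the new bottom-right block $Z_{p+1,\alpha,T_\kappa,L_n}$ rather than spoiling the identity pattern. No delicate analytic estimate is involved; the difficulty is purely combinatorial/structural.
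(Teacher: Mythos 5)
Your proposal is correct and follows essentially the same route as the paper: part (1) by direct substitution of the recursions with the cancellations $\wh S^{(T_n)}\wh D^{(T_n)}=I$ and $\wh\Delta_{p-1,T_\kappa}\wh\Delta_{p-1,T_\kappa}^{-1}=I$, and part (2) by induction with $Z_{1,\alpha,T_\kappa,L_n}=E_{\alpha,T_\kappa}\wt E_{T_\kappa,L_n}$ and the block-structure argument for $\wh D^{(q_\alpha)}\cdot(\cdot)\cdot\wh S^{(L'_n)}$. Your explicit block decompositions of $\wh S^{(L'_n)}$ and $\wh D^{(q_\alpha)}$ just spell out the bookkeeping that the paper compresses into "the last two equalities follow from the structure of $\wh S^{(L'_n)}$ and $\wh D^{(q_\alpha)}$."
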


\begin{proof}
  (1) It follows from the definitions of $F^{(p)}_{\alpha, T_\kappa}$ in (\ref{eqn:def_F}) and $X_{p, T_\kappa, L_n}$ in (\ref{eqn:def_Xmatrix})  respectively and $\wh S^{(T_n)} \cdot \wh D^{(T_n)}=I$  that for each $p=2, \ldots, m$,
  \begin{align*}
   F^{(p)}_{\alpha, T_\kappa} \cdot X_{p, T_\kappa, L_n} & \, = \, \left( \wh D^{(q_\alpha)} \cdot \Xi^{(p-1)}_{\alpha, T_\kappa} \cdot F^{(p-1)}_{\alpha, T_\kappa} \cdot \wh \Delta_{p-1, T_\kappa} \cdot \wh S^{(T_n)}\right) \cdot \left( \wh D^{(T_n)} \cdot \wh \Delta^{-1}_{p-1, T_\kappa} \cdot X_{p-1, T_\kappa, L_n} \cdot \wt S^{(p-1)}_{L_n}  \right) \\
   &\, = \, \wh D^{(q_\alpha)} \cdot \Xi^{(p-1)}_{\alpha, T_\kappa} \cdot F^{(p-1)}_{\alpha, T_\kappa} \cdot X_{p-1, T_\kappa, L_n} \cdot \wt S^{(p-1)}_{L_n}.
  \end{align*}

 (2) When $p=1$, it is easily seen that
     \[
         F^{(1)}_{\alpha, T_\kappa} \cdot X_{1, T_\kappa, L_n} = \begin{bmatrix} I_{m-1} & 0 \\ 0 & E_{\alpha, T_\kappa} \end{bmatrix} \, \begin{bmatrix} I_{m-1} & 0 \\ 0 & \wt E_{T_\kappa, L_n} \end{bmatrix} = \begin{bmatrix} I_{m-1} & 0 \\ 0 & Z_{1, \alpha, T_\kappa, L_n} \end{bmatrix},
     \]
     where $Z_{1, \alpha, T_\kappa, L_n} := E_{\alpha, T_\kappa} \cdot \wt E_{T_\kappa, L_n}$. Hence, statement (2) holds for $p=1$. Suppose statement (2) holds for $p=1, \ldots, p^{\,\prime}$, and consider $p^{\,\prime}+1$. In view of statement (1),  the definitions of $\Gamma_{p^{\, \prime}}$ and $\wt S^{(p')}_{L_n}$, (\ref{eqn:Xi_matrix}) for $\Xi^{(p^{\,\prime})}_{\alpha, T_\kappa}$, and the induction hypothesis, we have
     \begin{align*}
      \lefteqn{ F^{(p^{\,\prime}+1)}_{\alpha, T_\kappa} \cdot X_{p^{\,\prime}+1, T_\kappa, L_n} \, = \, \wh D^{(q_\alpha)} \cdot \Xi^{(p^{\,\prime})}_{\alpha, T_\kappa} \cdot F^{(p^{\,\prime})}_{\alpha, T_\kappa} \cdot X_{p^{\,\prime}, T_\kappa, L_n} \cdot \wt S^{(p^{\,\prime})}_{L_n} } \\
       & \, = \, \wh D^{(q_\alpha)} \cdot \Xi^{(p^{\,\prime})}_{\alpha, T_\kappa} \cdot F^{(p^{\,\prime})}_{\alpha, T_\kappa} \cdot X_{p^{\,\prime}, T_\kappa, L_n} \cdot \Gamma_{p^{\,\prime}} \cdot \wh S^{L'_n} \\
       & \, = \, \wh D^{(q_\alpha)} \cdot \begin{bmatrix} I_{m-p^{\,\prime}} & 0 \\ 0 & \Sigma_{ p^{\,\prime}, T_\kappa } \end{bmatrix} \cdot \begin{bmatrix} I_{m-p^{\,\prime}} & 0 \\ 0 & Z_{ p^{\,\prime}, \alpha, T_\kappa,L_n } \end{bmatrix} \cdot \begin{bmatrix} I_{m-p^{\,\prime}} & 0 \\ 0 & L^{-1}_n \cdot I_{L_n +p^{\,\prime}-1} \end{bmatrix} \cdot \wh S^{(L'_n)} \\
       & \, = \,  \wh D^{(q_\alpha)} \cdot  \begin{bmatrix} \wh S^{(m-p^{\,\prime})} & \mathbf 1_{(m-p^{\,\prime})\times (L_n+p^{\,\prime}-1)} \\ 0 & \star \end{bmatrix} \, = \, \begin{bmatrix} I_{(m-p^{\,\prime}-1)} & 0 \\ 0 & \star' \end{bmatrix},
     \end{align*}
     where $\star$ and $\star'$ are suitable submatrices, and the last two equalities follow from the structure of $\wh S^{(L'_n)}$ and $\wh D^{(q_\alpha)}$. Letting $Z_{p^{\,\prime}+1, \alpha, T_\kappa, L_n}:=\star'$, we arrive at the desired equality via induction.
\end{proof}

In what follows, we develop an inductive formula to compute $Z_{p, \alpha, T_\kappa, L_n}$. For notational simplicity, we use $Z_p$, $Y_p$, and $\tau_s$ in place of $Z_{p, \alpha, T_\kappa, L_n}$, $F^{(p)}_{\alpha, T_\kappa} \cdot X_{p, T_\kappa, L_n}$, and $\tau_{\alpha, T_\kappa, s}$ (cf. (\ref{eqn:tau_k})) respectively for fixed $\alpha$, $T_\kappa$, and $L_n$. For each $p=2, \ldots, m$, it follows from statement (1) of Lemma~\ref{lem:matrix_FtimesX} that for any $j=1, \ldots, |\ol\alpha|+p$ and $k=1, \ldots, L_n+p-1$,
\[
   \big[ Z_p \big]_{j, k} =  \left[ \wh D^{(q_\alpha)} \cdot \Xi^{(p-1)}_{\alpha, T_\kappa}\cdot Y_{p-1} \cdot \wt S^{(p-1)}_{L_n} \right]_{j+m-p, \, k+m-p}  = \left[ \wh D^{(q_\alpha)} \Xi^{(p-1)}_{\alpha, T_\kappa} \right]_{(j+m-p) \bullet} \cdot  \left[Y_{p-1}  \wt S^{(p-1)}_{L_n} \right]_{\bullet (k+m-p)}.
\]
In light of (\ref{eqn:Xi_matrix}), we have, for any $j=1,\ldots, |\ol\alpha|+p$,
\[
\left(\wh D^{(q_\alpha)}  \cdot \Xi^{(p-1)}_{\alpha, T_\kappa} \right)_{(j+m-p) \bullet}  =
\begin{cases}
\left(\underbrace{0,\ldots,0}_{m-p}, \, 1,-\frac{p-1}{\tau_{1}-\tau_{2-p}}, \, \underbrace{0,\ldots,0}_{|\ol \alpha|+p-2}\right)   & \text{ if }\ j = 1 \\
\left(\underbrace{0,\ldots,0}_{j+m-p-1},\, \frac{p-1}{\tau_{j-1}-\tau_{j-p}},-\frac{p-1}{\tau_{j}  - \tau_{j-p+1} }, \, \underbrace{0,\ldots,0}_{|\ol\alpha|-j+p-1}\right)   & \text{ if }\  1 < j < |\ol\alpha|+p\\
\left(\underbrace{0,\ldots,0}_{|\ol\alpha|+m-1}, \, \frac{p-1}{\tau_{|\ol\alpha|+p-1}-\tau_{|\ol\alpha|}}\right)   & \text{ if } \ j =|\ol\alpha|+p.
\end{cases}
\]
Moreover, by the virtue of Lemma~\ref{lem:matrix_FtimesX}, we have
\begin{align*}
   Y_{p-1} \cdot \wt S^{(p-1)}_{L_n} & \, = \,  Y_{p-1} \cdot \Gamma_{p-1} \cdot \wh S^{(L'_n)} = \begin{bmatrix} I_{m-p+1} & 0 \\ 0 & \frac{Z_{p-1}}{L_n} \end{bmatrix} \cdot \wh S^{(L'_n)}  \\
   & \, = \,
   \begin{bmatrix} I_{m-p} & 0 & 0 \\ 0 & 1 & 0 \\ 0 & 0 & \frac{Z_{p-1}}{L_n} \end{bmatrix} \cdot \wh S^{(L'_n)}
    \, = \, \begin{bmatrix} \vspace{0.2cm} \wh S^{(m-p)} & \mathbf 1_{m-p} & \mathbf 1_{(m-p)\times (L_n+p-2)} \\ \vspace{0.2cm} 0 & 1 & \mathbf 1^T_{L_n+p-2} \\ 0 & 0 & \frac{Z_{p-1} \cdot \wh S^{(L_n+p-2)}}{L_n} \end{bmatrix}.
\end{align*}
This shows in particular that for each $j=2, \ldots, |\ol\alpha|+p$ and $k=2, \ldots, L_n+p-1$,
\[
 \left[ Y_{p-1} \cdot \wt S^{(p-1)}_{L_n} \right]_{j+m-p, \, k+m-p} \, = \, L^{-1}_n \left[ Z_{p-1} \cdot \wh S^{(L_n+p-2)} \right]_{j-1, \, k-1} \, = \,  L^{-1}_n \cdot \sum^{k-1}_{\ell=1} \big[ Z_{p-1} \big]_{j-1,\, \ell}.
\]
Combining the above results, we have,  for any $p \ge 2$, $j=1, \ldots, |\ol\alpha|+p$, and $k=1, \ldots, L_n+p-1$,
\begin{align}
\lefteqn{ \big[ Z_p \big]_{j, \, k} \, = \, \sum^{q_\alpha}_{s=1} \Big[ \wh D^{(q_\alpha)} \cdot \Xi^{(p-1)}_{\alpha, T_\kappa} \Big]_{j+m-p, \, s} \cdot \Big[ Y_{p-1}\cdot \wt S^{(p-1)}_{L_n} \Big]_{s, \, k+m-p}  }  \label{eqn:Zp_entry} \\
&
= \begin{cases}
\delta_{j, 1}  & \text{ if } \ k =1\\
\displaystyle 1-\frac{p-1}{L_n(\tau_1-\tau_{2-p})}  \sum_{\ell = 1}^{k-1}  \big [ Z_{p-1} \big]_{1, \ell}  & \text{ if } \ j=1, \text{ and } k>1 \\
\vspace{0.15cm}
\displaystyle \frac{p-1}{L_n(\tau_{j-1}-\tau_{j-p})}  \sum_{\ell =1}^{k-1}  \big[ Z_{p-1} \big]_{j-1,\ell} - \frac{p-1}{L_n(\tau_{j}-\tau_{j-p+1})}
\sum_{\ell = 1}^{k-1}  \big[ Z_{p-1} \big]_{j, \ell}   & \text{ if } \  1 < j< |\ol\alpha|+p, \text{ and } k> 1\\
\displaystyle \frac{p-1}{L_n(\tau_{|\ol\alpha|+p-1}-\tau_{|\ol\alpha|})}   \sum_{\ell = 1}^{k-1}  \big[ Z_{p-1} \big]_{|\ol \alpha|+p-1, \ell} &  \text{ if } \ j = |\ol \alpha|+p, \text{ and } k >1.
\end{cases} \nonumber
\end{align}
The above results for $Z_{p, \alpha, T_\kappa, L_n}$ will be exploited to establish uniform bounds for the uniform Lipschitz property shown in the next subsection (cf. Proposition~\ref{prop:Z_approx_Bspline}).

%
\subsection{Preliminary uniform bounds} \label{sect:uniform_bds}

This subsection establishes uniform bounds and uniform approximation error bounds of several of the constructed matrices.
These results lay a solid foundation for the proof of Theorem~\ref{thm:uniform_Lip}.

The first result  of this subsection (cf. Proposition~\ref{prop:Z_approx_Bspline}) shows that
the entries of each row of $Z_{p, \alpha, T_\kappa, L_n}$ introduced in Lemma~\ref{lem:matrix_FtimesX} are sufficiently close to the corresponding values of a B-spline defined on a certain knot sequence for large $L_n$. Hence, each row of $Z_{p, \alpha, T_\kappa, L_n}$ can be approximated by an appropriate B-spline; more importantly, the approximation error is shown to be uniformly bounded, regardless of $\alpha$ and $T_\kappa$. This result forms a cornerstone for many critical uniform bounds in the proof of the uniform Lipschitz property.

Recall in Section~\ref{sect:construction_F} that for a given $K_n\in \mathbb N$, a knot sequence $T_\kappa \in \mathcal T_{K_n}$,  and the index set $\alpha \subseteq \{1, \ldots, K_n-1\}$  defined in (\ref{eqn:index_alpha}), the complement of $\alpha $ is given by $\ol{\alpha} = \{i_1,\ldots,i_{|\ol{\alpha}|}\}$ with $1 \leq i_1 < \cdots < i_{|\ol{\alpha}|} \leq K_n-1$.
For the given $\ol\alpha$ and $T_\kappa$, define the knot sequence with the usual extension $\kappa_{i_s}=0$ for $s<0$ and $\kappa_{i_s}=1$ for $s > |\ol\alpha|+1$:
\begin{equation} \label{eqn:V_knot_seq}
    V_{\alpha, T_\kappa} \, := \, \Big\{ 0=\kappa_{i_0}<\kappa_{i_1}<\kappa_{i_2}< \cdots < \kappa_{i_{|\ol\alpha|}}< \kappa_{ i_{|\ol\alpha|+1}}=1 \Big\}.
\end{equation}
Let $\big\{ B^{V_{\alpha, T_\kappa}}_{p, j} \big\}^{|\ol\alpha|+p}_{j=1}$ be the B-splines of order $p$ on $[0, 1]$ defined by $V_{\alpha, T_\kappa}$. With this notation, we present the following proposition.

%
\begin{proposition}\label{prop:Z_approx_Bspline}
Given $K_n, L_n\in \mathbb N$, let $M_n \in \mathbb N$ with $M_n \ge m \cdot K_n/{c_{\kappa, 1}}$.
Then for each $p=1, \ldots, m$,
any $T_\kappa \in \Tcal_{K_n}$, any index set $\alpha$, 
any $j=1, \ldots, |\ol\alpha|+p$, and any $k=1,\ldots, L_n$,
\[
\left| \big[ Z_{p, \alpha, T_\kappa, L_n} \big]_{j, \, k} - B^{V_{\alpha, T_\kappa}}_{p, j}\left( \frac{k-1}{L_n} \right) \right| \, \leq \, 6\cdot \big(2^{p-1}-1 \big) \cdot \frac{\big(M_n\big)^{p-1}}{L_n}, \qquad \forall \ n \in \mathbb N.
\]
\end{proposition}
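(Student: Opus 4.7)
The plan is to induct on $p$. For the base case $p=1$, one reads off from (\ref{eqn:def_Xmatrix}) that $Z_{1,\alpha,T_\kappa,L_n}=E_{\alpha,T_\kappa}\,\wt E_{T_\kappa,L_n}$, whose $(j,k)$-entry equals $\Ibld_{[\kappa_{i_{j-1}},\kappa_{i_j})}((k-1)/L_n)=B^{V_{\alpha,T_\kappa}}_{1,j}((k-1)/L_n)$. Since the right-hand side of the claimed bound is $0$, this holds with equality.

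For the inductive step, the crucial structural observation is that the recurrence (\ref{eqn:Zp_entry}) is a discretization of the integrated B-spline recurrence: integrating (\ref{eqn:B_spline_derivative}) applied to $V_{\alpha,T_\kappa}$ between $0$ and $(k-1)/L_n$ and using $B^{V_{\alpha,T_\kappa}}_{p,j}(0)=\delta_{j,1}$ (valid because $0$ has multiplicity $p$ in the extended knot sequence) yields, with $\tau_s:=\tau_{\alpha,T_\kappa,s}$,
\[
B^{V_{\alpha,T_\kappa}}_{p,j}\!\bigl(\tfrac{k-1}{L_n}\bigr)=\delta_{j,1}+\frac{p-1}{\tau_{j-1}-\tau_{j-p}}\int_0^{(k-1)/L_n}\!\!B^{V_{\alpha,T_\kappa}}_{p-1,j-1}(t)\,dt-\frac{p-1}{\tau_j-\tau_{j-p+1}}\int_0^{(k-1)/L_n}\!\!B^{V_{\alpha,T_\kappa}}_{p-1,j}(t)\,dt,
\]
with the natural boundary forms when $j=1$ or $j=|\ol\alpha|+p$. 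At $k=1$ both sides equal $\delta_{j,1}=[Z_p]_{j,1}$. For $k>1$, subtracting (\ref{eqn:Zp_entry}) term-by-term reduces the claim to bounding the discrete-vs-integral errors
\[
\Delta_{j'}\,:=\,\frac{1}{L_n}\sum_{\ell=1}^{k-1}[Z_{p-1}]_{j',\ell}-\int_0^{(k-1)/L_n}B^{V_{\alpha,T_\kappa}}_{p-1,j'}(t)\,dt,\qquad j'\in\{j-1,j\},
\]
each multiplied by a coefficient $(p-1)/(\tau_{j'-1}-\tau_{j'-p})\leq mK_n/c_{\kappa,1}\leq M_n$ via (\ref{eqn:knot_derivative_bd}).

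When $p\geq 3$, apply Lemma~\ref{lem:funct_bound} with $f=B^{V_{\alpha,T_\kappa}}_{p-1,j'}$ (continuous and piecewise smooth since $p-1\geq 2$), $v_\ell=[Z_{p-1}]_{j',\ell}$, $s_\ell=\ell/L_n$, $\varrho=1/L_n$: the induction hypothesis supplies $\mu_1=6(2^{p-2}-1)M_n^{p-2}/L_n$, and (\ref{eqn:B_spline_derivative}) with (\ref{eqn:knot_derivative_bd}) gives $\mu_2\leq 2M_n$ (two terms each bounded by $M_n\cdot 1$). The lemma yields $|\Delta_{j'}|\leq 6(2^{p-2}-1)M_n^{p-2}/L_n+3M_n/L_n$; multiplying by $M_n$ and summing the two contributions produces $12(2^{p-2}-1)M_n^{p-1}/L_n+6M_n^2/L_n\leq 6(2^{p-1}-1)M_n^{p-1}/L_n$, using $M_n\geq 1$ and $M_n^2\leq M_n^{p-1}$ for $p\geq 3$. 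The case $p=2$ requires a separate argument because $B^{V_{\alpha,T_\kappa}}_{1,j'}=\Ibld_{[\kappa_{i_{j'-1}},\kappa_{i_{j'}})}$ is discontinuous, making Lemma~\ref{lem:funct_bound} inapplicable; however, the left Riemann sum of an indicator on a uniform grid satisfies the elementary bound $|\Delta_{j'}|\leq 2/L_n$ (at most one grid cell of error at each endpoint of the support), so the total error at $p=2$ is at most $4M_n/L_n\leq 6M_n/L_n$ as required. The main obstacle is the careful constant bookkeeping in the induction, combined with the separate handling of $p=2$ and of the boundary indices $j=1$ and $j=|\ol\alpha|+p$ in (\ref{eqn:Zp_entry}).
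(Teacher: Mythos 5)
Your proposal is correct and follows essentially the same route as the paper: induction on $p$ with the exact identity at $p=1$, the integrated form of (\ref{eqn:B_spline_derivative}) turning the claim into a bound on the Riemann-sum-versus-integral errors $\Delta_{j'}$, Lemma~\ref{lem:funct_bound} for the smooth case, and a separate elementary indicator argument where the order-$1$ splines are discontinuous (the paper's $\ppm=1$ bound (\ref{eqn:bound_for_p=1})). The only difference is cosmetic constant bookkeeping: the paper takes $\mu_2 = 2M_n^{\,\ppm-1}$ from the outset, whereas you take $\mu_2 = 2M_n$ and absorb $M_n^2 \le M_n^{p-1}$ afterward, arriving at the same constant $6(2^{p-1}-1)$.
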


%
%
\begin{proof}
We prove this result by induction on $p$. Given arbitrary $K_n, L_n \in \mathbb N$, $T_\kappa \in \Tcal_{K_n}$, and $\alpha$ defined in (\ref{eqn:index_alpha}),
%
%
we use $Z_p$ to denote $Z_{p, \alpha, T_\kappa, L_n}$ to simplify notation.
Consider $p = 1$ first. It follows from the proof of statement (2) of Lemma~\ref{lem:matrix_FtimesX} that $Z_{1} = E_{\alpha, T_\kappa} \cdot \wt E_{T_\kappa, L_n}$. In view of the definitions of $E_{\alpha, T_\kappa}$ and $\wt E_{T_\kappa, L_n}$ in (\ref{eqn:E_F1_matrices}) and (\ref{eqn:def_wtE}) respectively, we have, for any $j=1, \ldots, |\ol\alpha|+1$ and $k=1, \ldots, L_n$,
\begin{align}
  \big [ Z_1 \big]_{j, \, k} & \, = \, \sum^{K_n}_{\ell=1} \big[ E_{\alpha, T_\kappa} \big]_{j, \,\ell} \cdot \big[ \wt E_{ T_\kappa, L_n} \big]_{\ell, \, k} \, = \, \sum^{K_n}_{\ell=1} \mathbf I_{[\kappa_{i_{j-1}}, \kappa_{i_j})} \left( \kappa_{\ell-1} \right) \cdot \mathbf I_{[\kappa_{\ell-1}, \kappa_\ell)} \left( \frac{k-1}{L_n} \right) \notag \\
   & \, = \, \mathbf I_{[\kappa_{i_{j-1}}, \kappa_{i_j})}\left( \frac{k-1}{L_n} \right) \, = \, B^{V_{\alpha, T_\kappa}}_{1, j}\left( \frac{k-1}{L_n} \right). \label{eqn:Z1_equal_B1}
\end{align}
This shows that the proposition holds for $p=1$.

Suppose that the proposition holds for $p=1, \ldots, \ppm$ with $1\le \ppm \le m-1$. Consider $p=\ppm+1$  now.  Define
\begin{equation}\label{eqn:theta_p}
  \theta_{\ppm, \alpha, T_\kappa, L_n} \, := \, 2 M_n \, \max_{j=1, \ldots, |\ol\alpha|+\ppm} \, \left( \, \max_{k=2, \ldots, L_n} \left|\sum_{\ell=1}^{k-1} \frac{1}{L_n} \big [Z_{\ppm} \big]_{j, \, \ell} - \int_{0}^{\frac{k-1}{L_n}} B^{V_{\alpha, T_\kappa}}_{\ppm, j}(x)\,dx \right| \, \right).
\end{equation}
We show below that $\big| [ Z_{\ppm+1} ]_{j, k} - B^{V_{\alpha, T_\kappa}}_{\ppm+1, j}(\frac{k-1}{L_n}) \big | \leq \theta_{\ppm, \alpha, T_\kappa, L_n}$ for any $j=1, \ldots, |\ol\alpha|+\ppm+1$ and $k=1, \ldots, L_n$. To show this, consider the following cases via the entry formula of $Z_{\ppm+1}$ in (\ref{eqn:Zp_entry}):
\begin{itemize}
  \item [(i)]
 $k=1$ and $j=1, \ldots,  |\ol\alpha|+\ppm+1$. In view of (\ref{eqn:Zp_entry}), we have $[ Z_{\ppm+1} ]_{j, 1} = \delta_{j, 1} = B^{V_{\alpha, T_\kappa}}_{\ppm+1, j}(0)$ for each $j$. This implies that $\big| [ Z_{\ppm+1} ]_{j, 1} - B^{V_{\alpha, T_\kappa}}_{\ppm+1, j}(0) \big| =0 \le \theta_{\ppm, \alpha, T_\kappa, L_n}$ for each $j$.

\item [(ii)]
$j=1$ and $k=2, \ldots, L_n$. It follows from (\ref{eqn:B_spline_derivative}) that
\[
    B^{V_{\alpha, T_\kappa}}_{\ppm+1, 1}(x) =  B^{V_{\alpha, T_\kappa}}_{\ppm+1, 1}(0) - \frac{\ppm}{\kappa_{i_1}- \kappa_{i_{1-\ppm}} } \int^x_0 B^{V_{\alpha, T_\kappa}}_{\ppm, 1}(t) dt
    = 1 - \frac{\ppm}{\kappa_{i_1}- \kappa_{i_{1-\ppm}} } \int^x_0 B^{V_{\alpha, T_\kappa}}_{\ppm, 1}(t) dt.
\]
Since each $\tau_s$ in (\ref{eqn:Zp_entry}) is $\tau_{\alpha, T_\kappa, s}$ defined in (\ref{eqn:tau_k}), we have $\tau_1=\kappa_{i_1}$ and $\tau_{2-\ppm-1}=\kappa_{i_{1-\ppm}}$. Hence
\[
  \left | \big[ Z_{\ppm+1} \big]_{1, k} - B^{V_{\alpha, T_\kappa}}_{\ppm+1, 1}\Big(\frac{k-1}{L_n} \Big) \right | =
  \frac{\ppm}{\kappa_{i_1}- \kappa_{i_{1-\ppm} } } \left| \sum^{k-1}_{\ell=1} \frac{1}{L_n} \big[ Z_{\ppm} \big]_{1, \ell} -   \int^{\frac{k-1}{L_n}}_0 B^{V_{\alpha, T_\kappa}}_{\ppm, 1}(t) dt \right| \, \le \, \theta_{\ppm, \alpha, T_\kappa, L_n},
\]
where the last inequality is, via (\ref{eqn:knot_derivative_bd}), due to  $\frac{\ppm}{\kappa_{i_s}- \kappa_{i_{s-\ppm}} } \le \ppm\cdot K_n/c_{\kappa, 1} \le M_n$ for any $s$.
\item [(iii)]
 $j=2, \ldots, |\ol\alpha|+\ppm$ and $k=2, \ldots, L_n$. It follows from the integral form of (\ref{eqn:B_spline_derivative}) that
\[
  B^{V_{\alpha, T_\kappa}}_{\ppm+1, j}(x) \,  = \, \frac{\ppm}{\kappa_{i_{j-1}}- \kappa_{i_{j-\ppm-1} } } \int^x_0 B^{V_{\alpha, T_\kappa}}_{\ppm, j-1}(t) dt
   - \frac{\ppm}{\kappa_{i_j}- \kappa_{i_{j-\ppm} } } \int^x_0 B^{V_{\alpha, T_\kappa}}_{\ppm, j}(t) dt.
\]
 Using this equation, (\ref{eqn:Zp_entry}) and (\ref{eqn:theta_p}),
 and a similar argument as in Case (ii), we have
\begin{align*}
  \left | \big[ Z_{\ppm+1} \big]_{j, k} - B^{V_{\alpha, T_\kappa}}_{\ppm+1, j}\Big(\frac{k-1}{L_n} \Big) \right | & \, \le \,
   \frac{\ppm}{\kappa_{i_{j-1}}- \kappa_{i_{j-\ppm-1}} } \left| \sum^{k-1}_{\ell=1} \frac{1}{L_n} \big[ Z_{\ppm} \big]_{j-1, \ell} -   \int^{\frac{k-1}{L_n}}_0 B^{V_{\alpha, T_\kappa}}_{\ppm, j-1}(t) dt \right| \\
 & \qquad  + \,
  \frac{\ppm}{\kappa_{i_j}- \kappa_{i_{j-\ppm}} } \left| \sum^{k-1}_{\ell=1} \frac{1}{L_n} \big[ Z_{\ppm} \big]_{j, \ell} -   \int^{\frac{k-1}{L_n}}_0 B^{V_{\alpha, T_\kappa}}_{\ppm, j}(t) dt \right| \\
  & \, \le \, \theta_{\ppm, \alpha, T_\kappa, L_n}.
\end{align*}
\item [(iv)]
 $j=|\ol\alpha|+\ppm+1$ and $k=2, \ldots, L_n$. Similarly, we have from (\ref{eqn:B_spline_derivative}) that
\[
    B^{V_{\alpha, T_\kappa}}_{\ppm+1, |\ol\alpha|+\ppm+1}(x) \, = \, \frac{\ppm}{\kappa_{i_{|\ol\alpha|+\ppm}}- \kappa_{i_{|\ol\alpha|}} } \int^x_0 B^{V_{\alpha, T_\kappa}}_{\ppm, |\ol\alpha|+\ppm}(t) dt.
\]
This, along with (\ref{eqn:Zp_entry}) and a similar argument as in Case (ii), leads to
\begin{align*}
\lefteqn {\left | \big[ Z_{\ppm+1} \big]_{|\ol\alpha|+\ppm+1, k} - B^{V_{\alpha, T_\kappa}}_{\ppm+1, |\ol\alpha|+\ppm+1}\Big(\frac{k-1}{L_n} \Big) \right | } \\
 & \, = \
  \frac{\ppm}{\kappa_{i_{|\ol\alpha|+\ppm}}- \kappa_{i_{|\ol\alpha|}} }
  \left| \sum^{k-1}_{\ell=1} \frac{1}{L_n} \big[ Z_{\ppm} \big]_{|\ol\alpha|+\ppm, \ell} -   \int^{\frac{k-1}{L_n}}_0 B^{V_{\alpha, T_\kappa}}_{\ppm, |\ol\alpha|+\ppm}(t) dt \right|
   \ \le \ \theta_{\ppm, \alpha, T_\kappa, L_n}. \qquad \qquad \qquad
\end{align*}
\end{itemize}
This shows that $\big| [ Z_{\ppm+1} ]_{j, k} - B^{V_{\alpha, T_\kappa}}_{\ppm+1, j}(\frac{k-1}{L_n}) \big |\le \theta_{\ppm, \alpha, T_\kappa, L_n}$ for any $j=1, \ldots, |\ol\alpha|+\ppm+1$ and $k=1, \ldots, L_n$.

Finally, we show that the upper bound $\theta_{\ppm, \alpha, T_\kappa, L_n}$ attains the specified uniform bound, regardless of $\alpha$ and $T_\kappa$.
 When $\ppm =1$, in light of (\ref{eqn:Z1_equal_B1}) and $B^{V_{\alpha, T_\kappa}}_{\ppm, j}(x)=\mathbf I_{[\kappa_{i_{j-1}}, \kappa_{i_j} )}(x)$ on $[0, 1)$, we derive via straightforward computation that for any $j=1, \ldots, |\ol\alpha|+\ppm$ and each $k=2, \ldots, L_n$,
 \begin{equation} \label{eqn:bound_for_p=1}
    \left|\sum_{\ell=1}^{k-1} \frac{1}{L_n} \big [Z_{\ppm} \big]_{j, \, \ell} - \int_{0}^{\frac{k-1}{L_n}} B^{V_{\alpha, T_\kappa}}_{\ppm, j}(x)\,dx \right| \, \le \, \frac{1}{L_n}.
 \end{equation}
 This implies that $\theta_{\ppm, \alpha, T_\kappa, L_n} \le 2 M_n/L_n \le 6 \cdot (2^{\ppm}-1) \cdot (M_n)^{\ppm}/L_n$. In what follows, consider $2\le \ppm \le m-1$.
 Letting $C_{\ppm}:=6\cdot(2^{\ppm-1}-1)$, the induction hypothesis states that  $\big| [ Z_{\ppm} ]_{j, k} - B^{V_{\alpha, T_\kappa}}_{\ppm, j}(\frac{k-1}{L_n}) \big | \le C_{\ppm} \cdot (M_n)^{\ppm-1}/L_n$ for any $j=1, \ldots, |\ol\alpha|+\ppm$ and $k=1, \ldots, L_n$. Moreover, for each $j$, the B-spline  $B^{V_{\alpha, T_\kappa}}_{\ppm, j}$ is continuous on $[0, 1]$ and is differentiable  except at (at most) finitely many points in $[0, 1]$.
 By the derivative formula (\ref{eqn:B_spline_derivative}), we have, for any $x\in [0, 1]$ where the derivative exists,
\begin{eqnarray}
  \left| \Big( B^{V_{\alpha, T_\kappa}}_{\ppm, j}(x) \Big)' \right | &  =  & \left|  \frac{\ppm-1}{\kappa_{i_{j-1}}- \kappa_{i_{j-\ppm}} } B^{V_{\alpha, T_\kappa}}_{\ppm-1, j-1}(x)
   - \frac{\ppm-1}{\kappa_{i_j}- \kappa_{i_{j-\ppm+1}} } B^{V_{\alpha, T_\kappa}}_{\ppm-1, j}(x) \right | \, \le \, \frac{2 (\ppm-1)K_n}{c_{\kappa, 1}},  \label{eqn:Bspline_dev_bd}
\end{eqnarray}
where we use  the upper bound of B-splines and the fact that $\frac{\ppm-1}{\kappa_{i_s}- \kappa_{i_{s-\ppm+1}} } \le (\ppm-1) K_n/c_{\kappa, 1}$ for any $s$.
 Due to the property of $M_n$, we have $(\ppm-1)K_n/c_{\kappa, 1}\le M_n$. This further implies via $\ppm \ge 2$ that
\begin{equation}
  \left| \Big( B^{V_{\alpha, T_\kappa}}_{\ppm, j}(x) \Big)' \right | \, \le \,  \frac{2(\ppm-1)K_n}{c_{\kappa, 1}} \, \le \, 2 M_n \, \le \, 2 ( M_n )^{\ppm-1}. \label{eqn:Bspline_dev_bd_2}
\end{equation}
 For each fixed $j=1, \ldots, |\ol\alpha|+\ppm+1$, we apply Lemma~\ref{lem:funct_bound} with $\wt n:=L_n$, $a:=0$, $b:=1$, $s_k:=k/L_n$, $\varrho:=1/L_n$, $f(x) := B^{V_{\alpha, T_\kappa}}_{\ppm, j}(x)$, $v=(v_k):=\big( [Z_{\ppm}]_{j, k} \big)$, $\mu_1 := C_{\ppm}\frac{( M_n )^{\ppm-1}}{L_n}$, and $\mu_2 := 2 ( M_n )^{\ppm-1}$ to obtain that for each $k=2, \ldots, L_n$,
\[
   \left|\sum_{\ell=1}^{k-1} \frac{1}{L_n} \big [Z_{\ppm} \big]_{j, \, \ell} - \int_{0}^{\frac{k-1}{L_n}} B^{V_{\alpha, T_\kappa}}_{\ppm, j}(x)\,dx \right| \, \le \,  \frac{C_{\ppm} \cdot ( M_n )^{\ppm-1}}{L_n} + \frac{3 ( M_n )^{\ppm-1}}{L_n} \, = \, \big(C_{\ppm} + 3 \big) \frac{( M_n )^{\ppm-1}}{L_n}.
\]
Note that the above upper bound is independent of $\alpha, T_\kappa, j$ and $k$.
By using this result, (\ref{eqn:theta_p}), and $C_{\ppm}=6\cdot(2^{\ppm-1}-1)$, we deduce the following uniform bound for $\theta_{\ppm, \alpha, T_\kappa, L_n}$ independent of $\alpha$ and $T_\kappa$:
\[
   \theta_{\ppm, \alpha, T_\kappa, L_n} \, \le \, 2 M_n \cdot \big(C_{\ppm} + 3 \big) \frac{( M_n )^{\ppm-1}}{L_n} \, = \, 6 \cdot (2^{\ppm}-1) \cdot \frac{( M_n )^{\ppm}}{L_n}.
\]
Therefore, the proposition holds by the induction principle.
\end{proof}


The uniform error bound established in Proposition~\ref{prop:Z_approx_Bspline} yields the following important result for the matrix $F^{(m)}_{\alpha, T_\kappa}$ constructed  in Section~\ref{sect:construction_F}.

\begin{proposition} \label{prop:F_times_B_T=B_alpha}
  For any given $m, K_n \in \mathbb N$, any knot sequence $T_\kappa \in \Tcal_{K_n}$, and any index set $\alpha$ and its associated knot sequence $V_{\alpha, T_\kappa}$ defined in (\ref{eqn:V_knot_seq}), the B-splines $\{B_{m,\ell}^{V_{\alpha,T_\kappa}}\}_{\ell=1}^{q_\alpha}$ and $\{B_{m,j}^{T_\kappa}\}_{j=1}^{T_n}$ satisfy for each $\ell=1, \ldots, q_\alpha$,
 \begin{equation}\label{eqn:sum_of_Bsplines}
     \sum_{j=1}^{T_n} \left [F_{\alpha,T_\kappa}^{(m)} \right]_{\ell,j} \, B_{m,j}^{T_\kappa}(x)\,  = \, B_{m,\ell}^{V_{\alpha,T_\kappa}}(x), \qquad \quad \forall \ x \in [0,1].
\end{equation}
\end{proposition}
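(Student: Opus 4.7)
The plan is to extract the identity by a limiting argument, using Proposition~\ref{prop:Z_approx_Bspline} twice and letting $L_n \to \infty$ with all other parameters held fixed. First, I will observe that the induction in the proof of Lemma~\ref{lem:matrix_FtimesX}(2) extends one more step to $p=m$---the identity block $I_0$ simply disappears---yielding
\[
F^{(m)}_{\alpha, T_\kappa} \cdot X_{m, T_\kappa, L_n} = Z_{m, \alpha, T_\kappa, L_n}
\]
for every $L_n \in \mathbb N$. Specializing the same construction to the empty index set $\alpha = \emptyset$, for which $F^{(p)}_{\emptyset, T_\kappa} = I$ for all $p$ (noted immediately after (\ref{eqn:def_F})) and $V_{\emptyset, T_\kappa} = T_\kappa$, also gives $X_{m, T_\kappa, L_n} = Z_{m, \emptyset, T_\kappa, L_n}$.

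Next, fix $m$, $K_n$, $T_\kappa \in \Tcal_{K_n}$, and the index set $\alpha$ (the nonempty case; the empty case is trivial by the remark just recalled), and choose any $M_n \ge m K_n / c_{\kappa, 1}$ independent of $L_n$. Applying Proposition~\ref{prop:Z_approx_Bspline} to the pair $(\alpha, T_\kappa)$ shows $\bigl|[Z_{m, \alpha, T_\kappa, L_n}]_{\ell, k} - B^{V_{\alpha, T_\kappa}}_{m, \ell}(\tfrac{k-1}{L_n})\bigr| = O(1/L_n)$, and applying it with $\alpha = \emptyset$ shows $\bigl|[X_{m, T_\kappa, L_n}]_{j, k} - B^{T_\kappa}_{m, j}(\tfrac{k-1}{L_n})\bigr| = O(1/L_n)$, in both cases uniformly in $j, k, \ell$ with constants depending only on $m, K_n, c_{\kappa,1}$. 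Combining these estimates via the matrix identity above, expanding $[Z_{m, \alpha, T_\kappa, L_n}]_{\ell, k} = \sum_j [F^{(m)}_{\alpha, T_\kappa}]_{\ell, j} \, [X_{m, T_\kappa, L_n}]_{j, k}$, and controlling the absolute row sums of $F^{(m)}_{\alpha, T_\kappa}$ using Lemma~\ref{lem:F_m_bound}(2) (whose bound depends on $m, K_n, c_{\kappa, 1}, c_{\kappa, 2}$ but not on $L_n$), I obtain
\[
\biggl| \sum_{j=1}^{T_n} [F^{(m)}_{\alpha, T_\kappa}]_{\ell, j} \, B^{T_\kappa}_{m, j}\bigl(\tfrac{k-1}{L_n}\bigr) - B^{V_{\alpha, T_\kappa}}_{m, \ell}\bigl(\tfrac{k-1}{L_n}\bigr) \biggr| \le C/L_n
\]
uniformly in $k \in \{1, \ldots, L_n\}$ for some $C$ independent of $L_n$.

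Finally, I let $L_n \to \infty$: for arbitrary $x \in [0, 1)$, the grid point $\tfrac{k_{L_n}-1}{L_n}$ with $k_{L_n} := \lfloor L_n x \rfloor + 1$ converges to $x$, and the continuity of both sides of (\ref{eqn:sum_of_Bsplines}) (valid for $m \ge 2$ by the smoothness of higher-order B-splines recalled in Section~\ref{sec:prob_form}) yields the desired pointwise identity at $x$; the endpoint $x = 1$ follows by one-sided continuity. The $m = 1$ case is immediate from the definitions: $F^{(1)}_{\alpha, T_\kappa}$ reduces to $E_{\alpha, T_\kappa}$ and the sum of indicator B-splines $B^{T_\kappa}_{1, j}$ corresponding to the nonzero entries of a row of $E_{\alpha, T_\kappa}$ reconstructs the indicator B-spline $B^{V_{\alpha, T_\kappa}}_{1, \ell}$ on the coarser subdivision. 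The main potential obstacle is verifying that both approximation error constants are genuinely independent of $L_n$, which is exactly what Proposition~\ref{prop:Z_approx_Bspline} (with $M_n$ depending only on $K_n$) together with the $L_n$-free bound in Lemma~\ref{lem:F_m_bound}(2) deliver; once this uniformity is in place the limiting step is routine.
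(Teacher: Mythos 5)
Your proposal is correct and follows essentially the same route as the paper: both reduce the claim to a limit of the matrix identity $F^{(m)}_{\alpha,T_\kappa}X_{m,T_\kappa,L_n}=Z_{m,\alpha,T_\kappa,L_n}$, invoking Proposition~\ref{prop:Z_approx_Bspline} for the pair $(\alpha,T_\kappa)$ and for $(\emptyset,T_\kappa)$ together with the $L_n$-free entry bound of Lemma~\ref{lem:F_m_bound}(2), and treating $m=1$ directly. The only cosmetic difference is the limiting step: you approximate an arbitrary $x\in[0,1)$ by nearby grid points $\frac{\lfloor L_nx\rfloor}{L_n}$ and use continuity, whereas the paper first fixes a rational $x_*$ lying exactly on a subsequence of grids and then extends by density of the rationals; both are valid.
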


\begin{proof}
Consider $m = 1$ first. Recall that $\kappa_{i_0}=0$, $\kappa_{i_{|\ol \alpha|+1}}= \kappa_{i_{q_\alpha}}=1$, and $F^{(1)}_{\alpha, T_\kappa}= E_{\alpha, T_\kappa}$ (cf. (\ref{eqn:E_F1_matrices})). It follows from (\ref{eqn:tau_k}) and~(\ref{eqn:entry_F1}) that for each $\ell=1, \ldots, q_\alpha$ and any $x \in [0, 1]$,
\begin{align*}
\sum_{j=1}^{K_n} \left[F_{\alpha,T_\kappa}^{(1)} \right]_{\ell,j}\,B_{1,j}^{T_\kappa}(x) & =
\sum_{j=1}^{K_n-1} \mathbf I_{[\kappa_{i_{\ell-1}},\,\kappa_{i_\ell})}(\kappa_{j-1}) \cdot \mathbf I_{[\kappa_{j-1},\,\kappa_j)}(x)
+ \mathbf I_{[\kappa_{i_{\ell-1}},\,\kappa_{i_\ell})}(\kappa_{K_n-1}) \cdot \mathbf I_{[\kappa_{K_n-1},\,\kappa_{K_n}]}(x)\\
& =
\begin{cases}
 \, \mathbf I_{[\kappa_{i_{\ell-1}},\kappa_{i_\ell})}(x) & \mbox{if } \ \ell \in \{ 1,\dots,q_\alpha-1\}\\
\, \mathbf I_{[\kappa_{i_{q_\alpha-1}},\kappa_{i_{q_\alpha}}]}(x) & \mbox{if } \ \ell =q_\alpha
\end{cases}\\
 & \, = \, B_{1,\ell}^{V_{\alpha,T_\kappa}}(x).
\end{align*}

 In what follows, we consider $m \ge 2$. Recall that when $\alpha$ is the empty set, $q_\alpha = T_n=K_n+m-1$, $F_{\alpha,T_\kappa}^{(m)} = I_{T_n}$,  $B_{m,\ell}^{V_{\emptyset,T_\kappa}}= B_{m,\ell}^{T_\kappa}$ for each $\ell$, and $Z_{m,\emptyset,T_\kappa,L_n} = X_{m,T_\kappa,L_n}\in \mathbb R^{q_\alpha \times (L_n+m-1)}$ (cf. Lemma~\ref{lem:matrix_FtimesX}). Motivated by these observations and $ F^{(m)}_{\alpha, T_\kappa} \cdot X_{m,T_\kappa,L_n}= Z_{m,\alpha,T_\kappa,L_n}$ for any $\alpha$ and $T_\kappa$ (cf. Lemma~\ref{lem:matrix_FtimesX}), a key idea for the subsequent proof is to approximate $B_{m, j}^{T_\kappa}$ and $B_{m,\ell}^{V_{\alpha, T_\kappa}}$ by $X_{m,T_\kappa,L_n}$ and $Z_{m, \alpha, T_\kappa,L_n}$ respectively, where approximation errors can be made arbitrarily small by choosing a sufficiently large $L_n$ in view of Proposition~\ref{prop:Z_approx_Bspline}.

Fix $m, K_n, \alpha, T_\kappa$. Let $M_n :=\lceil m \cdot K_n/c_{\kappa, 1} \rceil$. Hence both $M_n$ and $T_n:=K_n+m-1$ are fixed natural numbers. Since we shall choose a sequence of sufficiently large $L_n$ independent of the above-mentioned fixed numbers, we write $L_n$ as $L_s$ below to avoid notational confusion.
In order to apply Proposition~\ref{prop:Z_approx_Bspline}, we first consider rational $x$ in $[0, 1)$. Let $x_* \in [0, 1)$ be an arbitrary but fixed rational number, and let $(L_s)$ be an increasing sequence of natural numbers (depending on $x_*$) such that $L_s \rightarrow \infty$ as $s\rightarrow \infty$ and for each $s$, $x_* = \frac{i^*_s-1}{L_s}$ for some $i^*_s \in \{ 1,\ldots,L_s\}$. (Here $i^*_s$ depends on $x_*$ and $L_s$ only.)
In light of the observations $B_{m,\ell}^{V_{\emptyset,T_\kappa}}= B_{m,\ell}^{T_\kappa}$ and $Z_{m,\emptyset,T_\kappa,L_s} = X_{m,T_\kappa,L_s}$, it follows from Proposition~\ref{prop:Z_approx_Bspline}
that for each $\ell=1, \ldots, q_\alpha$ and each $s$, 
\begin{equation}
  \left| \big[ X_{m,T_\kappa,L_s} \big]_{\ell, \, i^*_s} -B_{m,\ell}^{T_\kappa}(x_*) \, \right| \, = \,
   \left| \big[ Z_{m, \emptyset, T_\kappa,L_s} \big]_{\ell, \, i^*_s} -B_{m,\ell}^{V_{\emptyset, T_\kappa}} \Big(\frac{i^*_s-1}{L_s} \Big) \right| 
 \, \le \,    \frac{ 6 \, (2^m-1) \, M_n^{m-1}}{L_s}.
 \label{eqn:bound_X_B}
\end{equation}
By using  $Z_{m,\alpha,T_\kappa,L_s} = F^{(m)}_{\alpha, T_\kappa} \cdot X_{m,T_\kappa,L_s}$ (cf. Lemma~\ref{lem:matrix_FtimesX}), we thus have,  for each $\ell=1, \ldots, q_\alpha$,
\begin{align*}
&\left|\sum_{j=1}^{T_n}  \big[ F_{\alpha,T_\kappa}^{(m)} \big]_{\ell,j} \, B_{m,j}^{T_\kappa}(x_*) - B_{m,\ell}^{V_\alpha,T_\kappa}(x_*)\right| \\
& \, \le \, \sum_{j=1}^{T_n}\left| \big[ F_{\alpha,T_\kappa}^{(m)} \big]_{\ell,j} \cdot \left( B_{m,j}^{T_\kappa}(x_*) - \big[ X_{m,T_\kappa,L_s} \big]_{j,i^*_s}\right) \right| + \left|\sum_{j=1}^{T_n} \big[ F_{\alpha,T_\kappa}^{(m)} \big]_{\ell,j} \cdot \big[X_{m,T_\kappa,L_s} \big]_{j,i^*_s}- B_{m,\ell}^{V_{\alpha,T_\kappa}}(x_*)\right| \\
& \, \le \,  \sum_{j=1}^{T_n}\left| \big[F_{\alpha,T_\kappa}^{(m)} \big]_{\ell,j}\right|  \cdot    \left| B_{m,j}^{T_\kappa}(x_*) - \big [X_{m,T_\kappa,L_s} \big]_{j,i^*_s}\right|  + \left| \big[Z_{m,\alpha,T_\kappa,L_s} \big]_{\ell,i^*_s}  - B_{m,\ell}^{V_{\alpha,T_\kappa}}\Big(\frac{i^*_s-1}{L_s} \Big)\right| \\
& \le T_n \cdot \left[ \left(\frac{2m}{c_{\kappa,1}}\cdot \max\Big(1, \frac{c_{\kappa, 2}}{K_n} \Big) \cdot  T_n \right)^{m-1} \cdot \big( K_n \big)^{m} \right] \cdot  \frac{6 (2^{m-1}-1) M^{m-1}_n }{L_s}  +
 \frac{6\, (2^{m-1}-1)M_n^{m-1}}{L_s},
\end{align*}
where the last inequality follows from the bounds given in statement (2) of Lemma~\ref{lem:F_m_bound}, (\ref{eqn:bound_X_B}), and Proposition~\ref{prop:Z_approx_Bspline} respectively.
 By the virtue of $L^{-1}_s \rightarrow 0$ as $s\rightarrow \infty$, we have $\sum_{j=1}^{T_n}  \big[ F_{\alpha,T_\kappa}^{(m)} \big]_{\ell,j} \, B_{m,j}^{T_\kappa}(x_*) =  B_{m,\ell}^{V_\alpha,T_\kappa}(x_*)$. This shows that (\ref{eqn:sum_of_Bsplines}) holds for all rational $x\in [0, 1)$. Since  $B_{m,j}^{T_\kappa}$ and $B_{m,\ell}^{V_{\alpha, T_\kappa}}$ are continuous on $[0, 1]$ for any $j$, $\ell$, $T_\kappa$, and $\alpha$ when $m \ge 2$, we conclude via the density of rational numbers in $[0, 1)$ that (\ref{eqn:sum_of_Bsplines}) holds for all $x \in [0,1)$. Finally, the continuity of $B_{m,j}^{T_\kappa}$ and $B_{m,\ell}^{V_{\alpha, T_\kappa}}$ also shows that (\ref{eqn:sum_of_Bsplines}) holds at $x=1$.
\end{proof}

By Proposition~\ref{prop:F_times_B_T=B_alpha}, we derive tight uniform bounds for $\big\|(F_{\alpha,T_\kappa}^{(m)})^T \big\|_\infty$ and $\big\|K_n^{-1} \,  \Xi_{\alpha,T_\kappa}^{(m)} \, F_{\alpha,T_\kappa}^{(m)} \big\|_\infty$ in the next corollary; these bounds are crucial for the proof of Theorem~\ref{thm:uniform_Lip} (cf. Section~\ref{sect:proof_unif_Lip_final}).
We introduce more notation. Let $\mathbf e_\ell$ be the $\ell$th standard basis (column) vector in the Euclidean space, i.e., $\big[ \mathbf e_\ell \big]_k=\delta_{\ell, k}$. Moreover, for a given vector $v=(v_1, \ldots, v_k)\in \mathbb R^k$, the number of sign changes of $v$ is defined as the largest integer $r_v \in \mathbb Z_+$ such that for some $1 \leq j_1 < \dots < j_{r_v} \leq k$, $v_{j_i} \cdot v_{j_{i+1}} < 0$ for each $i = 1, \ldots,r_v$ \cite[page~138]{deBoor_book01}. Clearly, $\mathbf e_\ell$ has zero sign changes for each $\ell$.

\begin{corollary} \label{coro:F_T_XiF_bounds}
 For any $m\in \mathbb N$, any knot sequence $T_\kappa \in \Tcal_{K_n}$, and any index set $\alpha$ defined in (\ref{eqn:index_alpha}), the following hold:
  \begin{itemize}
   \item[(1)] $F_{\alpha,T_\kappa}^{(m)} $ is a nonnegative matrix, $\big\|(F_{\alpha,T_\kappa}^{(m)})^T \big\|_\infty = 1$, and
   \item[(2)]  $\big\|K_n^{-1} \,  \Xi_{\alpha,T_\kappa}^{(m)} \, F_{\alpha,T_\kappa}^{(m)} \big\|_\infty \leq  \frac{m}{c_{\kappa,1}}$.
\end{itemize}
\end{corollary}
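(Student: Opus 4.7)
The plan is to exploit Proposition~\ref{prop:F_times_B_T=B_alpha}, which identifies the $\ell$th row of $F_{\alpha,T_\kappa}^{(m)}$ as the vector of coefficients expressing the coarser B-spline $B_{m,\ell}^{V_{\alpha,T_\kappa}}$ in the finer basis $\{B_{m,j}^{T_\kappa}\}$. Since $V_{\alpha,T_\kappa}$ is obtained from $T_\kappa$ by discarding exactly the knots $\kappa_i$ with $i\in\alpha$, this is precisely the knot-insertion matrix from the coarser to the finer knot sequence. A classical B-spline result (the Oslo / knot-insertion algorithm, see, e.g., \cite{deBoor_book01}) then expresses each coarse B-spline as a convex combination of the finer ones, which forces all entries of $F_{\alpha,T_\kappa}^{(m)}$ to be nonnegative. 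This settles the nonnegativity part of statement (1) and is the shared ingredient used in the two norm bounds below.

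For the equality $\|(F_{\alpha,T_\kappa}^{(m)})^T\|_\infty = 1$, the idea is that the column sums of $F_{\alpha,T_\kappa}^{(m)}$ are all equal to $1$. Summing (\ref{eqn:sum_of_Bsplines}) over $\ell = 1,\ldots,q_\alpha$ and applying the partition of unity on both the $V_{\alpha,T_\kappa}$- and the $T_\kappa$-side yields
\[
\sum_{j=1}^{T_n}\Bigl(\sum_{\ell=1}^{q_\alpha}\bigl[F_{\alpha,T_\kappa}^{(m)}\bigr]_{\ell,j}\Bigr)B_{m,j}^{T_\kappa}(x) \;=\; \sum_{\ell=1}^{q_\alpha} B_{m,\ell}^{V_{\alpha,T_\kappa}}(x) \;=\; 1 \;=\; \sum_{j=1}^{T_n} B_{m,j}^{T_\kappa}(x), \quad x\in[0,1].
\]
Since the $\{B_{m,j}^{T_\kappa}\}$ are linearly independent, matching coefficients forces $\sum_\ell [F_{\alpha,T_\kappa}^{(m)}]_{\ell,j} = 1$ for every $j$. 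Combined with nonnegativity, this gives $\|(F_{\alpha,T_\kappa}^{(m)})^T\|_\infty = \max_j \sum_\ell\bigl|[F_{\alpha,T_\kappa}^{(m)}]_{\ell,j}\bigr| = 1$.

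For statement (2), I will integrate (\ref{eqn:sum_of_Bsplines}) over $\mathbb R$ and use the $L_1$-norm formula (\ref{eqn:L1_norm_Bspline}) on both sides:
\[
\sum_{j=1}^{T_n}\bigl[F_{\alpha,T_\kappa}^{(m)}\bigr]_{\ell,j}\,\frac{\kappa_j-\kappa_{j-m}}{m} \;=\; \frac{\tau_{\alpha,T_\kappa,\ell}-\tau_{\alpha,T_\kappa,\ell-m}}{m}, \quad \ell=1,\ldots,q_\alpha,
\]
where the identity is sign-free thanks to nonnegativity. Invoking the lower bound $(\kappa_j-\kappa_{j-m})/m \ge c_{\kappa,1}/(m K_n)$ from (\ref{eqn:knot_derivative_bd}) yields
\[
\sum_{j=1}^{T_n}\bigl[F_{\alpha,T_\kappa}^{(m)}\bigr]_{\ell,j} \;\le\; \frac{m K_n}{c_{\kappa,1}}\cdot\frac{\tau_{\alpha,T_\kappa,\ell}-\tau_{\alpha,T_\kappa,\ell-m}}{m} \;=\; \frac{K_n\,(\tau_{\alpha,T_\kappa,\ell}-\tau_{\alpha,T_\kappa,\ell-m})}{c_{\kappa,1}}.
\]
Multiplying this row-sum bound by the $\ell$th diagonal entry $m/(\tau_{\alpha,T_\kappa,\ell}-\tau_{\alpha,T_\kappa,\ell-m})$ of $\Xi_{\alpha,T_\kappa}^{(m)}$ and then by $K_n^{-1}$ gives the uniform constant $m/c_{\kappa,1}$, bounding every row sum of $K_n^{-1}\,\Xi_{\alpha,T_\kappa}^{(m)}\,F_{\alpha,T_\kappa}^{(m)}$, hence its $\ell_\infty$-norm.

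The main obstacle is the nonnegativity of $F_{\alpha,T_\kappa}^{(m)}$. The recursive definition (\ref{eqn:def_F}) involves signed matrices (notably $\wh D^{(q_\alpha)}$), so a direct induction through the formula does not give the sign property. One must route through the B-spline interpretation in Proposition~\ref{prop:F_times_B_T=B_alpha} together with uniqueness of representation in the B-spline basis, and then invoke the classical nonnegativity of the knot-insertion matrix. Once this is in hand, both assertions are straightforward consequences of partition of unity and the $L_1$-norm formula.
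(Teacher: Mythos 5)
Your proposal is correct, and while it rests on the same pivot as the paper --- Proposition~\ref{prop:F_times_B_T=B_alpha} identifies $F_{\alpha,T_\kappa}^{(m)}$ as the knot-insertion matrix from $V_{\alpha,T_\kappa}$ to $T_\kappa$ --- the way you extract the two norm bounds from that identification is genuinely different. For nonnegativity the paper invokes the sign-change (variation-diminishing) lemma \cite[Lemma 27, Chapter XI]{deBoor_book01} to conclude each row has constant sign, then rules out the nonpositive case; you instead cite the classical nonnegativity of the discrete B-splines produced by knot insertion. These are interchangeable. For $\|(F_{\alpha,T_\kappa}^{(m)})^T\|_\infty=1$, the paper computes $\onebld^T_{q_\alpha}F_{\alpha,T_\kappa}^{(m)}=\onebld^T_{T_n}$ directly from the recursion (\ref{eqn:def_F}) using the block structure of Lemma~\ref{lem:F_m_bound}; your route via summing (\ref{eqn:sum_of_Bsplines}) over $\ell$, partition of unity on both knot sequences, and linear independence of $\{B_{m,j}^{T_\kappa}\}$ is cleaner and handles $m=1$ and $\alpha=\emptyset$ without case splits. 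For statement (2) the divergence is largest: the paper first localizes the support of each row (showing $[F_{\alpha,T_\kappa}^{(m)}]_{\ell,j}=0$ outside $j\in\{i_{\ell-m}+m,\dots,i_\ell\}$ via local linear independence), bounds every nonzero entry by $1$, and counts entries against $\kappa_{i_\ell}-\kappa_{i_{\ell-m}}$; you integrate (\ref{eqn:sum_of_Bsplines}), apply the $L_1$-norm formula (\ref{eqn:L1_norm_Bspline}) on both sides to get the exact weighted row-sum identity $\sum_j [F_{\alpha,T_\kappa}^{(m)}]_{\ell,j}\,\frac{\kappa_j-\kappa_{j-m}}{m}=\|B_{m,\ell}^{V_{\alpha,T_\kappa}}\|_{L_1}$, and then use only the lower bound in (\ref{eqn:knot_derivative_bd}). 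Your version is shorter and avoids the support analysis entirely (note the B-splines are supported in $[0,1]$, so integrating over $\mathbb{R}$ versus $[0,1]$ is immaterial); what the paper's longer route buys is the explicit banded structure of $F_{\alpha,T_\kappa}^{(m)}$, obtained as a by-product. Both arguments land on the same constant $m/c_{\kappa,1}$, and I see no gap in yours.
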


\begin{proof}
  When $\alpha$ is the empty set, $F_{\alpha,T_\kappa}^{(m)}$ is the identity matrix and $\big\|K_n^{-1} \,  \Xi_{\alpha,T_\kappa}^{(m)} \big\|_\infty \leq  \frac{m}{c_{\kappa,1}}$ (using (\ref{eqn:knot_derivative_bd})) so that the corollary holds. We thus consider the nonempty $\alpha$ as follows.

  (1) Observe that the knot sequence $T_\kappa$ can be formed by inserting additional knots into the knot sequence $V_{\alpha,T_\kappa}$. By Proposition~\ref{prop:F_times_B_T=B_alpha}, we see that for each $\ell=1, \ldots, q_\alpha$,
  $\sum_{j = 1}^{T_n} \big [F_{\alpha,T_\kappa}^{(m)} \big]_{\ell,j} B_{m,j}^{T_\kappa}(x) = B_{m,\ell}^{V_{\alpha,T_\kappa}}(x)= \sum_{i=1}^{q_\alpha} [ \mathbf e_\ell \big]_i B_{m,i}^{V_{\alpha,T_\kappa}}(x)$ for all $x \in [0, 1]$.
  Since $\mathbf e_\ell$ has zero sign changes,
  we deduce via \cite[Lemma 27, Chapter XI]{deBoor_book01} that $\big (F_{\alpha,T_\kappa}^{(m)} \big)_{\ell,\bullet}$ has zero sign changes for each $\ell = 1,\dots,q_\alpha$. This shows that either
  $\big(F_{\alpha,T_\kappa}^{(m)} \big)_{\ell,\bullet} \geq 0$ or $\big(F_{\alpha,T_\kappa}^{(m)} \big)_{\ell,\bullet} \le 0$. In view of the nonnegativity of $B_{m,j}^{T_\kappa}$ and (\ref{eqn:sum_of_Bsplines}), the latter implies that $B_{m,\ell}^{V_{\alpha,T_\kappa}}(x)\le 0$ for all $x\in [0, 1]$. But this contradicts the fact that $B_{m,\ell}^{V_{\alpha,T_\kappa}}(x)>0$ when $x$ is in the interior of the support of $B_{m,\ell}^{V_{\alpha,T_\kappa}}$. Therefore, $F_{\alpha,T_\kappa}^{(m)}$ is a nonnegative matrix.

    When $m=1$, it is clear that $\|(F_{\alpha,T_\kappa}^{(m)})^T\|_\infty=1$. Consider $m \ge 2$ below. Thanks to the nonnegativity of  $F_{\alpha,T_\kappa}^{(m)}$, we have $\|(F_{\alpha,T_\kappa}^{(m)})^T\|_\infty = \|(F_{\alpha,T_\kappa}^{(m)})^T \cdot \onebld_{q_\alpha}\|_\infty$.
   By the construction of $F_{\alpha,T_\kappa}^{(m)}$ in (\ref{eqn:def_F}) and the
   structure of $\wh D^{(q_\alpha)}$, $\Xi_{\alpha,T_\kappa}^{(m-1)}$, $F_{\alpha,T_\kappa}^{(m-1)}$, $\wh \Delta_{m-1,T_\kappa}$, and $\wh S^{(T_n)}$ given by~(\ref{eqn:Xi_matrix}),~(\ref{eqn:wt_Delta}),~(\ref{eqn:def_Swh}), and statement (1) of Lemma~\ref{lem:F_m_bound},  we have
   \begin{eqnarray*}
     \onebld^T_{q_\alpha} \cdot F_{\alpha,T_\kappa}^{(m)}  & = & \onebld^T_{q_\alpha} \cdot \wh D^{(q_\alpha)} \cdot \Xi^{(m-1)}_{\alpha, T_\kappa} \cdot F^{(m-1)}_{\alpha, T_\kappa} \cdot \wh \Delta_{m-1, T_\kappa} \cdot \wh S^{(T_n)} \\
     & = & \mathbf e^T_1 \cdot \Big(  \Xi^{(m-1)}_{\alpha, T_\kappa} \cdot F^{(m-1)}_{\alpha, T_\kappa} \cdot \wh \Delta_{m-1, T_\kappa} \Big) \cdot \wh S^{(T_n)}
     \, = \, \mathbf e^T_1 \cdot \begin{bmatrix} 1 & 0 \\ 0 & \star \end{bmatrix} \cdot \wh S^{(T_n)} \\
     & = &
     \mathbf e^T_1 \cdot \wh S^{(T_n)} = \onebld^T_{T_n}.
   \end{eqnarray*}
  This shows that $\|(F_{\alpha,T_\kappa}^{(m)})^T \cdot \onebld_{q_\alpha}\|_\infty=1$, completing the proof of statement (1).

(2) It follows from (\ref{eqn:sum_of_Bsplines}) that for each $\ell=1, \ldots, q_\alpha$, $\sum_{j=1}^{T_n} \big[F_{\alpha,T_\kappa}^{(m)} \big]_{\ell, j} B^{T_\kappa}_{m,j}(x) = 0$ except on $[\kappa_{i_{\ell-m}},\kappa_{i_\ell}]$, i.e., the support of $B_{m,\ell}^{V_{\alpha,T_\kappa}}$.
Note that $i_s:= s$ if $s<0$, and $i_{|\ol\alpha|+p} := K_n+p-1$ for any $p = 1,\ldots,m$; furthermore,  $\kappa_i = 0$ if $i \leq 0$ and $\kappa_i = 1$ for $i \geq |\ol \alpha|+1$.
Additionally, for any $r = 1,\dots,K_n$, the B-splines $B_{m,j}^{T_\kappa}$ that are not identically zero on $[\kappa_{r-1},\kappa_r]$ are linearly independent when restricted to $[\kappa_{r-1},\kappa_r]$.  Hence,
if $\sum_{j=1}^{T_n} \big[F_{\alpha,T_\kappa}^{(m)} \big]_{\ell,j} B_{m,j}^{T_\kappa}(x) = B_{m,\ell}^{V_{\alpha,T_\kappa}}(x) = 0, \forall \ x\in [\kappa_{r-1},\kappa_r]$ for some $r$, then $\big [F^{(m)}_{\alpha,T_\kappa} \big]_{\ell,j} = 0$ for each $j = r,r+1,\ldots,r+m-1$.
This, along with the fact that $B_{m,\ell}^{V_{\alpha,T_\kappa}}(x) = 0$ except on $[\kappa_{i_{\ell-m}},\kappa_{i_\ell}]$, shows that $\big[F_{\alpha,T_\kappa}^{(m)} \big]_{\ell,j} = 0$ for all $j = 1,2,\ldots,i_{\ell-m}+m-1$ and $j = i_\ell+1,i_\ell+2,\ldots,T_n$.
Moreover, by the nonnegativity of $F_{\alpha,T_\kappa}^{(m)}$ and $\big\|(F_{\alpha,T_\kappa}^{(m)})^T \big\|_\infty =1$, we see that all nonzero entries of $F_{\alpha,T_\kappa}^{(m)}$ are less than or equal to one. Therefore, for each $\ell=1, \ldots, q_\alpha$,
\begin{eqnarray*}
 \left \| \Big( K_n^{-1} \Xi_{\alpha,T_\kappa}^{(m)} F_{\alpha,T_\kappa}^{(m)} \Big)_{\ell, \bullet} \right \|_\infty
& = &  K_n^{-1} \frac{m}{\kappa_{i_\ell}-\kappa_{i_{\ell-m}}} \sum_{j= i_{\ell-m}+m}^{i_\ell} \big[F_{\alpha,T_\kappa}^{(m)} \big]_{\ell,j}\\
& \leq &  K^{-1}_n \cdot \frac{m}{\kappa_{i_\ell}-\kappa_{i_{\ell-m}}} \cdot \Big( i_\ell - i_{\ell-m}-m+1\Big).
\end{eqnarray*}
Since  $\kappa_{i_\ell}-\kappa_{i_{\ell-m}} \ge \frac{c_{\kappa, 1} } {K_n} \big(i_\ell-i_{\ell-m}-m+1 \big)>0$ (see the discussions before (\ref{eqn:knot_derivative_bd})), we obtain,  for each $\ell=1, \ldots, q_\alpha$,  $\big\| \big( K_n^{-1} \Xi_{\alpha,T_\kappa}^{(m)} F_{\alpha,T_\kappa}^{(m)} \big)_{\ell, \bullet} \big\|_\infty \le m/c_{\kappa, 1}$. This completes the proof of statement (2).
\end{proof}


We exploit Proposition~\ref{prop:Z_approx_Bspline}  to derive more uniform bounds and uniform error bounds.
Many of these bounds require $L_n$ to be sufficiently large and satisfy suitable order conditions with respect to $K_n$. We introduce these order conditions as follows.
Let $(K_n)$ be an increasing sequence of natural numbers with $K_n \rightarrow \infty$ as $n\rightarrow \infty$. We say that a sequence $(L_n)$ of natural numbers satisfies
\begin{description} \label{defintion_H_property}
\item Property $\bf H$: if there exist two increasing sequences of natural numbers $(M_n)$ and $(J_n)$ with $M_n\ge m \cdot K_n/c_{\kappa, 1}$ for each $n$ and $(J_n) \rightarrow \infty$ as $n\rightarrow \infty$ such that $L_n = J_n \cdot M^{m+1}_n$ for each $n$, where $c_{\kappa, 1}>0$ is used to define $\mathcal T_{K_n}$ in (\ref{eqn:set_of_T_K_n}), and $m$ is the fixed spline order.
\end{description}
Note that the sequence $(L_n)$ implicitly depends on the sequence $(K_n)$ through $(M_n)$ in this property.

Define the truncated submatrix of $Z_{m, \alpha, T_\kappa, L_n}\in \mathbb R^{q_\alpha \times (L_n +m-1)}$:
\begin{equation} \label{eqn:H_matrix}
  H_{\alpha, T_\kappa, L_n} \, := \, \big( Z_{m, \alpha, T_\kappa, L_n} \big)_{1:q_\alpha, \, 1:L_n}\in \mathbb R^{q_\alpha \times L_n}.
\end{equation}
The importance of $H_{\alpha, T_\kappa, L_n}$ is illustrated in the following facts for given $\alpha$ and $T_\kappa \in \Tcal_{K_n}$:
\begin{itemize}
  \item [(a)] It follows from statement (2) of Lemma~\ref{lem:matrix_FtimesX} that $F^{(m)}_{\alpha, T_\kappa} \cdot X_{m, T_\kappa, L_n} = Z_{m, \alpha, T_\kappa, L_n}$. Hence, by
      (\ref{eqn:H_matrix}), we obtain
$
   H_{\alpha, T_\kappa, L_n} = \big( Z_{m, \alpha, T_\kappa, L_n} \big)_{1:q_\alpha, 1:L_n} = F^{(m)}_{\alpha, T_\kappa} \cdot \big( X_{m, T_\kappa, L_n} \big)_{1:T_n, 1:L_n}.
$
%
  \item [(b)] In light of the definition of $\wt\Lambda_{T_\kappa, K_n, L_n}$ (cf. (\ref{eqn:wt_Lambda})) and the result in (a), 
      we have
\begin{align}
 \lefteqn{ K^{-1}_n \Xi^{(m)}_{\alpha, T_\kappa} F^{(m)}_{\alpha, T_\kappa} \cdot \wt \Lambda_{T_\kappa, K_n,  L_n} \, \big(F^{(m)}_{\alpha, T_\kappa}\big)^T }  \notag \\
 & \, = \, \frac{\Xi^{(m)}_{\alpha, T_\kappa}}{L_n}  \cdot F^{(m)}_{\alpha, T_\kappa}  \big( X_{m, T_\kappa, L_n} \big)_{1:T_n, 1:L_n} \cdot \left( F^{(m)}_{\alpha, T_\kappa} \big( X_{m, T_\kappa, L_n} \big)_{1:T_n, 1:L_n} \right)^T \qquad\qquad \qquad \notag \\
  & \, = \,  \frac{1}{L_n} \cdot \Xi^{(m)}_{\alpha, T_\kappa}\cdot H_{\alpha, T_\kappa, L_n} \cdot \big( H_{\alpha, T_\kappa, L_n} \big)^T. \label{eqn:wtLambda_HHt}
\end{align}
This matrix will be used  in Section~\ref{sect:proof_unif_Lip_final} to approximate $K^{-1}_n\Xi^{(m)}_{\alpha, T_\kappa} F^{(m)}_{\alpha, T_\kappa} \Lambda_{K_n, P, T_\kappa} \big(F^{(m)}_{\alpha, T_\kappa} \big)^T$ in the proof of Theorem~\ref{thm:uniform_Lip}.
  \item [(c)] Note that $F^{(m)}_{\alpha, T_\kappa}$ is the identity matrix when $\alpha$ is the empty set; see the comments below (\ref{eqn:def_F}). This observation, along with the result in (a),
        shows that if $\alpha$ is the empty set, then
      $\big( X_{m, T_\kappa, L_n}\big)_{1:T_n, 1:L_n}  = H_{\alpha, T_\kappa, L_n}$. Moreover,
      it follows from (\ref{eqn:wtLambda_HHt})
    that when $\alpha$ is the empty set,
   $
   \wt \Lambda_{T_\kappa, K_n, L_n}=\frac{K_n}{L_n} H_{\alpha, T_\kappa, L_n} \big( H_{\alpha, T_\kappa, L_n} \big)^T$. These results will be used in Proposition~\ref{prop:error_bd_Lamda}.
\end{itemize}

With the definition of $H_{\alpha, T_\kappa, L_n}$, we
establish a uniform error bound between a B-spline Gramian matrix and
$(L_n)^{-1} \cdot \Xi^{(m)}_{\alpha, T_\kappa} \cdot H_{\alpha, T_\kappa, L_n} \cdot \big( H_{\alpha, T_\kappa, L_n} \big)^T$. In light of (\ref{eqn:wtLambda_HHt}), this result is critical to obtaining a uniform bound of the $\ell_\infty$-norm of the matrix product $\big(\Xi'_\alpha F_\alpha \Lambda_{K_n,P, T_\kappa} \, F_\alpha^T \big)^{-1}$, a key step toward the uniform Lipschitz property. To this end, we first introduce a B-spline Gramian matrix.
Consider the $m$th order B-splines $\big\{ B^{V_{\alpha, T_\kappa}}_{m, j} \big\}^{|\ol\alpha|+m}_{j=1}$ corresponding to  the knot sequence $V_{\alpha, T_\kappa}$ defined in (\ref{eqn:V_knot_seq}) associated with any index set $\alpha$ and $T_\kappa \in \mathcal T_{K_n}$. Specifically, define the Gramian matrix $G_{\alpha, T_\kappa} \in \mathbb R^{q_\alpha\times q_\alpha}$ (where we recall $q_\alpha :=|\ol\alpha|+m$) as
\begin{equation} \label{eqn:Galp_matrix}
   \big[ G_{\alpha, T_\kappa} \, \big]_{i, \, j} \ := \  \frac{  \left \langle \, B^{V_{\alpha, T_\kappa}}_{m, i}, \ B^{V_{\alpha, T_\kappa}}_{m, j}  \right \rangle } {  \big\| B^{V_{\alpha, T_\kappa}}_{m, i} \big\|_{L_1} }, 
    \qquad \forall \ i,j \,= \, 1, \ldots, q_\alpha.
\end{equation}

\begin{proposition} \label{prop:Galpha_approximation}
  Let $(K_n)$ be an increasing sequence with $K_n \rightarrow \infty$ as $n\rightarrow \infty$,
  and $(L_n)$ be of Property $\bf H$ defined by $(J_n)$ and $(M_n)$. Let $G_{\alpha, T_\kappa}$ and $H_{\alpha, T_\kappa, L_n}$ be defined for $T_\kappa \in \Tcal_{K_n}$ and $\alpha$. 
  Then  there exists $n_*\in \mathbb N$, which depends on $(L_n)$ only and is independent of $T_\kappa$ and $\alpha$, such that for  any $T_\kappa \in \Tcal_{K_n}$ with $n \ge n_*$ and any index set $\alpha$,
       \[
          \left \| G_{\alpha, T_\kappa} - \frac{1}{L_n} \cdot \Xi^{(m)}_{\alpha, T_\kappa} \cdot H_{\alpha, T_\kappa, L_n} \cdot \big( H_{\alpha, T_\kappa, L_n} \big)^T \right \|_\infty \, \le \, \frac{6 \cdot c_{\kappa, 1} \cdot (3\cdot 2^{m-1}-2)}{J_n}, \qquad \forall \ n \ge n_*,
       \]
       where $\Xi^{(m)}_{\alpha, T_\kappa}$ is defined in (\ref{eqn:Xi_matrix}).
\end{proposition}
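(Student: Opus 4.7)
My plan is to reduce the matrix difference to an entry-wise application of Lemma~\ref{lem:funct_bound}, then sum over columns. Setting $B_i := B_{m,i}^{V_{\alpha,T_\kappa}}$, $\xi_i := \tau_{\alpha,T_\kappa,i} - \tau_{\alpha,T_\kappa,i-m}$, and $H := H_{\alpha,T_\kappa,L_n}$, (\ref{eqn:L1_norm_Bspline}) gives $\|B_i\|_{L_1} = \xi_i/m$, so the $i$th diagonal entry of $\Xi^{(m)}_{\alpha,T_\kappa}$ is exactly $1/\|B_i\|_{L_1}$ and the target difference factors as
\[
  \bigl[G_{\alpha,T_\kappa}\bigr]_{i,j} - \Bigl[\tfrac{1}{L_n}\Xi^{(m)}_{\alpha,T_\kappa} H H^T\Bigr]_{i,j} \, = \, \frac{m}{\xi_i}\biggl(\int_0^1 B_i(x)B_j(x)\,dx - \frac{1}{L_n}\sum_{k=1}^{L_n}[H]_{i,k}[H]_{j,k}\biggr).
\]
Using (\ref{eqn:knot_derivative_bd}) on $V_{\alpha,T_\kappa}\subseteq T_\kappa$ (whose knot gaps inherit the lower bound $c_{\kappa,1}/K_n$ from $T_\kappa$) together with Property~$\mathbf H$, I will invoke the prefactor bound $m/\xi_i \le mK_n/c_{\kappa,1} \le M_n$ throughout.

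For $m\ge 2$, I apply Lemma~\ref{lem:funct_bound} with $f(x) = B_i(x)B_j(x)$, $v_k = [H]_{i,k}[H]_{j,k}$, $s_k = k/L_n$, and $\varrho = 1/L_n$. Let $\delta_n := 6(2^{m-1}-1)M_n^{m-1}/L_n$; by Property~$\mathbf H$ this equals $6(2^{m-1}-1)/(J_n M_n^2)$ and hence $\delta_n \to 0$, so I fix $n_*$ with $\delta_n\le 1$ for all $n\ge n_*$. Proposition~\ref{prop:Z_approx_Bspline} gives $|[H]_{i,k} - B_i((k-1)/L_n)|\le \delta_n$ uniformly, which, combined with $|B_i|\le 1$ and $|[H]_{i,k}|\le 1+\delta_n$, yields $|v_k - f(s_{k-1})| \le (1+\delta_n)\delta_n + \delta_n \le 3\delta_n =: \mu_1$. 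The derivative bound (\ref{eqn:Bspline_dev_bd}) applied to $V_{\alpha,T_\kappa}$ together with the product rule gives $|(B_iB_j)'| \le 4(m-1)K_n/c_{\kappa,1} =: \mu_2$ wherever defined. Lemma~\ref{lem:funct_bound} (with $a=0$, $b=1$) then bounds the inner discrepancy by $3\delta_n + 6(m-1)K_n/(c_{\kappa,1}L_n)$, and since $q_\alpha \le K_n+m$, the row sum is at most
\[
  (K_n+m)\,M_n\,\Bigl(3\delta_n + \tfrac{6(m-1)K_n}{c_{\kappa,1}L_n}\Bigr).
\]
Substituting $L_n = J_n M_n^{m+1}$ and $K_n \le c_{\kappa,1}M_n/m$ and simplifying, the dominant contribution becomes $18(2^{m-1}-1)c_{\kappa,1}/(mJ_n)$, plus lower-order terms scaling with $1/M_n$. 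A short arithmetic check verifies $18(2^{m-1}-1)/m \le 6(3\cdot 2^{m-1}-2)$ for every $m\ge 2$, so enlarging $n_*$ to absorb the lower-order terms produces the claimed bound.

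The case $m=1$ is handled directly and is simpler: the B-splines $B_{1,i}^{V_{\alpha,T_\kappa}}$ have pairwise disjoint supports, forcing $G_{\alpha,T_\kappa} = I_{q_\alpha}$ and making $HH^T$ diagonal with $[HH^T]_{i,i}$ equal to the integer count of grid points $(k-1)/L_n$ lying in $[\tau_{i-1},\tau_i)$; this count differs from $L_n\xi_i$ by at most~$1$, so the per-entry error after dividing by $L_n\xi_i$ is bounded by $1/(L_n\xi_i) \le K_n/(c_{\kappa,1}L_n) = 1/(J_n M_n)$, far below $6c_{\kappa,1}/J_n$ for $n$ large. The main obstacle is the tight constant-tracking in the row sum for $m\ge 2$: both $q_\alpha$ and $m/\xi_i$ scale like $M_n$, and the exponent $m+1$ in $L_n = J_n M_n^{m+1}$ from Property~$\mathbf H$ has been chosen precisely so that these two factors of $M_n$ cancel against the $M_n^{m-1}/L_n$ scale of $\delta_n$, leaving exactly the required $c_{\kappa,1}/J_n$ decay.
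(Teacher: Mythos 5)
Your proposal is correct and follows essentially the same route as the paper: compare each entry of $\frac{1}{L_n}\Xi^{(m)}_{\alpha,T_\kappa}HH^T$ with the corresponding Gramian entry via Proposition~\ref{prop:Z_approx_Bspline} and Lemma~\ref{lem:funct_bound}, bound the prefactor $m/\xi_i$ by $M_n$ and the number of columns $q_\alpha$ by a constant times $M_n$, and let Property $\mathbf H$ cancel the powers of $M_n$. One small caveat: for $m=2$ the second Lemma~\ref{lem:funct_bound} term is \emph{not} lower order --- it contributes $6(m-1)c_{\kappa,1}/(m^2 J_n)=1.5\,c_{\kappa,1}/J_n$, the same $1/J_n$ scale as your dominant term --- so it cannot be ``absorbed by enlarging $n_*$'', but since $9c_{\kappa,1}/J_n+1.5c_{\kappa,1}/J_n$ is still well below the claimed $24c_{\kappa,1}/J_n$, the conclusion survives by simply adding the two constants.
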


\begin{proof}
    Given arbitrary $\alpha$, $T_\kappa \in \Tcal_{K_n}$, and $(L_n)$ of Property $\bf H$, we use $H$ and $\Xi^{(m)}$ to denote $H_{\alpha, T_\kappa, L_n}$ and $\Xi^{(m)}_{\alpha, T_\kappa}$ respectively to simplify notation. Also take $r_k:=\frac{k-1}{L_n}$ for $k=1, \ldots, L_n$. When $m=1$, $G_{\alpha, T_\kappa}$ is diagonal (see the summary of B-splines at the beginning of Section~\ref{sec:prob_form} for the reason), and so is $H\cdot H^T$. Using  $\big( \|  B^{V_{\alpha, T_\kappa}}_{m, j} \|_{L_1}\big)^{-1} = \frac{m}{\kappa_{i_j}- \kappa_{{i_{j-m} } } } = \big[\Xi^{(m)}\big]_{j,j}$ and a  result similar to (\ref{eqn:bound_for_p=1}), the desired result follows easily.
    We consider $m \ge 2$ next.
    It follows from the definition of $H\in \mathbb R^{q_\alpha\times L_n}$ and Proposition~\ref{prop:Z_approx_Bspline} that for each $j=1, \ldots, q_\alpha$ and $k=1, \ldots, L_n$,
    \begin{equation} \label{eqn:H_error_bd}
       \left | \big[ H \big]_{j, k} -  B^{V_{\alpha, T_\kappa}}_{m, j}( r_{k}) \right|
        =  \left | \big[ Z_{m, \alpha, T_\kappa, L_n} \big]_{j, k} -  B^{V_{\alpha, T_\kappa}}_{m, j}( r_{k}) \right|
         \le \frac{C_m (M_n)^{m-1}}{L_n} \le \frac{C_m}{J_n  (M_n)^2} 
    \end{equation}
    for any $n \in \mathbb N$,
    where $C_m := 6\cdot (2^{m-1}-1)$. Since $0\le B^{V_{\alpha, T_\kappa}}_{m, j}(x)\le 1, \ \forall \, x \in [0, 1]$ for each $j$ and
    \begin{align*}
     \lefteqn{  \big[ H \big]_{j, \ell} \cdot \big[ H \big]_{k, \ell} -  B^{V_{\alpha, T_\kappa}}_{m, j}( r_{\ell}) \cdot B^{V_{\alpha, T_\kappa}}_{m, k}( r_{\ell} ) } \\
     & = \left( \big[ H \big]_{j, \ell}- B^{V_{\alpha, T_\kappa}}_{m, j}( r_{\ell} ) \right) \cdot \left( \big[ H \big]_{k, \ell}- B^{V_{\alpha, T_\kappa}}_{m, k}( r_{\ell}) \right) + B^{V_{\alpha, T_\kappa}}_{m, j}( r_{\ell}) \cdot \left( \big[ H \big]_{k, \ell} - B^{V_{\alpha, T_\kappa}}_{m, k}( r_{\ell}) \right) \\
     & \qquad + B^{V_{\alpha, T_\kappa}}_{m, k}( r_{\ell}) \cdot \left( \big[ H \big]_{j, \ell} - B^{V_{\alpha, T_\kappa}}_{m, j}( r_{\ell}) \right),
    \end{align*}
    we deduce, via (\ref{eqn:H_error_bd}), that for each $j,k=1, \ldots, q_\alpha$ and $\ell=1, \ldots, L_n$,
    \[
      \left| \big[ H \big]_{j, \ell} \cdot \big[ H \big]_{k, \ell} -  B^{V_{\alpha, T_\kappa}}_{m, j}( r_{\ell}) \cdot B^{V_{\alpha, T_\kappa}}_{m, k}( r_{\ell} ) \right| \, \le \, \left( \frac{C_m}{J_n (M_n)^2} \right)^2 + \frac{2 C_m}{J_n  (M_n)^2}, \quad \forall \ n\in \mathbb N.
    \]
     Since $m \ge 2$, $\big( B^{V_{\alpha, T_\kappa}}_{m, j}(x) B^{V_{\alpha, T_\kappa}}_{m, k}(x) \big)$ is continuous and differentiable on $[0, 1]$ except at (at most) finitely many points in $[0, 1]$.
     In light of (\ref{eqn:Bspline_dev_bd_2}) 
      we have, for any $x\in [0, 1]$ where the derivative exists,
     \begin{equation} \label{eqn:deriv_Bproduct}
        \left| \Big( B^{V_{\alpha, T_\kappa}}_{m, j}(x) B^{V_{\alpha, T_\kappa}}_{m, k}(x) \Big)' \right| \, = \, \left| \Big( B^{V_{\alpha, T_\kappa}}_{m, j}\Big)'(x) B^{V_{\alpha, T_\kappa}}_{m, k}(x) +
           B^{V_{\alpha, T_\kappa}}_{m, j}(x) \Big(B^{V_{\alpha, T_\kappa}}_{m, k}\Big)'(x)
         \right| \, \le \, 4 M_n.
     \end{equation}
     Combining these results with $\big( \|  B^{V_{\alpha, T_\kappa}}_{m, j} \|_{L_1}\big)^{-1} = \frac{m}{\kappa_{i_j}- \kappa_{{i_{j-m} } } } = \big[\Xi^{(m)}\big]_{j,j}$, we apply Lemma~\ref{lem:funct_bound} with $\wt n:=L_n$, $a:=0$, $b:=1$, $s_k:=k/L_n$, $\varrho:=1/L_n$, $f(x):= B^{V_{\alpha, T_\kappa}}_{m, j}(x)\cdot B^{V_{\alpha, T_\kappa}}_{m, k}(x)$, $v:=(v_\ell)=\big( [ H ]_{j, \ell} \cdot [ H ]_{k, \ell} \big)$, $\mu_1:=\left( \frac{C_m}{J_n (M_n)^2} \right)^2 + \frac{2 C_m}{J_n  (M_n)^2}$, and $\mu_2:=4 M_n$ to obtain $n_*\in \mathbb N$ depending on $(L_n)$ only such that for any $j, k=1, \ldots, q_{\alpha}$,
    \begin{align*}
      \lefteqn{ \left| \frac{1}{L_n} \big[  \Xi^{(m)} \cdot H \cdot H^T \big]_{j, \, k} -   \big[ G_{\alpha, T_\kappa} \, \big]_{j, \, k} \right| \, = \, \left| \frac{1}{L_n} \big[  \Xi^{(m)} \cdot H \cdot H^T \big]_{j, \, k} -   \frac{  \left \langle \, B^{V_{\alpha, T_\kappa}}_{m, j}, \ B^{V_{\alpha, T_\kappa}}_{m, k}  \right \rangle } {  \big\| B^{V_{\alpha, T_\kappa}}_{m, j} \big\|_{L_1} } \right| }\\
      & \, = \, \frac{m}{ \kappa_{i_j}- \kappa_{{i_{j-m}} } } \left|  \sum^{L_n}_{\ell=1} \frac{v_\ell}{L_n}  - \int^1_{0} f(t) dt \right|  \, \le \, M_n \cdot \left[ \Big( \frac{C_m}{J_n (M_n)^2} \Big)^2 + \frac{2 C_m}{J_n (M_n)^2} + \frac{6 M_n}{L_n} \right] \\
      & \, \le \, \frac{ 3 C_m + 6 }{J_n M_n}, \qquad \qquad \forall \ n \ge n_*,
    \end{align*}
    where we use $L_n = J_n \cdot (M_n)^{m+1}$. Since $G_{\alpha, T_\kappa}$ has $q_\alpha$ columns and $q_\alpha=|\ol\alpha|+m \le K_n+m -1 \le m \cdot K_n \le c_{\kappa, 1}  M_n$, it is deduced that for any $\alpha$ and  $T_\kappa \in \Tcal_{K_n}$,
    \[
       \left \| G_{\alpha, T_\kappa} - \frac{1}{L_n} \cdot \Xi^{(m)} \cdot H \cdot H^T \right \|_\infty \, \le \, q_\alpha \cdot \frac{ 3 C_m + 6 }{J_n M_n} \, \le \, \frac{c_{\kappa, 1} \cdot (3 C_m + 6)}{J_n}, \quad \forall \ n \ge n_*.
    \]
    The proof is completed by noting that 
    $3 C_m + 6 = 6 \cdot (3\cdot 2^{m-1}-2)$.
\end{proof}


An immediate consequence of Proposition~\ref{prop:Galpha_approximation} is the invertibility of $ \Xi^{(m)}_{\alpha, T_\kappa} H_{\alpha, T_\kappa, L_n}  \big( H_{\alpha, T_\kappa, L_n} \big)^T/L_n$ and the uniform bound of its inverse in the $\ell_\infty$-norm.

\begin{corollary} \label{coro:invert_HHt}
 Let $(K_n)$ be an increasing sequence with $K_n \rightarrow \infty$ as $n\rightarrow \infty$,
 and $(L_n)$ be of Property $\bf H$ defined by $(J_n)$ and $(M_n)$.
 Then there exists
 $n'_*\in \mathbb N$, which depends on $(L_n)$ only, such that for any $T_\kappa \in \Tcal_{K_n}$ with $n \ge n'_*$, any index set $\alpha$, and any $n \ge n'_*$,
 $\frac{1}{L_n} \Xi^{(m)}_{\alpha, T_\kappa} H_{\alpha, T_\kappa, L_n}  \big( H_{\alpha, T_\kappa, L_n} \big)^T$ is invertible, and
 $
  \big\| \big( \, \frac{1}{L_n} \Xi^{(m)}_{\alpha, T_\kappa} H_{\alpha, T_\kappa, L_n}  \big( H_{\alpha, T_\kappa, L_n} \big)^T \, \big)^{-1} \big\|_\infty \, \le \, \frac{3\rho_m}{2}, 
 $
 where $\rho_m$ is a positive constant depending on $m$ only.
\end{corollary}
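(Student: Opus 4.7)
The plan is to assemble three ingredients already established in the paper: Shardin's theorem (Theorem~\ref{thm:Shardin_bound}), the approximation bound of Proposition~\ref{prop:Galpha_approximation}, and the perturbation principle of Lemma~\ref{lem:matrix_approx}. The target matrix $\frac{1}{L_n} \Xi^{(m)}_{\alpha, T_\kappa} H_{\alpha, T_\kappa, L_n} ( H_{\alpha, T_\kappa, L_n} )^T$ is shown in Proposition~\ref{prop:Galpha_approximation} to approximate the Gramian $G_{\alpha, T_\kappa}$, which is exactly the type of Gramian to which Shardin's bound applies. So provided we can make the approximation error small enough relative to Shardin's uniform bound, perturbation will give the result with constant $\tfrac{3}{2}\rho_m$.

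In detail, first I would note that the rows and columns of $G_{\alpha, T_\kappa}$ are formed from the $m$th order B-splines on $[0,1]$ defined by the knot sequence $V_{\alpha, T_\kappa}$ of (\ref{eqn:V_knot_seq}). Thus $G_{\alpha, T_\kappa}$ falls under the hypotheses of Theorem~\ref{thm:Shardin_bound}, yielding a constant $\rho_m>0$ depending only on $m$ such that $\|G_{\alpha, T_\kappa}^{-1}\|_\infty \le \rho_m$ uniformly over all admissible choices of $\alpha$, $T_\kappa$, and $n$. Second, Proposition~\ref{prop:Galpha_approximation} provides an $n_* \in \mathbb N$ depending only on $(L_n)$ such that
\[
  \Bigl\| G_{\alpha, T_\kappa} - \tfrac{1}{L_n} \Xi^{(m)}_{\alpha, T_\kappa} H_{\alpha, T_\kappa, L_n} ( H_{\alpha, T_\kappa, L_n})^T \Bigr\|_\infty \, \le \, \frac{C_m}{J_n}, \qquad \forall \ n \ge n_*,
\]
where $C_m := 6 c_{\kappa,1}(3 \cdot 2^{m-1}-2)$, uniformly over $\alpha$ and $T_\kappa \in \Tcal_{K_n}$. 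Since $J_n \to \infty$ by Property $\mathbf{H}$, I can pick $n'_* \ge n_*$ large enough that $C_m/J_n < 1/(3\rho_m)$ for every $n \ge n'_*$.

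Third, I would invoke Lemma~\ref{lem:matrix_approx} with $A_i := G_{\alpha, T_\kappa}$ and $B_i := \frac{1}{L_n} \Xi^{(m)}_{\alpha, T_\kappa} H_{\alpha, T_\kappa, L_n}(H_{\alpha, T_\kappa, L_n})^T$, indexed by triples $(n, \alpha, T_\kappa)$ with $n \ge n'_*$. The uniform bound $\|A_i^{-1}\|_\infty \le \rho_m$ gives $\mu = \rho_m$, and by the choice of $n'_*$ the norm difference $\|A_i - B_i\|_\infty$ is strictly smaller than $1/(3\mu)$ for every index, so the key computation in the proof of Lemma~\ref{lem:matrix_approx} (namely $\|A_i^{-1} C_i\|_\infty \le 1/3$ and the resulting Neumann-series bound) applies simultaneously to every such $i$. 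Consequently each $B_i$ is invertible with $\|B_i^{-1}\|_\infty \le \tfrac{3}{2}\rho_m$, which is precisely the conclusion of the corollary. There is no substantive obstacle here; the only nuance is that Lemma~\ref{lem:matrix_approx} is stated with a ``finitely many bad indices'' hypothesis, whereas I am using its stronger local content (uniform smallness of the perturbation) that is directly supplied by the uniform-in-$(\alpha,T_\kappa)$ error bound of Proposition~\ref{prop:Galpha_approximation}.
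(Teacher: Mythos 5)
Your proposal is correct and follows essentially the same route as the paper: Shardin's bound on $\|G_{\alpha,T_\kappa}^{-1}\|_\infty$, the uniform approximation error from Proposition~\ref{prop:Galpha_approximation}, and the perturbation argument of Lemma~\ref{lem:matrix_approx}. Your closing remark is apt --- the index set here is infinite for each $n$, so strictly one must invoke the uniform smallness of the perturbation (the content of the lemma's proof) rather than its literal ``finitely many bad indices'' statement, a point the paper glosses over but which you handle correctly.
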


\begin{proof}
   Choose arbitrary $\alpha$, $T_\kappa \in \Tcal_{K_n}$, and $(L_n)$ of Property $\bf H$. It follows from \cite[Theorem~I]{Shardin_AM01} (cf. Theorem~\ref{thm:Shardin_bound}) that the Gramian matrix $G_{\alpha, T_\kappa}$ is invertible and there exists a positive constant $\rho_m$ such that for any $T_\kappa \in \Tcal_{K_n}$ and any index set $\alpha$, $\|( G_{\alpha, T_\kappa})^{-1}\|_\infty \le \rho_m$.
   Furthermore,
   it follows from Proposition~\ref{prop:Galpha_approximation}  that for  any $T_\kappa \in \Tcal_{K_n}$ and any index set $\alpha$, $\| G_{\alpha, T_\kappa} - \frac{1}{L_n} \Xi^{(m)}_{\alpha, T_\kappa} H_{\alpha, T_\kappa, L_n}  \big( H_{\alpha, T_\kappa, L_n} \big)^T \|_\infty \le 6 c_{\kappa, 1} (3\cdot 2^{m-1}-2)/J_n, \forall \, n \ge n_*$. Since $J_n\rightarrow \infty$ as $n \rightarrow \infty$, we deduce from Lemma~\ref{lem:matrix_approx} that there exists $n'_*\in \mathbb N$ with $n'_* \ge n_*$ such that for any $T_\kappa \in \Tcal_{K_n}$ with $n \ge n'_*$, any index set $\alpha$, and any $n \ge n'_*$,
 $\frac{1}{L_n} \Xi^{(m)}_{\alpha, T_\kappa} H_{\alpha, T_\kappa, L_n}  \big( H_{\alpha, T_\kappa, L_n} \big)^T$ is invertible, and
 $
  \big\| \big( \frac{1}{L_n} \Xi^{(m)}_{\alpha, T_\kappa} H_{\alpha, T_\kappa, L_n}  \big( H_{\alpha, T_\kappa, L_n} \big)^T \big)^{-1} \big\|_\infty \, \le \, \frac{3}{2}\rho_m.
 $
\end{proof}

For any $K_n \in \mathbb N$ and $T_\kappa \in \Tcal_{K_n}$, let $\{ B^{T_\kappa}_{m, i} \}^{T_n}_{i=1}$ be the B-splines of order $m$ defined by the knot sequence $T_\kappa$, where we recall that $T_n := K_n+m-1$.
Note that $B^{T_\kappa}_{m, i}$ is equal to $B^{V_{\alpha, T_\kappa}}_{m, i}$ when $\alpha$ is the empty set. For the given $T_\kappa$, define the $T_n \times T_n$ matrix $\wh \Lambda_{K_n, T_\kappa}$ as
\begin{equation} \label{eqn:wh_Lambda_matrix}
     \big[ \, \wh \Lambda_{K_n, T_\kappa} \big]_{i, j} \, : = \, K_n \cdot \left\langle B^{T_\kappa}_{m, i}, \, B^{T_\kappa}_{m, j} \right\rangle, \qquad \forall \ i, j= 1, \ldots, T_n.
\end{equation}
Clearly, $\wh \Lambda_{K_n, T_\kappa}$ is positive definite and invertible.
The following result presents important properties of $\wh \Lambda_{K_n, T_\kappa}$. In particular, it shows via $\wh \Lambda_{K_n, T_\kappa}$ that $\wt \Lambda_{T_\kappa, K_n, L_n}$ approximates $\Lambda_{K_n,P, T_\kappa}$ with a uniform error bound, which is crucial to the proof of the uniform Lipschitz property. Note that the constant $\rho_m>0$ used below is given in \cite[Theorem~I]{Shardin_AM01} (cf. Theorem~\ref{thm:Shardin_bound}) and depends on $m$ only.

\begin{proposition} \label{prop:error_bd_Lamda}
  Let $(K_n)$ be an increasing sequence with $K_n \rightarrow \infty$ as $n\rightarrow \infty$,
  and $(L_n)$ be of Property $\bf H$ defined by $(J_n)$ and $(M_n)$. The following hold:
  \begin{itemize}
    \item [(1)] For any $K_n$ and $T_\kappa\in \Tcal_{K_n}$, $\| \big(\wh \Lambda_{K_n, T_\kappa}\big)^{-1} \|_\infty \le m \rho_m/c_{\kappa, 1}$;
    \item [(2)] There exists $n_*\in \mathbb N$, depending on $(L_n)$ only, such that for any $T_\kappa \in T_{K_n}$ with $n \ge n_*$,
        \[
            \left \| \wt \Lambda_{T_\kappa, K_n, L_n} - \wh \Lambda_{K_n, T_\kappa} \right\|_\infty \, \le \, \frac{6 \cdot c_{\kappa, 2}\cdot c_{\kappa, 1} \cdot (3\cdot 2^{m-1}-2)}{J_n},  \qquad \forall \ n \ge n_*;
        \]
     \item [(3)] For any $n$, $K_n$, $P\in \mathcal P_n$, and $T_\kappa\in \Tcal_{K_n}$,
\[
\left \|  \Lambda_{K_n, P, T_\kappa}- {\wh \Lambda}_{K_n, T_\kappa} \right\|_\infty \, \leq \, (2m-1)\left(
\frac{6 m^2 c_\omega c_{\kappa, 2} }{c_{\kappa, 1} } + 3 c_\omega  \right)\frac{K_n}{n}.
\]
  \end{itemize}
\end{proposition}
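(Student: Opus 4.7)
The plan rests on a clean factorization of $\wh{\Lambda}_{K_n,T_\kappa}$. Since $V_{\emptyset,T_\kappa}=T_\kappa$ (hence $B^{V_{\emptyset,T_\kappa}}_{m,i}=B^{T_\kappa}_{m,i}$), the Gramian $G_{\emptyset,T_\kappa}$ defined in (\ref{eqn:Galp_matrix}) satisfies $\wh{\Lambda}_{K_n,T_\kappa}=K_n\,D_{T_\kappa}\cdot G_{\emptyset,T_\kappa}$, where $D_{T_\kappa}:=\text{diag}\big(\|B^{T_\kappa}_{m,i}\|_{L_1}\big)$ has diagonal entries $(\kappa_i-\kappa_{i-m})/m$. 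Inspection of (\ref{eqn:Xi_matrix}) at $\alpha=\emptyset$ (where $\tau_{\emptyset,T_\kappa,k}=\kappa_k$) shows $D_{T_\kappa}=\big(\Xi^{(m)}_{\emptyset,T_\kappa}\big)^{-1}$. Both (1) and (2) pivot on this identity combined with results already established.

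For (1), inverting and applying submultiplicativity yields $\|\wh{\Lambda}_{K_n,T_\kappa}^{-1}\|_\infty=\|G_{\emptyset,T_\kappa}^{-1}\cdot K_n^{-1}D_{T_\kappa}^{-1}\|_\infty\le\rho_m\cdot m/c_{\kappa,1}$, where Theorem~\ref{thm:Shardin_bound} bounds the first factor and (\ref{eqn:knot_derivative_bd}) bounds the second. For (2), observation (c) just before Proposition~\ref{prop:Galpha_approximation} gives $\wt{\Lambda}_{T_\kappa,K_n,L_n}=K_nD_{T_\kappa}\cdot\frac{1}{L_n}\Xi^{(m)}_{\emptyset,T_\kappa}H_{\emptyset,T_\kappa,L_n}H_{\emptyset,T_\kappa,L_n}^T$ when $\alpha=\emptyset$; subtracting $\wh{\Lambda}_{K_n,T_\kappa}=K_nD_{T_\kappa}\cdot G_{\emptyset,T_\kappa}$, factoring out $K_nD_{T_\kappa}$, applying Proposition~\ref{prop:Galpha_approximation} at $\alpha=\emptyset$ to the remaining factor, and using $\|K_nD_{T_\kappa}\|_\infty\le c_{\kappa,2}$ from (\ref{eqn:knot_derivative_bd}) produces the stated bound.

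Statement (3) is the substantive one and will be handled via Lemma~\ref{lem:funct_bound}. Since $B^{T_\kappa}_{m,j}$ and $B^{T_\kappa}_{m,k}$ have overlapping supports only when $|j-k|\le m-1$, each row of $\Lambda_{K_n,P,T_\kappa}-\wh{\Lambda}_{K_n,T_\kappa}$ has at most $2m-1$ nonzero entries. For such $(j,k)$ let $f_{j,k}(x):=B^{T_\kappa}_{m,j}(x)B^{T_\kappa}_{m,k}(x)$; for $m\ge 2$ this is continuous on $[0,1]$, piecewise differentiable, supported on an interval $[a_{j,k},b_{j,k}]$ of length at most $m c_{\kappa,2}/K_n$, and vanishes at both endpoints $a_{j,k},b_{j,k}$ (since at each endpoint, one of the two B-spline factors starts or ends its support). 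The product rule together with (\ref{eqn:B_spline_derivative}) and (\ref{eqn:knot_derivative_bd}) yields a uniform estimate $|f_{j,k}'|\le c_m K_n/c_{\kappa,1}$ for a constant $c_m$ depending only on $m$, wherever the derivative exists. The case $m=1$ is handled directly, since both matrices are diagonal and the bound reduces to a straightforward Riemann-sum comparison for $\mathbf{I}_{[\kappa_{i-1},\kappa_i)}$.

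I would then apply Lemma~\ref{lem:funct_bound} on $[a_{j,k},b_{j,k}]$ with interior nodes the design points strictly inside, $s_0=a_{j,k}$, $s_{\wt n}=b_{j,k}$, $v_k=f_{j,k}(s_{k-1})$ so $\mu_1=0$, $\mu_2=c_mK_n/c_{\kappa,1}$, $\varrho=c_\omega/n$, and $(b-a)\le mc_{\kappa,2}/K_n$; this yields an ``interior'' error of order $c_m m c_\omega c_{\kappa,2}/(c_{\kappa,1}n)$. Because $f_{j,k}$ vanishes at both $a_{j,k}$ and $b_{j,k}$, the lemma's idealized sum differs from the actual design-point sum $\sum_{i=0}^n(x_{i+1}-x_i)f_{j,k}(x_i)$ only by a single boundary-straddling residual $(x_{i^*+1}-b_{j,k})f_{j,k}(x_{i^*})$ of size at most $c_\omega/n$, where $i^*$ is the largest index with $x_{i^*}\le b_{j,k}$. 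Multiplying the resulting estimate by $K_n$ and summing over the at most $2m-1$ nonzero entries per row produces the stated bound. The main obstacle is thus the careful endpoint bookkeeping in (3); since the target constants $6m^2$ and $3c_\omega$ carry generous slack, a crude derivative bound and loose boundary accounting will absorb comfortably.
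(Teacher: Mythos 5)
Your proposal is correct and follows essentially the same route as the paper: parts (1) and (2) use the identity $\wh\Lambda_{K_n,T_\kappa}=K_n\big(\Xi^{(m)}_{\emptyset,T_\kappa}\big)^{-1}G_{\emptyset,T_\kappa}$ together with Theorem~\ref{thm:Shardin_bound} and Proposition~\ref{prop:Galpha_approximation}, and part (3) uses Lemma~\ref{lem:funct_bound} on the product $B^{T_\kappa}_{m,i}B^{T_\kappa}_{m,j}$ with $\mu_1=0$, boundary residuals of order $c_\omega/n$, and the bandwidth-$(2m-1)$ structure, exactly as in the paper (which keeps three boundary terms instead of your one). One small caveat: your claim that $f_{j,k}$ vanishes at both endpoints of its support fails for the boundary splines (e.g., $B^{T_\kappa}_{m,1}(0)=1$), but since those endpoints are $0$ and $1$, which are always design points, and the slack in the constants is ample, this does not affect the argument.
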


\begin{proof}
 (1) For any $T_\kappa \in \Tcal_{K_n}$, it follows from (\ref{eqn:Galp_matrix}) and
(\ref{eqn:wh_Lambda_matrix})
 that when $\alpha$ is the empty set, $G_{\emptyset, T_\kappa} = K^{-1}_n \cdot \Xi^{(m)}_{\emptyset, T_\kappa} \cdot \wh \Lambda_{K_n, T_\kappa} $. Hence, in light of $\| \Xi^{(m)}_{\emptyset, T_\kappa} \|_\infty \le m K_n/c_{\kappa, 1}$  and $\|\big( G_{\emptyset, T_\kappa} \big)^{-1} \|_\infty\le \rho_m$ for any $T_\kappa \in \Tcal_{K_n}$, we have
 \[
    \left\| \big(\wh \Lambda_{K_n, T_\kappa}\big)^{-1} \right\|_\infty \, = \, K^{-1}_n \left\| \big( G_{\emptyset, T_\kappa} \big)^{-1} \cdot \Xi^{(m)}_{\emptyset, T_\kappa} \right\|_\infty \, \le \,
     K^{-1}_n \cdot \rho_m \cdot \frac{m K_n}{c_{\kappa, 1} } \, =\,  \frac{m \rho_m}{ c_{\kappa, 1} }, \quad \forall \ T_\kappa \in \Tcal_{K_n}.
 \]

 (2)
  Recall from the comment below (\ref{eqn:wtLambda_HHt}) that $\wt \Lambda_{T_\kappa, K_n, L_n}=\frac{K_n}{L_n} H_{\alpha, T_\kappa, L_n} \big( H_{\alpha, T_\kappa, L_n} \big)^T$ when $\alpha$ is the empty set.
   Also, noting from the proof of statement (1) that $\wh \Lambda_{K_n, T_\kappa} = K_n \cdot  \big(\Xi^{(m)}_{\emptyset, T_\kappa}\big)^{-1} \cdot G_{\emptyset, T_\kappa}$, we obtain via Proposition~\ref{prop:Galpha_approximation} that there exists $n_*\in \mathbb N$, depending on $(L_n)$ only, such that for any $T_\kappa \in T_{K_n}$ with $n \ge n_*$,
  \begin{align*}
    \big \| \wt \Lambda_{T_\kappa, K_n, L_n} - \wh \Lambda_{K_n, T_\kappa} \big\|_\infty & \,  = \, K_n \cdot \left\| \big(\Xi^{(m)}_{\emptyset, T_\kappa}\big)^{-1} \cdot \Big( \, \frac{1}{L_n} \Xi^{(m)}_{\emptyset, T_\kappa} H_{\emptyset, T_\kappa, L_n} \big( H_{\emptyset, T_\kappa, L_n} \big)^T - G_{\emptyset, T_\kappa} \Big) \right\|_\infty  \\
   & \, \le \, K_n \cdot \big\|\big(\Xi^{(m)}_{\emptyset, T_\kappa}\big)^{-1} \big\|_\infty \cdot \left\|  \frac{1}{L_n} \Xi^{(m)}_{\emptyset, T_\kappa} H_{\emptyset, T_\kappa, L_n} \big( H_{\emptyset, T_\kappa, L_n} \big)^T - G_{\emptyset, T_\kappa} \right\|_\infty \\
   & \, \le \, K_n \cdot \frac{c_{\kappa, 2} }{K_n} \cdot \frac{6 \cdot c_{\kappa, 1} \cdot (3\cdot 2^{m-1}-2)}{J_n} \\
   & \, = \, \frac{6 \cdot c_{\kappa, 2}\cdot c_{\kappa, 1} \cdot (3\cdot 2^{m-1}-2)}{J_n},  \qquad \forall \ n \ge n_*.
  \end{align*}

 (3) Consider an arbitrary knot sequence $T_\kappa\in \Tcal_{K_n}$ given by $T_\kappa=\{ 0=\kappa_0<\kappa_1 <\cdots < \kappa_{K_n-1} < \kappa_{K_n}=1\}$ with the usual extension. Furthermore, let $P\in \mathcal P_n$ be a design point sequence given by $P=\{ 0 = x_0 < x_1 < \dots < x_n = 1 \}$. Recall
  the design matrix $\wh X \in \mathbb R^{(n+1) \times T_n}$ with $[\wh X]_{\ell, i} = B^{T_\kappa}_{m, i}(x_\ell)$ for each $\ell$ and $i$, and $\Lambda_{K_n, P, T_\kappa} = K_n \cdot \wh X^T \Theta_n \wh X \in \mathbb R^{T_n\times T_n}$ with $\Theta_n=\mbox{diag}\big(x_1-x_0, x_2-x_1, \ldots, x_{n+1}-x_n \big)$.
  When $m=1$, both $\wh \Lambda_{K_n, T_\kappa}$ and $\Lambda_{K_n, P, T_\kappa}$ are diagonal matrices, and the desired bound follows easily from a similar argument as in Proposition~\ref{prop:Galpha_approximation}. Hence, it suffices to consider $m \ge 2$ below.
  For any fixed $i, j =1, \ldots, T_n$, we assume that there are $(\wt n_i+1)$ design points in $P$ on the support $[\kappa_{i-m}, \kappa_i]$ of $B^{T_\kappa}_{m, i}$. Specifically, there exists $r_i\in \mathbb Z_+$ such that: (i) $x_{r_i}, x_{r_i+1}, \ldots, x_{r_i + \wt n_i} \in [\kappa_{i-m}, \kappa_i]$; (ii) $x_{r_i}=0$ or $x_{r_i-1}< \kappa_{i-m}$; and (iii) $x_{r_i+\wt n_i}=1$ or $x_{r_i+\wt n_i+1}>\kappa_i$. Hence, letting $\omega_\ell:=x_{\ell} - x_{\ell-1}$ for $\ell=1, \ldots, n+1$, it follows from the definitions of $\Lambda_{K_n, P, T_\kappa}$ and $\wh\Lambda_{K_n, T_\kappa}$ that
  \begin{align*}
 \lefteqn{ \left| \Big[ \Lambda_{K_n, P, T_\kappa}- \wh \Lambda_{K_n, T_\kappa} \Big]_{i, \, j} \right| \, = \, K_n \left|\sum_{\ell=r_i}^{r_i+\wt n_i} \omega_\ell \cdot B^{T_\kappa}_{m, i}(x_\ell) \cdot B^{T_\kappa}_{m, j}(x_\ell) - \int_{\kappa_{i-m}}^{\kappa_i} B^{T_\kappa}_{m, i}(x) B^{T_\kappa}_{m, j}(x)\,dx \right| } \\
 &  \leq  K_n \left|\sum_{\ell=r_i}^{r_i+\wt n_i-1} \omega_\ell \cdot B^{T_\kappa}_{m, i}(x_\ell) \cdot B^{T_\kappa}_{m, j}(x_\ell) - \int_{x_{r_i}}^{x_{r_i+\wt n_i}} B^{T_\kappa}_{m, i}(x) B^{T_\kappa}_{m, j}(x)\,dx \right| + K_n \int^{x_{r_i}}_{\kappa_{i-m}} B^{T_\kappa}_{m, i}(x)B^{T_\kappa}_{m, j}(x)\,dx \\
 & \quad + K_n \int^{\kappa_i}_{x_{r_i+\wt n_i}} B^{T_\kappa}_{m, i}(x)B^{T_\kappa}_{m, j}(x)\,dx
 + K_n\cdot \omega_{r_i+\wt n_i} B^{T_\kappa}_{m, i}(x_{r_i+\wt n_i}) \cdot B^{T_\kappa}_{m, j}(x_{r_i+\wt n_i}) \\
& \, \leq \, K_n \left|\sum_{\ell=r_i+1}^{r_i+\wt n_i} \omega_{\ell-1} B^{T_\kappa}_{m, i}(x_{\ell-1})B^{T_\kappa}_{m, j}(x_{\ell-1}) - \int_{x_{r_i}}^{x_{r_i+\wt n_i}} B^{T_\kappa}_{m, i}(x)B^{T_\kappa}_{m, j}(x)\,dx \right| + 3c_\omega\frac{K_n}{n},
\end{align*}
using that for each $i$ and $x\in [0,1]$, $0\leq B^{T_\kappa}_{m, i}(x) \leq 1$ and $\max_\ell(x_{r_i}- \kappa_{i-m}, \kappa_i - x_{r_i+\wt n_i}, \omega_\ell) \le c_\omega/n$.

By the virtue of (\ref{eqn:Bspline_dev_bd}) and (\ref{eqn:deriv_Bproduct}), it is easy to verify that
$
        \left| \big( B^{T_\kappa}_{m, i}(x) B^{ T_\kappa}_{m, j}(x) \big)' \right| \, \le  \, \frac{4 m K_n}{c_{\kappa, 1} }
$
whenever the derivative exists on $[0, 1]$. We apply Lemma~\ref{lem:funct_bound} to the first term on the right hand side with $\wt n := \wt n_i$, $a:= x_{r_i}$, $b:=x_{r_i+\wt n_i}$,
  $s_k := x_{r_i+k}$, $\varrho:= c_\omega/n$, $f(x):= B^{T_\kappa}_{m, i}(x) B^{T_\kappa}_{m, j}(x)$, which is continuous on $[0, 1]$ and is differentiable except at (at most) finitely many points, $v=(v_\ell) \in \mathbb R^{\tilde n_i}$ given by $v_\ell := B^{T_\kappa}_{m, i}(x_{r_i+\ell-1})B^{T_\kappa}_{m, j}(x_{r_i+\ell-1})$,  $\mu_1:=0$, and $\mu_2 := 4 m K_n/c_{\kappa, 1}$,
and we obtain
\begin{align*}
\lefteqn{ K_n \left|\sum_{\ell=r_i+1}^{r_i+\wt n_i} \omega_{\ell-1} B^{T_\kappa}_{m, i}(x_{\ell-1})B^{T_\kappa}_{m, j}(x_{\ell-1}) - \int_{x_{r_i}}^{x_{r_i+\wt n_i}} B^{T_\kappa}_{m, i}(x)B^{T_\kappa}_{m, j}(x)\,dx \right| } \\
 & \, \leq \, K_n \cdot \frac{3}{2} \cdot \frac{4 m K_n}{c_{\kappa, 1} }  \cdot \frac{c_\omega}{n} \cdot \big( \kappa_i-\kappa_{i-m} \big)
 \, \leq \, \frac{ 6 m K^2_n c_\omega  }{ n c_{\kappa, 1} } \cdot \frac{m \, c_{\kappa, 2} }{K_n} \, = \, \frac{6 m^2 c_\omega c_{\kappa, 2} }{c_{\kappa, 1} } \cdot \frac{K_n}{n}. \qquad \qquad \qquad \qquad
 %
\end{align*}
Combining the above results yields $\big|[ \Lambda_{K_n, P, T_\kappa}- \wh \Lambda_{K_n, T_\kappa} ]_{i, \, j} \big | \le \big(  \frac{6 m^2 c_\omega c_{\kappa, 2} }{c_{\kappa, 1} } + 3 c_\omega \big) \frac{K_n}{n}$ for any $i, j=1, \ldots, T_n$.
 It is noted that $B^{T_\kappa}_{m, i}B^{T_\kappa}_{m, j}\equiv 0$ on $[0, 1]$ whenever $|i-j| \geq m$.  Hence both $\Lambda_{K_n, P, T_\kappa}$ and $\wh \Lambda_{K_n, T_\kappa}$ are banded matrices with bandwidth $m$. Thus, for any $n$, $K_n$, $P\in \mathcal P_n$, and $T_\kappa\in \Tcal_{K_n}$,
\[
\left \|  \Lambda_{K_n, P, T_\kappa}- \wh \Lambda_{K_n, T_\kappa} \right\|_\infty \, \leq \, (2m-1)\left(
\frac{6 m^2 c_\omega c_{\kappa, 2} }{c_{\kappa, 1} } + 3 c_\omega  \right)\frac{K_n}{n}.
\]
This completes the proof of statement (3).
\end{proof}

%
\subsection{Proof of Theorem~\ref{thm:uniform_Lip}} \label{sect:proof_unif_Lip_final}

In this subsection, we use the results established in the previous subsections to show the uniform Lipschitz property stated in Theorem~\ref{thm:uniform_Lip}. Fix the B-spline order $m\in \mathbb N$. Let the strictly increasing sequence $(K_n)$ be such that $K_n \rightarrow \infty$ and $K_n/n \rightarrow 0$ as $n \rightarrow \infty$. Consider the sequence $(L_n)$:
\[
   L_n \, := \, K_n \cdot \left( \left\lceil \frac{m K_n}{c_{\kappa, 1} }  \right\rceil \right)^{m+1}, \qquad \forall \ n \in \mathbb N.
\]
Clearly, $(L_n)$ satisfies Property $\bf H$ with $J_n:=K_n$ and $M_n := \lceil \frac{m K_n}{c_{\kappa, 1} } \rceil$, and depends on $(K_n)$ only.

For any $P \in \mathcal P_n$, $T_\kappa \in \Tcal_{K_n}$, and any index set $\alpha$ defined in (\ref{eqn:index_alpha}), recall that $q_\alpha=|\ol\alpha|+m$, $T_n=K_n+m-1$, and $\Lambda_{K_n, P, T_\kappa}\in \mathbb R^{T_n\times T_n}$ .
We then construct the following matrices based on the development in the past subsections: $F^{(m)}_{\alpha, T_\kappa} \in \mathbb R^{q_\alpha \times T_n}$ (cf. (\ref{eqn:def_F})), $\Xi^{(m)}_{\alpha, T_\kappa}\in \mathbb R^{q_\alpha\times q_\alpha}$ (cf. (\ref{eqn:Xi_matrix})), $X_{m, T_\kappa, L_n}\in \mathbb R^{T_n\times (L_n+m-1)}$ (cf. (\ref{eqn:def_Xmatrix})), $\wt\Lambda_{T_\kappa, K_n, L_n}\in \mathbb R^{T_n\times T_n}$ (cf. (\ref{eqn:wt_Lambda})),
and $ H_{\alpha, T_\kappa, L_n}\in \mathbb R^{q_\alpha\times L_n}$ (cf. (\ref{eqn:H_matrix})). In light of Proposition~\ref{prop:null_basis_new} and (\ref{eqn:piecewise_linear}), we have
$$
  \wh b^\alpha_{P, T_\kappa} (\bar y) \, = \,  \big( F^{(m)}_{\alpha, T_\kappa} \big)^T \cdot \ \big(F^{(m)}_{\alpha, T_\kappa}  \Lambda_{K_n,P, T_\kappa}  \big(F^{(m)}_{\alpha, T_\kappa}\big)^T \big)^{-1} \cdot
  F^{(m)}_{\alpha, T_\kappa}  \,\bar y.
$$
Note that
\begin{align} \label{eqn:upper_bd_uniformL}
 \lefteqn{ \left\| \Big( F^{(m)}_{\alpha, T_\kappa} \Big)^T  \cdot \left(F^{(m)}_{\alpha, T_\kappa} \cdot \Lambda_{K_n,P, T_\kappa} \, \big(F^{(m)}_{\alpha, T_\kappa}\big)^T \right)^{-1} \cdot
  F^{(m)}_{\alpha, T_\kappa} \right\|_\infty } \notag \\
  & \, \le \ \left\| \big( F^{(m)}_{\alpha, T_\kappa} \big)^T \right\|_\infty \cdot \left\| K_n \left(\Xi^{(m)}_{\alpha, T_\kappa} F^{(m)}_{\alpha, T_\kappa} \cdot \Lambda_{K_n,P, T_\kappa} \, \big(F^{(m)}_{\alpha, T_\kappa}\big)^T \right)^{-1} \right\|_\infty \cdot \left\|
  K^{-1}_n \Xi^{(m)}_{\alpha, T_\kappa} F^{(m)}_{\alpha, T_\kappa} \right\|_\infty.
\end{align}
By Corollary~\ref{coro:F_T_XiF_bounds}, we have the uniform bounds $\big\| \big( F^{(m)}_{\alpha, T_\kappa} \big)^T \big\|_\infty=1$ and $\big\|  K^{-1}_n \Xi^{(m)}_{\alpha, T_\kappa} F^{(m)}_{\alpha, T_\kappa} \big\|_\infty \le m/c_{\kappa, 1}$, regardless of $K_n, \alpha, T_\kappa \in \Tcal_{K_n}$.

In what follows, we develop a uniform bound for $\big\| K_n \big(\Xi^{(m)}_{\alpha, T_\kappa} F^{(m)}_{\alpha, T_\kappa} \cdot \Lambda_{K_n,P, T_\kappa} \cdot \big(F^{(m)}_{\alpha, T_\kappa}\big)^T \big)^{-1} \big\|_\infty$ on the right hand side of (\ref{eqn:upper_bd_uniformL}).
Recall from (\ref{eqn:wtLambda_HHt}) that
\[
 K^{-1}_n \Xi^{(m)}_{\alpha, T_\kappa} F^{(m)}_{\alpha, T_\kappa} \cdot \wt \Lambda_{T_\kappa, K_n,  L_n} \, \big(F^{(m)}_{\alpha, T_\kappa}\big)^T  \, = \,\frac{1}{L_n} \cdot \Xi^{(m)}_{\alpha, T_\kappa}\cdot H_{\alpha, T_\kappa, L_n} \cdot \big( H_{\alpha, T_\kappa, L_n} \big)^T.
\]
By Corollary~\ref{coro:invert_HHt}, we deduce the existence of
 $\wt n_{\star}\in \mathbb N$, which depends on $(K_n)$ only, such that for any $T_\kappa \in \Tcal_{K_n}$ with $n \ge \wt n_{\star}$, any index set $\alpha$, and any $n \ge \wt n_{\star}$,
 $K^{-1}_n \Xi^{(m)}_{\alpha, T_\kappa} F^{(m)}_{\alpha, T_\kappa} \cdot \wt \Lambda_{T_\kappa, K_n,  L_n} \cdot \big(F^{(m)}_{\alpha, T_\kappa}\big)^T$ is invertible and $\big\| K_n \big[ \Xi^{(m)}_{\alpha, T_\kappa} F^{(m)}_{\alpha, T_\kappa} \cdot \wt \Lambda_{T_\kappa, K_n,  L_n} \cdot \big(F^{(m)}_{\alpha, T_\kappa}\big)^T\big]^{-1} \big\|_\infty \le \frac{3\rho_m}{2}$. Moreover, noting that
 \begin{align*}
\lefteqn{  \left\| K^{-1}_n \Xi^{(m)}_{\alpha, T_\kappa} F^{(m)}_{\alpha, T_\kappa} \cdot \wt \Lambda_{T_\kappa, K_n,  L_n} \cdot \big(F^{(m)}_{\alpha, T_\kappa}\big)^T -
  K^{-1}_n \Xi^{(m)}_{\alpha, T_\kappa} F^{(m)}_{\alpha, T_\kappa} \cdot \Lambda_{K_n,P, T_\kappa} \cdot \big(F^{(m)}_{\alpha, T_\kappa}\big)^T \right\|_\infty } \\
  & \ \le \ \left \| K^{-1}_n \Xi^{(m)}_{\alpha, T_\kappa} F^{(m)}_{\alpha, T_\kappa} \right\|_\infty \cdot \left \| \wt \Lambda_{T_\kappa, K_n,  L_n} - \Lambda_{K_n,P, T_\kappa} \right\|_\infty \cdot \left \| \big(F^{(m)}_{\alpha, T_\kappa}\big)^T \right\|_\infty     \qquad \qquad \qquad \qquad 
 \end{align*}
and using Proposition~\ref{prop:error_bd_Lamda} as well as the uniform bounds for $\big\|
  \big( F^{(m)}_{\alpha, T_\kappa} \big)^T \big\|_\infty$ and $\big \| K^{-1}_n \Xi^{(m)}_{\alpha, T_\kappa} F^{(m)}_{\alpha, T_\kappa} \big\|_\infty $ in Corollary~\ref{coro:F_T_XiF_bounds}, we further deduce via Lemma~\ref{lem:matrix_approx} that there exists $n_* \in \mathbb N$ with $n_* \ge \wt n_\star$ such that for any $\alpha$,  $P \in \mathcal P_n$, and $T_\kappa \in \Tcal_{K_n}$ with $n \ge n_{*}$,
$K^{-1}_n \Xi^{(m)}_{\alpha, T_\kappa} F^{(m)}_{\alpha, T_\kappa} \cdot \Lambda_{K_n,P, T_\kappa} \cdot \big(F^{(m)}_{\alpha, T_\kappa}\big)^T$  is invertible and
\[
  \left\| \Big( K^{-1}_n \Xi^{(m)}_{\alpha, T_\kappa} F^{(m)}_{\alpha, T_\kappa} \cdot \Lambda_{K_n,P, T_\kappa} \cdot \big(F^{(m)}_{\alpha, T_\kappa}\big)^T \Big)^{-1} \right\|_\infty \, \le \, \frac{9\rho_m}{4}.
\]

Finally, combining the above three uniform bounds, we conclude, in light of (\ref{eqn:upper_bd_uniformL}), that the theorem holds with the positive constant $c_\infty:= 9 m \rho_m/(4 c_{\kappa, 1})$
depending on $m, c_{\kappa, 1}$ only, and $n_* \in \mathbb N$ depending on $(K_n)$ only (when $m, c_\omega, c_{\kappa, 1}, c_{\kappa, 2}$ are fixed).

%
\section{Applications to Shape Constrained Estimation} \label{sect:constrained_estimation}

We apply the uniform Lipschitz property of the constrained B-splines established in Theorem~\ref{thm:uniform_Lip} to the nonparametric estimation of shape constrained functions subject to nonnegative derivative constraints in a class of smooth functions, i.e., a H\"older class. Let $L$ and $r$ be positive constants and $m:=\lceil r \rceil \in \mathbb N$ so that $r\in (m-1, m]$.
We introduce the H\"older class
\begin{equation}\label{eqn:holder_class}
H_L^r \, := \, \Big \{f:[0,1] \rightarrow \mathbb R \ \Big | \ \big|f^{(m-1)}(x_1)- f^{(m-1)}(x_2) \big| \leq L|x_1-x_2|^{r-(m-1)}, \ \forall \, x_1, x_2 \in [0,1]  \Big\},
\end{equation}
where $r$ is the H\"older exponent, and $L$ is the H\"older constant. Also, define $\mathcal S_{m, H}(r, L):=\mathcal S_m \cap H_L^r$.
Given a sequence of design points $(x_i)^n_{i=0}$ on $[0, 1]$, consider the following regression problem:
\begin{equation}\label{eqn:model}
   y_i \, = \, f(x_i) + \sigma \varepsilon_i, \quad i = 0,1,\dots,n,
\end{equation}
where $f$ is an unknown true function in $\mathcal S_{m, H}(r, L)$, the $\varepsilon_i$'s are independent  standard normal errors, $\sigma$ is a positive constant, and the $y_i$'s are samples.
The goal of shape constrained estimation is to construct an estimator $\wh f$ that preserves the specified shape of the true function characterized by  $\Scal_m$.
 In the asymptotic analysis of such an estimator, we are particularly interested in its uniform convergence on the entire interval $[0, 1]$ and the convergence rate of $\sup_{f\in \mathcal S_{m, H}(r, L)}\mathbb E(\| \wh f - f\|_\infty)$ as the sample size $n$ is sufficiently large, where $\| \cdot \|_\infty$ denotes the supremum norm of a function on $[0, 1]$. With the help of the uniform Lipschitz property, we show that for general nonnegative derivative constraints (i.e., $m \in \mathbb N$ is arbitrary), the constrained B-spline estimator (\ref{eqn:Bspline_estimator}) achieves uniform convergence on $[0, 1]$ for possibly non-equally spaced design points, and we provide a preliminary convergence rate. These results pave the way for further study of the optimal convergence of shape restricted estimators subject to general nonnegative derivative constraints.

We discuss the asymptotic performance of the constrained B-spline estimator (\ref{eqn:Bspline_estimator}) as follows.
Consider the set $\mathcal P_n$ in (\ref{eqn:set_of_Pn}) for a given $c_\omega \ge 1$. For each $n\in \mathbb N$, let $P := (x_i)^n_{i=1} \in \Pcal_n$ be a sequence of design points on $[0, 1]$. Let $(K_n)$ be an increasing sequence of natural numbers. For simplicity, we consider equally spaced B-spline knots for each $K_n$, i.e., $c_{\kappa_1,1} = c_{\kappa,2}=1$ for $\mathcal T_{K_n}$ defined in (\ref{eqn:set_of_T_K_n}) such that $T_\kappa = (i/K_n)^{K_n}_{i=0}$ for each $K_n$.
Let $y = (y_i)^n_{i=0} \in \mathbb R^{n+1}$ be the sample data vector given in (\ref{eqn:model}). For a given $m$ and a true function $f \in \mathcal S_{m, H}(r, L)$, the constrained B-spline estimator, denoted by $\wh f^B_{P, K_n}$, is given in (\ref{eqn:Bspline_estimator}), and its B-spline coefficient vector $\wh b_{P, K_n}$ is defined in (\ref{eqn:opt_spline_coeff_matrix}). (We replace the subscript $T_\kappa$ in (\ref{eqn:Bspline_estimator}) and (\ref{eqn:opt_spline_coeff_matrix}) by $K_n$ since we are considering equally spaced knots.)
Moreover, we introduce the vector of noise-free data $\vec f := (f(x_0),f(x_1),\dots,f(x_n))^T \in \mathbb R^{n+1}$ and define $\ol f_{P, K_n}:= \sum_{k=1}^{T_n}\ol b_k B_{m,k}^{T_\kappa}$, where $T_n:=K_n+m-1$ and $\ol b_{P, K_n} = (\ol b_k) \in \mathbb R^{T_n}$ is the B-spline coefficient vector characterized by the optimization problem in (\ref{eqn:opt_spline_coeff_matrix}) with $\ol y$ replaced by $K_n \wh X^T \Theta_n \vec f$:
\begin{equation} \label{eqn:def_ol_b}
\ol b_{P, K_n} \, := \, \argmin_{\wt D_{m,T_\kappa} \, b \geq 0} \frac{1}{2} \, b^T \, \Lambda_{K_n, P, T_{\kappa}} \, b - b^T \big( K_n \wh X^T \Theta_n \vec f \ \big).
\end{equation}

Note that for each true $f$, $\mathbb E(\|\wh f^B_{P, K_n} - f\|_\infty) \leq \| f - \ol f_{P, K_n}\|_\infty + \mathbb E(\|\ol f_{P, K_n} - \wh f^B_{P, K_n}\|_\infty)$, where $\| f - \ol f_{P, K_n}\|_\infty$ pertains to the estimator bias and $\mathbb E(\|\ol f_{P, K_n} - \wh f^B_{P, K_n}\|_\infty)$ corresponds to the stochastic error. We develop uniform  bounds for these two terms in the succeeding propositions via the uniform Lipschitz property. For notational simplicity, define  $\gamma:= r-(m-1)$ for $\mathcal S_{m, H}(r, L)$.

\begin{proposition}\label{prop:bias}
Fix $m \in \mathbb N$, and constants $c_\omega \ge 1$, $L>0$, and $r \in (m-1, m]$. Let $(K_n)$ be an increasing sequence  of natural numbers with $K_n \rightarrow \infty$ and $K_n/n \rightarrow 0$ as $n \rightarrow \infty$. Then there exist a positive constant $C_b$ and $\wh n_1 \in \mathbb N$ depending on $(K_n)$ only such that
\begin{equation*} 
     \sup_{f \in \Scal_{m, H}(r,L), \, P \in \mathcal P_n} \left \| \, f- \ol f_{P, K_n}  \, \right\|_\infty \, \le \, C_b \cdot \big(K_n)^{-\gamma}, \qquad \quad \forall \ n \ge \wh n_1.
\end{equation*}
\end{proposition}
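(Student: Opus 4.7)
The strategy is to sandwich $\ol f_{P,K_n}$ between $f$ and a purpose-built shape-preserving $m$th order B-spline approximant of $f$, using Theorem~\ref{thm:uniform_Lip} as the bridge. Specifically, I would first construct, for each $f\in\mathcal S_{m,H}(r,L)$, a coefficient vector $b^\star\in\mathbb R^{T_n}$ such that $g^\star:=\sum_{k=1}^{T_n} b^\star_k\,B^{T_\kappa}_{m,k}$ lies in $\mathcal S_m$ (equivalently $\wt D_{m,T_\kappa}\,b^\star\ge 0$ by Lemma~\ref{lem:shape_ineq}) and satisfies $\|f-g^\star\|_\infty\le C_0\,(K_n)^{-\gamma}$ for a constant $C_0$ depending only on $m, r, L$. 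Since $f^{(m-1)}$ is nondecreasing (from $f\in\mathcal S_m$) and $\gamma$-H\"older (from $f\in H_L^r$), a natural choice is the $m$th order spline whose $(m-1)$st derivative is the nondecreasing step function $h$ with $h\equiv f^{(m-1)}(\kappa_{k-1})$ on $[\kappa_{k-1},\kappa_k)$ (so $\|f^{(m-1)}-h\|_\infty\le L(K_n)^{-\gamma}$), with a lower-degree polynomial part tuned to match $f$ at the boundary; by Lemma~\ref{lem:shape_ineq}(1) the monotonicity of the step values corresponds to $\wt D_{m,T_\kappa}\,b^\star\ge 0$, and $(m-1)$ successive integrations carry the $(K_n)^{-\gamma}$ bound on $\|f^{(m-1)}-h\|_\infty$ into the desired sup-norm bound on $f-g^\star$.

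Next, since $b^\star$ is feasible, plugging the ``noise-free'' right-hand side $\Lambda_{K_n,P,T_\kappa}\,b^\star$ into (\ref{eqn:opt_spline_coeff_matrix}) makes $b^\star$ the unconstrained (hence constrained) minimizer, so $\wh b_{P,T_\kappa}(\Lambda_{K_n,P,T_\kappa}\,b^\star)=b^\star$. Applying the uniform Lipschitz property (Theorem~\ref{thm:uniform_Lip}) then yields, for all $n$ larger than some $\wh n_1$ depending on $(K_n)$ only and uniformly in $P\in\mathcal P_n$,
\[
\big\|\,\ol b_{P,K_n}-b^\star\,\big\|_\infty \;=\; \big\|\,\wh b_{P,T_\kappa}(\ol y_f)-\wh b_{P,T_\kappa}(\Lambda_{K_n,P,T_\kappa}\,b^\star)\,\big\|_\infty \;\le\; c_\infty\,\big\|\,\ol y_f-\Lambda_{K_n,P,T_\kappa}\,b^\star\,\big\|_\infty,
\]
with $\ol y_f:=K_n\wh X^T\Theta_n\vec f$. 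Using $\Lambda_{K_n,P,T_\kappa}\,b^\star=K_n\wh X^T\Theta_n\wh X b^\star$ and $(\vec f-\wh X b^\star)_i=f(x_i)-g^\star(x_i)$, the right side is bounded by $K_n\|\wh X^T\Theta_n\|_\infty\cdot\|f-g^\star\|_\infty$. A direct count using the $m/K_n$-length support of each $B^{T_\kappa}_{m,k}$, the bound $B^{T_\kappa}_{m,k}\le 1$, and $P\in\mathcal P_n$ gives $\|\wh X^T\Theta_n\|_\infty=O(1/K_n)$, which together with $K_n/n\to 0$ produces $\|\ol y_f-\Lambda_{K_n,P,T_\kappa}\,b^\star\|_\infty=O((K_n)^{-\gamma})$ uniformly in $f$ and $P$.

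Finally, the partition of unity $\sum_k B^{T_\kappa}_{m,k}(x)\equiv 1$ implies $\|\ol f_{P,K_n}-g^\star\|_\infty\le\|\ol b_{P,K_n}-b^\star\|_\infty$; combining this with $\|f-g^\star\|_\infty\le C_0(K_n)^{-\gamma}$ and the triangle inequality yields the claim, with $C_b$ expressible as a constant multiple of $C_0$ involving $m, c_\omega$, and $c_\infty$ from Theorem~\ref{thm:uniform_Lip}. The main obstacle is Step~1: producing a shape-preserving B-spline approximant attaining the \emph{optimal} rate $(K_n)^{-\gamma}$ for arbitrary $m$. The cases $m=1$ (monotone) and $m=2$ (convex) are classical; for general $m$, the delicate point is to turn the monotonicity of $f^{(m-1)}$ together with its $\gamma$-H\"older regularity into nonnegativity of the weighted $m$th differences $\wt D_{m,T_\kappa}\,b^\star$, while preserving the sup-norm rate through the $(m-1)$ integrations and taking proper care of the boundary knot extensions. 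Subject to having such a $g^\star$, all remaining estimates are uniform in $f$ (through $C_0$) and in $P$ (through Theorem~\ref{thm:uniform_Lip} and standard B-spline identities).
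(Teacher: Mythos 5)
Your proposal is correct and follows essentially the same route as the paper's proof: the paper's intermediate spline $\wt f$ is exactly your $g^\star$ (piecewise-constant $(m-1)$st derivative sampled at the left knot endpoints, integrated $(m-1)$ times with the Taylor polynomial of $f$ at $0$ as boundary data), feasibility plus exact interpolation of the design values makes $\wt b$ the constrained minimizer for the data $K_n\wh X^T\Theta_n\vec{\tilde f}=\Lambda_{K_n,P,T_\kappa}\wt b$, and the uniform Lipschitz property combined with the bound $\|K_n\wh X^T\Theta_n\|_\infty\le m+1$ for large $n$ closes the argument just as you describe. The "delicate point" you flag does go through exactly as you sketch it: monotonicity of the step function $g$ gives $\wt f\in\mathcal S_m$ and hence $\wt D_{m,T_\kappa}\wt b\ge 0$ via Lemma~\ref{lem:shape_ineq}, and the H\"older bound $\|f^{(m-1)}-g\|_\infty\le L K_n^{-\gamma}$ survives the $(m-1)$ integrations with only a factor $1/(m-1)!$.
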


\begin{proof}
Fix an arbitrary true function $f \in \Scal_{m, H}(r,L)$. For any given $P\in \mathcal P_n$ and $K_n$, we write $\ol f_{P, K_n}$ as $\ol f$ to simplify notation.
Define the piecewise constant function $g:[0,1] \rightarrow \mathbb R$ as $g(x):= f^{(m-1)}((i-1)/K_n), \ \forall \, x \in [(i-1)/K_n, i/K_n)$ for each $i=1, \ldots, K_n$, and $g(1):= f^{(m-1)}((K_n-1)/K_n)$. Then define the function $\wt f :[0,1]\rightarrow \mathbb R$ as (a) when $m=1$,  $\wt f := g$; (b) when $m \ge 2$,
\[
\wt f(x) \, := \, \sum_{k=0}^{m-2} \frac{f^{(k)}(0)}{k!} x^k +\int_0^x \int_0^{t_1} \dots \int_0^{t_{m-2}} g(t_{m-1}) \,dt_{m-1} \cdots dt_2 \,dt_1, \quad x \in [0, 1].
\]
Clearly, $\wt f^{(m-1)} = g$ almost everywhere on $[0, 1]$. Since $g$ is given by some first-order B-splines for the knot sequence $T_\kappa=(i/K_n)^{K_n}_{i=0}$ and is increasing on $[0, 1]$, it can be shown via induction on $m$ that $\wt f$ is equal to $\sum_{k=1}^{T_n} \wt b_k B_{m,k}^{T_\kappa}$ for some $\wt b := ( \wt b_k ) \in \mathbb R^{T_n} $. By  Lemma~\ref{lem:shape_ineq} and $f \in \mathcal S_m$, we have $
\wt f \in \Scal_m$  and $\wt D_{m, T_\kappa} \wt b \geq 0$.

In view of $\| f - \ol f\|_\infty \le \| f - \wt f\|_\infty + \| \wt f - \ol f\|_\infty$, we establish a uniform bound for each term on the right-hand side below.

(i) Uniform bound of $\| f - \wt f\|_\infty$. In light of the definitions of the piecewise constant function $g$ and the H\"older class $H^r_L$, we see that $| g(x) - f^{(m-1)}(x)| \le L \cdot (K_n)^{-\gamma}, \forall \ x \in [(i-1)/K_n, i/K_n)$ for each $i=1, \ldots, K_n$. Hence, when $m=1$, $\| f - \wt f\|_\infty \le L \cdot (K_n)^{-\gamma}$. When $m \ge 2$, we have that 
\begin{align*}
  \big | f(x) - \wt f(x) \big| & \, \leq \, \int_0^x \int_0^{t_1} \cdots \int_0^{t_{m-2}} \left | f^{(m-1)}(t_{m-1}) - g(t_{m-1}) \right | \, dt_{m-1} \cdots dt_2 \, dt_1 \\
		     & \, \leq \, \int_0^x \int_0^{t_1} \cdots \int_0^{t_{m-2}} L \cdot K_n^{-\gamma} \, dt_{m-1} \cdots dt_2 \, dt_1  \, \le \,  \frac{L}{(m-1)!}K_n^{-\gamma}, \quad \forall \ x \in [0,1].
\end{align*}
Hence, $\|f - \wt f\|_\infty \leq \frac{L}{(m-1)!} \cdot K_n^{-\gamma}$ for any $f\in \mathcal S_{m, H}(r, L)$.

(ii) Uniform bound of $\| \wt f - \ol f\|_\infty$. We introduce the vector $\vec{\tilde f} := (\wt f(x_0),\wt f(x_1),\ldots, \wt f(x_n))^T \in \mathbb R^{n+1}$. Since $\wt f = \sum_{k=1}^{T_n} \wt b_k B_{m,k}^{T_\kappa}$, we have $\wh X \wt b  = \vec{\tilde f}$, where $\wh X$ is the design matrix corresponding to the design point sequence $P$ and the B-splines $\{ B^{T_\kappa}_{m, k} \}^{T_n}_{k=1}$ (cf. Section~\ref{subsec:shape_const_Bsplines}). This shows that $\frac{1}{2}\|K_n^{1/2}\Theta_n^{1/2}(\wh X \wt b-\vec{\tilde f})\|^2_2 =0$. Moreover, since $\wt D_{m, T_\kappa} \wt b \ge 0$, we deduce that
\[
\wt b \, = \, \argmin_{\wt D_{m, T_\kappa} \, b \geq 0} \frac{1}{2} b^T \, \Lambda_{K_n, P, T_\kappa} \, b - b^T \, \Big( K_n\wh X^T \Theta_n \vec{ \tilde f} \ \Big).
\]
By using the definition of $\ol f$ before (\ref{eqn:def_ol_b}) and the uniform Lipschitz property in Theorem~\ref{thm:uniform_Lip}, we obtain a positive constant $c_\infty$ depending only on $m$ and a natural number $n_*$ depending only on $(K_n)$, $m$, and $c_\omega$ such that for any $P \in \mathcal P_n$ and $K_n$ with $n \ge n_*$,
\begin{align}
 \big \| \ol f-\wt f \big \|_\infty & \leq  \, \big \|\ol b_{P, K_n} - \wt b \big \|_\infty \, \leq \, c_\infty \cdot \left \|K_n \wh X^T \Theta_n \vec f -   K_n\wh X^T \Theta_n \vec{ \tilde f} \right \|_\infty   \, \leq \, c_\infty \cdot \big \|K_n\wh X^T \Theta_n  \big \|_\infty  \cdot \Big \|\vec f - \vec{ \tilde f} \, \Big \|_\infty \nonumber\\
                          &  \,  \leq \, c_\infty \cdot \big \|K_n \wh X^T \Theta_n \big \|_\infty  \cdot \frac{ L}{(m-1)!}K_n^{-\gamma},  \label{eqn:bias_bound_1}
\end{align}
where we use $\big \|\vec f - \vec{ \tilde f} \, \big \|_\infty \le \| f - \wt f\|_\infty$  and the uniform bound for $\| f - \wt f\|_\infty$ established in (i).
We further show that $\big \|K_n \wh X^T \Theta_n \big \|_\infty$ attains a uniform bound independent of $P\in \mathcal P_n$ and $K_n$ as long as $n$ is large enough. Let $\kappa_i := i/K_n$ for $i=1, \ldots, K_n$ (with the usual extension). It follows from the definition of $\wh X$ and the non-negativity, upper bound, and support of the $B_{m,k}^{T_\kappa}$'s (given at the beginning of Section~\ref{sec:prob_form}) that for each $k = 1,\dots, T_n$,
\begin{eqnarray*}
   \left \|(K_n\wh X^T \Theta_n)_{k\bullet} \right \|_\infty & = & K_n \sum_{i=0}^n B_{m,k}^{T_\kappa}(x_i) \cdot (x_{i+1} - x_i) \, \le \, K_n  \sum_{i=0}^n \mathbf I_{[\kappa_{k-m}, \, \kappa_k]}(x_i) \cdot (x_{i+1} - x_i)  \\
   & \le & K_n \left( \kappa_k-\kappa_{k-m} +\frac{ c_\omega}{n}\right) \, \le \, m +\frac{ c_\omega K_n}{n},
\end{eqnarray*}
where the term $ c_\omega/n$ comes from the fact that  $\kappa_k$ is  in the interval $[x_r, x_{r+1})$ for  some design points $x_r, x_{r+1}$.
Since $K_n/n \rightarrow 0$ as $n \rightarrow \infty$, we obtain $\bar n_1 \in \mathbb N$ (depending only on $(K_n)$) such that for any $n \ge \bar n_1$, $\big \|(K_n\wh X^T \Theta_n)_{k\bullet} \big\|_\infty \le m+1$ for each $k = 1,\dots, T_n$ so that $\big \|K_n \wh X^T \Theta_n \big \|_\infty \le m+1$ for any $P \in \mathcal P_n$ and $K_n$. Combining this with~(\ref{eqn:bias_bound_1}) yields that for any $n \ge \max(n_*, \bar n_1)$, $\|\ol f-\wt f\|_\infty \leq \frac{ c_\infty L (m+1)}{(m-1)!}K_n^{-\gamma}$ for any $f \in \mathcal S_{m, H}(r, L)$, $P \in \mathcal P_n$ and $K_n$.

Utilizing (i) and (ii) and letting $\wh n_1 :=\max(n_*, \bar n_1)$ (depending only on $(K_n)$),
 we conclude that for any $n \ge \wh n_1$, $\sup_{f \in \Scal_{m, H}(r,L), \, P \in \mathcal P_n}  \| \, f- \ol f  \, \|_\infty \, \le \, C_b \cdot K_n^{-\gamma}$, where
 $C_b := \frac{ [c_\infty  (m+1)+1] L}{(m-1)!}$.
\end{proof}

The next proposition gives a uniform bound of the stochastic error of the estimator $\wh f^B_{P, K_n}$.

\begin{proposition}\label{prop:stoch_error}
Fix $m \in \mathbb N$, and constants $c_\omega \ge 1$, $L>0$, $r \in (m-1, m]$, and $q \ge 1$. Let $(K_n)$ be an increasing sequence  of natural numbers with $K_n \rightarrow \infty$, $K_n/n \rightarrow 0$, $K_n/(n^{1/q}\cdot\sqrt{\log n}) \rightarrow 0$ as $n \rightarrow \infty$. Then there exist a positive constant $C_s$ and $\wh n_2 \in \mathbb N$ depending on $(K_n)$ only such that
\begin{equation*} 
\sup_{f \in \Scal_{m, H}(r,L), \, P \in \mathcal P_n} \mathbb E\left( \, \left \|\wh f^B_{P, K_n}-\ol f_{P, K_n} \right \|_\infty  \, \right) \, \le \, C_s \cdot \sqrt{ \frac{K_n \log n}{n} }, \qquad \forall \ n \ge \wh n_2.
\end{equation*}
\end{proposition}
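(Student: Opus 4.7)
The plan is to reduce the sup-norm error on the spline to an $\ell_\infty$ error on its coefficient vector via partition of unity, invoke the uniform Lipschitz property (Theorem~\ref{thm:uniform_Lip}) to transfer that to the $\ell_\infty$ error on the weighted noise vector $\sigma K_n \wh X^T \Theta_n \varepsilon$, and finally apply the standard Gaussian maximal inequality.

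For the first reduction, the non-negativity of the B-splines and their partition of unity on $[0,1]$ give, for every $c\in\mathbb R^{T_n}$ and $x\in[0,1]$,
\[
\Big|\sum_{k=1}^{T_n}c_k\,B^{T_\kappa}_{m,k}(x)\Big|\,\leq\,\|c\|_\infty\sum_{k=1}^{T_n}B^{T_\kappa}_{m,k}(x)\,=\,\|c\|_\infty,
\]
so that $\|\wh f^B_{P,K_n}-\ol f_{P,K_n}\|_\infty \leq \|\wh b_{P,K_n}-\ol b_{P,K_n}\|_\infty$. Both $\wh b_{P,K_n}$ and $\ol b_{P,K_n}$ are values of the single piecewise-linear map $\wh b_{P,T_\kappa}(\cdot)$ defined in (\ref{eqn:opt_spline_coeff_matrix}), evaluated at the inputs $\bar y$ and $K_n \wh X^T \Theta_n \vec f$ respectively; since $y=\vec f+\sigma\varepsilon$, the difference of these inputs is $\sigma K_n\wh X^T\Theta_n\varepsilon$ with $\varepsilon:=(\varepsilon_0,\ldots,\varepsilon_n)^T$. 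Because the knots are equally spaced ($c_{\kappa,1}=c_{\kappa,2}=1$) and $K_n/n\to 0$, Theorem~\ref{thm:uniform_Lip} applies for all large $n$ and yields
\[
\big\|\wh b_{P,K_n}-\ol b_{P,K_n}\big\|_\infty \,\leq\, c_\infty\,\sigma\,\|K_n\wh X^T\Theta_n\varepsilon\|_\infty.
\]

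Each coordinate $\xi_k:=(K_n\wh X^T\Theta_n\varepsilon)_k$ is a centered Gaussian with variance $K_n^2\sum_i[B^{T_\kappa}_{m,k}(x_i)]^2(x_{i+1}-x_i)^2$. Using $0\leq B^{T_\kappa}_{m,k}\leq 1$, $x_{i+1}-x_i\leq c_\omega/n$, and the already-derived estimate $K_n\sum_i B^{T_\kappa}_{m,k}(x_i)(x_{i+1}-x_i)\leq m+1$ (valid for all large $n$; see the end of the proof of Proposition~\ref{prop:bias}), I obtain $\mathrm{Var}(\xi_k)\leq c_\omega(m+1)K_n/n$ uniformly in $k$ and in $P\in\mathcal P_n$. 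The standard Gaussian maximal inequality $\mathbb E[\max_{k\leq d}|Z_k|]\leq \sqrt{2\log(2d)}\cdot\max_k\sqrt{\mathrm{Var}(Z_k)}$, combined with $T_n=O(K_n)=O(n)$ and hence $\log T_n=O(\log n)$, then gives
\[
\mathbb E\,\|K_n\wh X^T\Theta_n\varepsilon\|_\infty \,\leq\, \sqrt{2\log(2T_n)}\cdot\sqrt{c_\omega(m+1)K_n/n} \,=\, O\Big(\sqrt{K_n\log n/n}\Big),
\]
and chaining the three inequalities yields the proposition with $C_s$ depending only on $m,c_\omega,\sigma$.

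No step here is the real difficulty --- Theorem~\ref{thm:uniform_Lip} has already absorbed the hard work. What requires care is the uniformity over $f\in\mathcal S_{m,H}(r,L)$ and $P\in\mathcal P_n$: the Lipschitz constant $c_\infty$ is independent of both by Theorem~\ref{thm:uniform_Lip}, the variance bound depends only on $m$ and $c_\omega$, and the Gaussian maximal inequality is deterministic, so the supremum comes for free. The slightly stronger growth restriction $K_n/(n^{1/q}\sqrt{\log n})\to 0$ enters only to keep the lower-order residual $c_\omega K_n/n$ (in $\|K_n\wh X^T\Theta_n\|_\infty\leq m+1$ and in analogous auxiliary estimates) negligible relative to the target rate $\sqrt{K_n\log n/n}$.
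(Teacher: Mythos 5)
Your proof is correct, and the skeleton is the same as the paper's: reduce $\|\wh f^B_{P,K_n}-\ol f_{P,K_n}\|_\infty$ to $\|\wh b_{P,K_n}-\ol b_{P,K_n}\|_\infty$ via nonnegativity and partition of unity, note that both coefficient vectors are values of the same piecewise linear map at inputs differing by $\sigma K_n \wh X^T\Theta_n\vec\varepsilon$, apply Theorem~\ref{thm:uniform_Lip}, and bound the per-coordinate variance by $c_\omega(m+1)K_n/n$ exactly as the paper does (reusing $\|(K_n\wh X^T\Theta_n)_{k\bullet}\|_\infty\le m+1$ from Proposition~\ref{prop:bias}). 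The one genuine divergence is the last step: the paper integrates a union-bound tail estimate, splitting $\int_0^\infty \mathbb P(\cdot\ge t)\,dt$ at the threshold $W_n:=\wt c\,\grave\sigma\sqrt{2K_n\log n/(qn)}$ and bounding the remaining Gaussian tail, which leaves a residual term of order $T_n\sqrt{K_n/n}\cdot n^{-1/q}$; the hypothesis $K_n/(n^{1/q}\sqrt{\log n})\to 0$ is there precisely to make \emph{that} residual negligible against $\sqrt{K_n\log n/n}$. You instead invoke the Gaussian maximal inequality $\mathbb E\max_k|Z_k|\le\sqrt{2\log(2d)}\,\max_k\sqrt{\mathrm{Var}(Z_k)}$ directly, which produces no residual, so in your argument the condition involving $q$ is never actually used (only $K_n/n\to 0$, which gives both the Lipschitz property and $\log T_n=O(\log n)$). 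Your closing remark attributing the $q$-condition to controlling ``the lower-order residual $c_\omega K_n/n$'' in $\|K_n\wh X^T\Theta_n\|_\infty\le m+1$ is therefore off the mark --- that bound needs only $K_n/n\to 0$ --- but this is a misstatement about an unused hypothesis, not a gap: your route proves the proposition under strictly weaker assumptions, which is fine.
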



\begin{proof}
Fix a true function $f \in \mathcal S_{m, H}(r, L)$. Given arbitrary $P \in \mathcal P_n$ and $K_n$,  let $\vec \varepsilon := (\varepsilon_i) \in \mathbb R^{n+1}$, $\omega_i:=x_{i+1} - x_i$ for $i=0, 1, \ldots, n$,  and define $\xi_k := \big(\sqrt{n K_n} \, \wh X^T \Theta_n \big)_{k\bullet} \, \vec \varepsilon = \sqrt{n K_n} \sum_{i=0}^n \omega_i B_{m,k}^{T_\kappa}(x_i)\varepsilon_i$ for each $k = 1,\dots, T_n$.
Since each $\varepsilon_i \sim \mathcal N(0,1)$ and $\varepsilon_i$'s are independent, we have $\xi_k \sim \mathcal N(0,\bar \sigma_k^2)$, where $\bar \sigma_k^2 = \|(\sqrt{n K_n} \, \wh X^T \Theta_n)_{k\bullet}\|_2^2 \geq 0$.
By the proof of item (ii) of Proposition~\ref{prop:bias}, there exists $\bar n_1 \in \mathbb N$ (depending only on $(K_n)$) such that if $n \ge \bar n_1$, then for any $P \in \mathcal P_n$ and $K_n$, $\|(K_n \wh X^T \Theta_n)_{k\bullet}\|_\infty \leq m+1$ for each $k = 1,\ldots, T_n$. Therefore, by using $n \cdot \big(B_{m,k}^{T_\kappa}(x_i) \, \omega_i \big)^2 \le  n \omega_i \cdot \big (B_{m,k}^{T_\kappa}(x_i) \, \omega_i \big)  \le c_\omega \cdot \big (B_{m,k}^{T_\kappa}(x_i) \, \omega_i \big)$ for each $i$, we have, for any $P \in \mathcal P_n$ and $K_n$ with $n \ge \bar n_1$,
\begin{align*}
%
%
 \left \|\big(\sqrt{n K_n} \, \wh X^T \Theta_n \big)_{k\bullet} \right\|_2^2 & =   \sum_{i=0}^n K_n \cdot n \left(B_{m,k}^{T_\kappa}(x_i) \, \omega_i \right)^2 \leq c_\omega \sum_{i=0}^n K_n B_{m,k}^{T_\kappa}(x_i) \, \omega_i = c_\omega  \big\|(K_n \wh X^T \Theta_n)_{k\bullet} \big\|_\infty \\
                & \leq c_\omega (m+1), \qquad \qquad \qquad  \qquad \forall \ k=1, \ldots, T_n.
\end{align*}
This shows that  $\bar \sigma_k^2 \leq  c_\omega(m+1)$ for each $k = 1,\dots, T_n$. Letting $\xi :=(\xi_1, \ldots, \xi_{T_n})^T \in \mathbb R^{T_n}$, we see that $\sigma  \xi = \sqrt{n K_n} \wh X^T \Theta_n \sigma \vec\varepsilon = \sqrt{n K_n} \wh X^T \Theta_n (y - \vec f)$. Hence,
we deduce, via the uniform Lipschitz property of Theorem~\ref{thm:uniform_Lip}, that there exist a positive constant $c_\infty$ and $n_* \in \mathbb N$ (depending on $(K_n)$ only) such that for any $f \in \Scal_{m, H}(r, L)$, any $P \in \mathcal P_n$ and $K_n$ with $n \ge n_*$,
\begin{align}
 \left \|\wh f^B_{P, K_n} - \ol f_{P, K_n} \right \|_\infty & \, \leq \, \big \|\wh b_{P, K_n} - \bar b_{P, K_n} \big\|_\infty \leq c_\infty \cdot \sqrt{K_n / n} \cdot \left\|\sqrt{n K_n} \wh X^T \Theta_n \big( y -  \vec f \, \big) \right\|_\infty \notag \\
  & \, = \, c_\infty \cdot \sqrt{K_n / n} \cdot \sigma \cdot \| \xi \|_\infty
     \, = \, \wt c \cdot \sqrt{K_n/n} \cdot \bar \xi, \label{eqn:whf_olf_bound}
\end{align}
where $\bar \xi := \| \xi \|_\infty= \max_{k=1, \ldots, T_n} |\xi_k|$ and $\wt c :=   c_\infty \sigma>0$.

Define $\grave\sigma := \sqrt{c_\omega(m+1)}$, and consider the random variable $Z_\xi \sim \mathcal N(0,\grave \sigma^2)$. Since the variance of $\xi_k$ satisfies $\bar \sigma^2_k \le c_\omega(m+1)=\grave\sigma^2$ for each $k$, we have, for  any $u \geq 0$,
\[
\mathbb P\left(|Z_\xi| \geq \frac{u}{\wt c}\sqrt\frac{n}{K_n} \, \right) \, \geq \, \mathbb P\left(|\xi_k| \geq \frac{u}{\wt c}\sqrt\frac{n}{K_n} \, \right), \qquad \forall \ k = 1,\ldots,T_n.
\]
Recall $\wh n_1:=\max(\bar n_1, n_*)$ as in the proof of Proposition~\ref{prop:bias}.
By using (\ref{eqn:whf_olf_bound}), the above inequality, and the implication: $X \sim \mathcal N(0,\sigma^2) \implies \mathbb P(|X| \geq v) \leq \exp\big(-\frac{v^2}{2\sigma^2}\big)$ for any $v \ge 0$, we have that for any $P \in \mathcal P_n$ and $K_n$ with $n \ge \wh n_1$ and any $u \ge 0$,
\begin{align*}
 \mathbb P\left(\|\wh f^B_{P, K_n} - \ol f_{P, K_n}\|_\infty \ge u\right) & \, \leq \,  \mathbb P\left( \bar{\xi} \geq \frac{u}{\wt c}\sqrt\frac{n}{K_n}\right) \, \leq \, T_n \cdot \mathbb P\left( |Z_\xi| \geq  \frac{u}{\wt c}\sqrt\frac{n}{K_n}\right) \\
  & \, \leq \, T_n\cdot \exp\left(-\frac{u^2n}{2 \, \grave \sigma^2 \, \wt c^2 \,K_n}\right).
\end{align*}
Let $W_n := \wt c \, \grave \sigma \sqrt{\frac{2 K_n\log n}{q \cdot n}}$ for the fixed $q\ge 1$.  It follows from the above result and
 $\int^\infty_v e^{-t^2/(2\sigma^2)} dt \le e^{-v^2/(2\sigma^2)} \sigma \sqrt{\pi/2}$ for any $v \ge 0$ that for any $f \in \Scal_{m, H}(r,L)$ and any $P \in \mathcal P_n$ and $K_n$ with $n \ge \wh n_1$,
\begin{align*}
\lefteqn{ \mathbb E \left( \big\|\wh f^B_{P, K_n} - \ol f_{P, K_n} \big\|_\infty \right)   \, = \,
   \int^\infty_0  \mathbb P \left( \big\|\wh f^B_{P, K_n} - \ol f_{P, K_n} \big\|_\infty \ge t \right) dt } \\
& \, \leq \, W_n+ \int_{W_n}^\infty \mathbb P\left(  \big\|\wh f^B_{P, K_n} - \ol f_{P, K_n} \big\|_\infty \ge t \right)\,dt
    \,
						       \leq \, W_n + T_n \cdot \int_{W_n}^\infty  \exp\left(-\frac{n \,t^2}{2 \, \wt c^2 \,\grave \sigma^2K_n}\right)\,dt\\
& \, \le \,   W_n + T_n \wt c \, \grave \sigma \sqrt\frac{\pi K_n}{ 2 n} \exp\left(-\frac{ W_n^2 n}{2\wt c^2 \grave \sigma^2 K_n}\right) \, = \, W_n + \wt c \, \grave \sigma \cdot \sqrt\frac{\pi K_n}{ 2 n} \cdot (K_n+m-1) \cdot n^{-\frac{1}{q} }.
\end{align*}
Since $K_n/(n^{1/q}\cdot\sqrt{\log n}) \rightarrow 0$ as $n \rightarrow \infty$, there exist a constant $C_s>0$ and $\wh n_2 \in \mathbb N$ with $\wh n_2 \ge \wh n_1$ (depending on $K_n$ only) such that for any $f\in \mathcal S_{m, H}(r, L)$, any $P \in \mathcal P_n$ and $K_n$ with $n \ge \wh n_2$,
\[
   \mathbb E \left( \big\|\wh f^B_{P, K_n} - \ol f_{P, K_n} \big\|_\infty \right) \le C_s \cdot \sqrt{\frac{K_n\log n}{n}}.
\]
This leads to the desired uniform bound for the stochastic error.
\end{proof}

Combining the results of Propositions~\ref{prop:bias} and~\ref{prop:stoch_error}, we obtain the following theorem.

\begin{theorem} \label{thm:total_bound}
 Fix $m \in \mathbb N$, and constants $c_\omega \ge 1$, $L>0$, $r \in (m-1, m]$, and $q > 1$. Let $(K_n)$ be a sequence  of natural numbers with $K_n \rightarrow \infty$, $K_n/n \rightarrow 0$, $K_n/(n^{1/q}\cdot\sqrt{\log n}) \rightarrow 0$ as $n \rightarrow \infty$. Then there exist two positive constants $C_b, C_s$ and $\wh n_\star \in \mathbb N$ depending on $(K_n)$ only such that
 \[
   \sup_{f \in \Scal_{m, H}(r,L), \, P \in \mathcal P_n} \mathbb E\left( \, \left \|f - \wh f^B_{P, K_n} \right \|_\infty  \, \right) \, \le \, C_b \cdot \big(K_n)^{-\gamma} + C_s \cdot \sqrt{ \frac{K_n \log n}{n} }, \qquad \forall \ n \ge \wh n_\star.
 \]
\end{theorem}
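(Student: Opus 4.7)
The plan is to prove Theorem~\ref{thm:total_bound} as a direct corollary of Propositions~\ref{prop:bias} and~\ref{prop:stoch_error} via the triangle inequality. For any true function $f \in \mathcal S_{m, H}(r, L)$, any design point sequence $P \in \mathcal P_n$, and any $K_n$, the B-spline estimator $\wh f^B_{P, K_n}$ and the noise-free B-spline approximant $\ol f_{P, K_n}$ (defined in~(\ref{eqn:def_ol_b})) together yield the decomposition
\[
  \mathbb E \left( \, \big \| f - \wh f^B_{P, K_n} \big \|_\infty \right) \, \le \, \big \| f - \ol f_{P, K_n} \big \|_\infty \, + \, \mathbb E \left( \, \big \|  \ol f_{P, K_n} - \wh f^B_{P, K_n} \big \|_\infty \right),
\]
where the first term on the right-hand side is the (deterministic) bias and the second term is the expected stochastic error.

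Next I would invoke Proposition~\ref{prop:bias} under the hypotheses $K_n \rightarrow \infty$ and $K_n/n \rightarrow 0$ to obtain a positive constant $C_b$ and a natural number $\wh n_1$ depending on $(K_n)$ only such that $\sup_{f,P} \| f - \ol f_{P, K_n}\|_\infty \le C_b \cdot K_n^{-\gamma}$ for all $n \ge \wh n_1$. Then I would invoke Proposition~\ref{prop:stoch_error} under the hypotheses $K_n \rightarrow \infty$, $K_n/n \rightarrow 0$, and $K_n/(n^{1/q} \sqrt{\log n}) \rightarrow 0$ (which are precisely the hypotheses assumed on $(K_n)$ in the theorem) to obtain a positive constant $C_s$ and a natural number $\wh n_2$ depending on $(K_n)$ only such that $\sup_{f,P} \mathbb E (\| \ol f_{P, K_n} - \wh f^B_{P, K_n}\|_\infty) \le C_s \cdot \sqrt{K_n \log n/n}$ for all $n \ge \wh n_2$.

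Taking $\wh n_\star := \max(\wh n_1, \wh n_2)$, which also depends on $(K_n)$ only since both $\wh n_1$ and $\wh n_2$ do, and taking the supremum over $f\in \mathcal S_{m, H}(r, L)$ and $P\in \mathcal P_n$ on both sides of the above inequality combines these two uniform bounds into the claimed estimate. There is no real obstacle here since both component bounds have been established in the previous two propositions and they hold uniformly over the same parameter set $\{(f,P) : f \in \mathcal S_{m, H}(r, L), P \in \mathcal P_n\}$. The only minor care is to confirm that the hypothesis $q > 1$ in the theorem implies the hypothesis $q \ge 1$ required by Proposition~\ref{prop:stoch_error}, which is immediate. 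The proof is then complete.
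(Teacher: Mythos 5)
Your proposal is correct and is exactly the argument the paper intends: the theorem is stated immediately after the sentence ``Combining the results of Propositions~\ref{prop:bias} and~\ref{prop:stoch_error}, we obtain the following theorem,'' i.e., the triangle-inequality decomposition into bias and stochastic error followed by the two uniform bounds, with $\wh n_\star := \max(\wh n_1, \wh n_2)$. Nothing is missing.
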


A specific choice of $(K_n)$ that satisfies the conditions in Theorem~\ref{thm:total_bound} is $K_n = \left\lceil \big( \frac{ n}{\log n} \big)^{1/q} \right\rceil$. This result demonstrates the uniform convergence of the constrained B-spline estimator $\wh f^B_{P, K_n}$ to the true function $f$ on the entire interval $[0, 1]$, and the consistency of this B-spline estimator, including the consistency at the two boundary points, even if design points are not equally spaced. Note that the monotone and convex least-squares estimators are inconsistent at the boundary points due to non-negligible asymptotic bias \cite{Groeneboom_AoS01, Pal_SPL08, woodroofe_SSinica93}, which is known as the spiking problem.

\begin{remark} \rm
 In addition to the uniform convergence,  Theorem~\ref{thm:total_bound} also gives an asymptotic convergence rate of the constrained B-spline estimator subject to general nonnegative derivative constraints. The obtained rate is not optimal yet due to the conservative rate for the bias (cf. Proposition~\ref{prop:bias}). The optimal bias rates have been established for $m=1$ (i.e., the monotone constraint) and $m=2$ (i.e., the convex constraint) in the literature. However, the bias analysis under higher order nonnegative derivative constraints is complex and requires substantial technical developments, which are beyond the scope of this paper and shall be reported in the future.
\end{remark}

%
\section{Concluding Remarks} \label{sect:conclusion}

This paper establishes a critical uniform Lipschitz property for constrained B-splines
 subject to general nonnegative derivative constraints with possibly non-equally spaced design points and/or knots; important asymptotic analysis results are obtained via this property.
Future research topics include optimal rates of convergence of the B-spline
estimator under higher-order derivative constraints, and
 constrained estimation subject to more general shape constraints, e.g.,
those given by multiple derivative constraints or a linear combination of derivatives of different orders.
%

%
%

\begin{thebibliography}{99}


{\small



\bibitem{Balabdaoui_SN10}
{\sc F. Balabdaoui and J.A. Wellner}. Estimation of a $k$-monotone density: characterizations, consistency and minimax lower bounds.
{\sl Statistica Neerlandica}, Vol. {64}(1), pp. 45--70, 2010.

\bibitem{BirkeDette_SJoS06}
{\sc M. Birke and J. Dette}. Estimating a convex function in nonparametric regression.
{\sl Scandinavian Journal of Statistics}, Vol. {34}, pp. 384--404, 2007.

\bibitem{Cator_B11}
{\sc E. Cator}. Adaptivity and optimality of the monotone least squares estimator.
{\sl Bernoulli}, Vol. {17}(2), pp. 714--735, 2011.

%
\bibitem{CheeWang_CSDA14}
{\sc C.-S. Chee and Y. Wang}. Least squares estimation of a $k$-monotone density function.
{\sl Computational Statistics and Data Analysis}, Vol. 74, pp. 209--216, 2014.
%

\bibitem{deBoor_book01}
{\sc C. de Boor}.  {\sl A Practical Guide to Splines}. Springer, 2001.


\bibitem{DeVoreL_book93}
{\sc R.A. DeVore and G.G. Lorentz}. {\sl Constructive Approximation}. Springer-Verlag, 1st
edition, 1993.



\bibitem{Dontchev_siopt02}
{\sc A.L. Dontchev, H.-D. Qi, L. Qi, and H. Yin}. A Newton method for shape-preserving spline interpolation.
{\sl SIAM Journal on Optimization}, Vol. 13(2), pp. 588--602, 2002.

\bibitem{Dontchev_constapprox03}
{\sc A.L. Dontchev, H.-D. Qi, and L. Qi}. Quadratic convergence of Newton's method for convex interpolation and smoothing. {\sl Constructive Approximation}, Vol. 19, pp. 123--143, 2003.

\bibitem{Dumbgen_MMS04}
{\sc L. D\"umbgen, S. Freitag, and G. Jongbloed}. Consistency of concave regression with an application to current-status data.
{\sl Mathematical Methods of Statistics}, Vol. {13}(1), pp. 69--81, 2004.

\bibitem{EgMartin_book10}
{\sc M. Egerstedt and C. Martin}. {\sl Control Theoretic Splines}.
Princeton University Press, 2010.

\bibitem{Golitschek_JoAT14}
{\sc M.V. Golitschek}. On the ${L}_\infty$-norm of the orthogonal projector onto splines: A short proof of {A}. {S}hardin's theorem.
{\sl Journal of Approximation Theory}, Vol. {181}, pp. 30--42, 2014.

\bibitem{Groeneboom_AoS01}
{\sc P. Groeneboom, F. Jongbloed, and J.A. Wellner}.
Estimation of a convex function: Characterizations and asymptotic theory. {\sl Annals of Statistics},  Vol. {29}(6), pp. 1653--1698, 2001.

%
%
%
%
%
%
%
%
%
\bibitem{Nemirovski_notes00}
{\sc A. Nemirovski}. {\sl Topics in Non-parametric Statistics}.  Lecture on Probability Theory and Statistics. Berlin, Germany: Springer-Verlag, Vol. 1738, Lecture Notes in Mathematics, 2000.
%

\bibitem{Pal_SPL08}
{\sc J.K. Pal}. Spiking problem in monotone regression: Penalized residual sum of squares.
{\sl Statistics and Probability Letters}, Vol. 78(12), pp. 1548--1556, 2008.

\bibitem{PalWoodroofe_SS07}
{\sc J.K. Pal and M. Woodroofe}. Large sample properties of shape restricted regression estimators with smoothness adjustments. {\sl Statistica Sinica}, Vol. {17}(4), pp. 1601--1616, 2007.



\bibitem{PappAlizadeh_JCGS14}
{\sc D. Papp, And F. Alizadeh}. Shape constrained estimations using nonnegative
splines. {\sl  Journal of Computational and Graphical Statistics}, Vol. 23(1), pp. 211--231, 2014.

%
%
%
%

\bibitem{Sanyal_CDC03}
{\sc A.K. Sanyal, M. Chellappa, J.L. Valk, J. Ahmed, J. Shen, D.S. Berstien}. Globally convergent adaptive tracking of spacecraft angular velocity with inertia identification and adaptive linearization. {\sl Proceedings of the 42nd IEEE Conference on Decision and Control}, pp. 2704--2709, Hawaii, 2003.

\bibitem{Scholtes_thesis94}
{\sc S.\ Scholtes}. Introduction to piecewise differentiable
equations. Habilitation thesis, Institut f\"ur Statistik und
Mathematische Wirtschaftstheorie, Universit\"at Karlsruhe, 1994.

\bibitem{Shardin_AM01}
{\sc A.Y. Shardin}. The ${L}_\infty$-norm of the ${L}_2$-spline projector is bounded independently of the knot sequence: A proof of de {B}oor's conjecture. {\sl Acta Mathematica}, Vol. 187(1), pp. 59--137, 2001.



\bibitem{ShenLebair_Auto15}
{\sc J. Shen and T.M. Lebair}. Shape restricted smoothing splines via constrained optimal control and nonsmooth Newton's methods. {\sl Automatica}, Vol. 53, pp. 216--224, 2015.

\bibitem{SWang_ACC10}
{\sc J. Shen and X. Wang}. Estimation of shape constrained
functions in dynamical systems and its application to genetic
networks. {\sl Proc. of American Control Conference}, pp.
5948--5953, 
 2010.

\bibitem{ShenWang_SICON11}
{\sc J. Shen and X. Wang}. Estimation of monotone functions via
$P$-splines: A constrained dynamical optimization approach. {\sl
SIAM Journal on Control and Optimization}, Vol. 49(2), pp. 646--671, 2011.

\bibitem{ShenWang_ACC12}
{\sc J. Shen and X. Wang}. Convex regression via penalized splines: a complementarity approach. {\sl Proc. of American Control Conference}, pp. 332--337, Montreal, Canada,  2012.

\bibitem{SunEgerMartin_TAC00}
{\sc S. Sun, M. Egerstedt, and C.F. Martin}. Control theoretic smoothing splines. {\sl IEEE Trans. on Automatic Control}, Vol. 45(12), pp. 2271--2279, 2000.


\bibitem{Tsybakov_book10}
{\sc A.B. Tsybakov}. {\sl Introduction to Nonparametric Estimation}. Springer, 2010.

\bibitem{WangShen_biometrika10}
{\sc X. Wang and J. Shen}. A class of grouped Brunk estimators and penalized spline estimators for monotone regression.  {\sl Biometrika}, Vol. 97(3), pp. 585--601, 2010.

\bibitem{WangShen_SICON13}
{\sc X. Wang and J. Shen}. Uniform convergence and rate adaptive estimation of convex  functions via constrained optimization. {\sl SIAM Journal on Control and  Optimization}, Vol. 51(4), pp. 2753--2787, 2013.


\bibitem{woodroofe_SSinica93}
{\sc M. Woodroofe and J. Sun}. A penalized maximum
likelihood estimate of $f(0_+)$ when $f$ is nonincreasing. {\sl
Statistica Sinica}, Vol. 3, pp. 501--515, 1993.







%
%

}

\end{thebibliography}
%

\end{document}